\documentclass[12pt]{amsart}
\usepackage{amssymb,amscd,amsmath,amsthm,url}
\usepackage[all]{xy}
\usepackage{fullpage}
\usepackage{enumitem}
\usepackage{tikz-cd}
\usepackage{appendix}
\usepackage{xcolor}

% Load hyperref only once with the right options
\usepackage[colorlinks=true,
            citecolor=blue,  % citations blue
            linkcolor=black, % section refs, equations black
            urlcolor=blue]{hyperref}

% Make TOC entries red
\hypersetup{linktoc=all} % color whole TOC entry
\makeatletter
\AtBeginDocument{%
  \renewcommand*{\@linkcolor}{red}%
}
\makeatother

\newtheorem{Theorem}{Theorem}[section]
\newtheorem{Proposition}{Proposition}[section]
\theoremstyle{remark}
\newtheorem{Lemma}{Lemma}[section]

\newtheorem{Definition}{Definition}[section]

\newtheorem{Example}{Example}[section]
\newtheorem{Remark}{Remark}[section]
\newtheorem{Notation}{Notation}

\setcounter{tocdepth}{1}
\numberwithin{equation}{section}

\author{Ahmadreza Khazaeipoul}
\address{Department of Mathematics, University of Toronto, 40 St. George Street, Toronto, ON, Canada M5S 2E4}
\email{Ahmadreza.Khazaeipoul@mail.utoronto.ca}

% Shortcuts

\def \R {\mathbb R}

\urlstyle{same}
\begin{document}
\title{The symplectic groupoid for the Adler-Gelfand-Dikii Poisson structure}
\maketitle
\begin{abstract}
The Adler-Gelfand-Dikii Poisson structure arises naturally in the study of $n$-th order differential operators on the circle and plays a central role in Poisson geometry and integrable systems. 
Let $G$ be one of the Lie groups $\mathrm{PSL}(n)$, $\mathrm{PSp}(n)$ (for even $n$), or $\mathrm{PSO}(n)$ (for odd $n$). 
In this paper, we construct the symplectic groupoid integrating the Adler-Gelfand-Dikii Poisson structure associated to $G$ and prove that it is Morita equivalent to the quasi-symplectic groupoid integrating the Dirac structure on $Y_n(\mathbf{C})$, where $Y_n(\mathbf{C})$ denotes the quotient of the space of quasi-periodic non-degenerate curves by homotopies preserving the monodromy.

\end{abstract}
\tableofcontents

\section{Introduction}
The \emph{Adler-Gelfand-Dikii Poisson structure}, introduced in the seminal works of Adler, Gelfand, and Dikii~\cite{adler1978trace,gelfand1978family,gelfand1975asymptotic}, is a Poisson structure on the infinite-dimensional space of \(n\)th-order differential operators on the circle. When \(n=2\) and the Lie group is \(\mathrm{PSL}(2,\mathbb{R})\), this space identifies with the dual of the Virasoro algebra at level one, denoted \(\mathfrak{vir}_1^*(S^1)\).

Let \(\mathbf{C}\) be a compact, connected, oriented 1-manifold (diffeomorphic to \(S^1\)). For each of the Lie groups
\[
G \in \{\mathrm{PSL}(n),\ \mathrm{PSp}(n)\ \text{(for even } n),\ \mathrm{PSO}(n)\ \text{(for odd } n)\},
\]
let \(\mathcal{R}_n^G(\mathbf{C})\) denote the space of \(n\)-th order differential operators acting between sections of the \(\tfrac{1-n}{2}\)- and \(\tfrac{1+n}{2}\)-density bundles on \(\mathbf{C}\), having principal symbol equal to one, and satisfying the following additional conditions depending on \(G\):
\begin{itemize}
    \item for \(G = \mathrm{PSL}(n)\), the sub-principal symbol vanishes;
    \item for \(G = \mathrm{PSp}(n)\) (even \(n\)), the operators are self-adjoint;
    \item for \(G = \mathrm{PSO}(n)\) (odd \(n\)), the operators are skew-adjoint.
\end{itemize}

The Drinfeld-Sokolov reduction \cite[Sec.~8]{drinfeld1985lie} realizes \(\mathcal{R}_n^G(\mathbf{C})\) as a Marsden-Weinstein reduced space. The symplectic leaves of this Poisson structure were classified by Khesin and Ovsienko \cite[Thm.~4]{ovsienko1990symplectic}. In recent work, Alekseev and Meinrenken~\cite{alekseev2024coadjoint} refined the Goldman-Segal correspondence~\cite{goldman1980discontinuous,segal1981unitary} between coadjoint orbits of $\mathfrak{vir}_1^*(\mathbf{C})$ and conjugacy classes in an open subset $\widetilde{\mathrm{SL}}(2,\mathbb{R})_+ \subset \widetilde{\mathrm{SL}}(2,\mathbb{R})$ by lifting it to a Morita equivalence between the symplectic groupoid integrating the Poisson structure on $\mathfrak{vir}_1^*(\mathbf{C})$ and a quasi-symplectic groupoid integrating the Cartan-Dirac structure on $\widetilde{\mathrm{SL}}(2,\mathbb{R})_+$.

We generalize their result by constructing a symplectic groupoid integrating the Adler-Gelfand-Dikii Poisson structure on \(\mathcal{R}_n^G(\mathbf{C})\), and by proving that it is Morita equivalent to the quasi-symplectic groupoid integrating the Dirac structure on \(Y_n(\mathbf{C})\), the quotient of \(\mathcal{D}_n(\mathbf{C})\) by homotopies of non-degenerate curves preserving the monodromy.

\medskip
The paper is organized as follows: In Section~2, we study the relationship between quasi-periodic non-degenerate curves and differential operators in $\mathcal{R}_n^G(\mathbf{C})$. In Section~3, we construct the symplectic groupoid integrating the Adler-Gelfand-Dikii Poisson structure and establish a Morita equivalence with the quasi-symplectic groupoid on $Y_n(\mathbf{C})$.

 We define
\[
\mathcal{D}_n^G(\mathbf{C}) = \{\, \tilde{\mathbf{C}} \to \R\mathbf{P}^{\,n-1} \mid \text{solution curves of operators in } \mathcal{R}_n^G(\mathbf{C}) \,\}.
\]
In Section~3, we prove the main result:
\begin{Theorem}\label{Theorem 1.1}
Let \(G\) be one of the Lie groups listed above. Then the following Morita equivalence of quasi-symplectic groupoids holds:
\[
\begin{tikzcd}[column sep=large, row sep=large]
 {(\mathcal{D}_n^G(\mathbf{C}) \times_{Y_n(\mathbf{C})} \mathcal{D}_n^G(\mathbf{C})/G,\, \tilde{\omega}_1)} 
 \arrow[d, shift right] \arrow[d, shift left] 
 & {(\mathcal{D}_n^G(\mathbf{C}),\, \varpi_{\mathcal{D}})} 
     \arrow[ld, "\mathcal{D}_p"'] 
     \arrow[rd, "\widetilde{\mathcal{D}_q}"] 
 & {(G \ltimes Y_n(\mathbf{C}),\, \tilde{\omega}_2)} 
     \arrow[d, shift right] \arrow[d, shift left] \\
\mathcal{R}_n^G(\mathbf{C}) 
 & 
 & 
 Y_n(\mathbf{C})
\end{tikzcd}
\]
\end{Theorem}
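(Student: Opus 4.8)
The plan is to verify, for the biprincipal bibundle $\mathcal{D}_n^G(\mathbf{C})$, the three defining conditions of a Morita equivalence of quasi-symplectic groupoids in the sense of Xu (as employed for $n=2$ in \cite{alekseev2024coadjoint}): (i) that $\mathcal{D}_n^G(\mathbf{C})$ carries commuting principal left and right actions of $\Gamma_1=(\mathcal{D}_n^G(\mathbf{C})\times_{Y_n(\mathbf{C})}\mathcal{D}_n^G(\mathbf{C}))/G$ and $\Gamma_2 = G\ltimes Y_n(\mathbf{C})$ along the moment maps $\mathcal{D}_p$ and $\widetilde{\mathcal{D}_q}$; (ii) that $\varpi_{\mathcal{D}}$ is compatible with $\tilde\omega_1$, $\tilde\omega_2$ and the background $3$-forms; and (iii) that the minimal-degeneracy condition relating $\ker\varpi_{\mathcal{D}}$ to the two orbit distributions holds. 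Throughout I would invoke the dictionary from Section~2 identifying an operator $L\in\mathcal{R}_n^G(\mathbf{C})$ with the $G$-orbit of its projectivized solution curve into $\R\mathbf{P}^{n-1}$, recording the monodromy $M\in G$ of that curve.

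First I would establish the bibundle. The key point is that $\mathcal{D}_p\colon\mathcal{D}_n^G(\mathbf{C})\to\mathcal{R}_n^G(\mathbf{C})$ is a principal $G$-bundle: its fibre over $L$ is the space of projective frames of the $n$-dimensional solution space of $L$, on which $G$ acts freely and transitively by change of basis, i.e.\ by post-composition with projective transformations of $\R\mathbf{P}^{n-1}$. This exhibits $\mathcal{R}_n^G(\mathbf{C})=\mathcal{D}_n^G(\mathbf{C})/G$, so the action groupoid $\Gamma_2=G\ltimes Y_n(\mathbf{C})$ acts principally on the right along $\widetilde{\mathcal{D}_q}$ with orbits the $\mathcal{D}_p$-fibres and quotient $\mathcal{D}_p$; note that $G$ moves between $\widetilde{\mathcal{D}_q}$-fibres by conjugating $M$, and $\widetilde{\mathcal{D}_q}$ is the $G$-equivariant monodromy moment map. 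Dually, $\Gamma_1$ is the subgroupoid of the gauge groupoid $(\mathcal{D}_n^G(\mathbf{C})\times\mathcal{D}_n^G(\mathbf{C}))/G$ cut out by the relation ``equal class in $Y_n(\mathbf{C})$'', whose base orbits are precisely the symplectic leaves of the AGD structure classified in \cite{ovsienko1990symplectic}; it acts freely on the left along $\mathcal{D}_p$ with orbits the $\widetilde{\mathcal{D}_q}$-fibres and quotient $\widetilde{\mathcal{D}_q}$. That the two actions commute is the standard compatibility of a gauge-groupoid action with its structure-group action, and both moment maps are surjective submersions since every operator admits solution curves and every monodromy-preserving homotopy class is realized.

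Next I would produce the compatible $2$-forms from the Drinfeld-Sokolov picture. Realizing $\mathcal{R}_n^G(\mathbf{C})$ as the Marsden-Weinstein reduction of the space of $\mathfrak{g}$-connections on $\mathbf{C}$ \cite[Sec.~8]{drinfeld1985lie}, whose holonomy is a $G$-valued moment map in the sense of Alekseev-Malkin-Meinrenken, supplies a ``master'' quasi-Hamiltonian $2$-form before reduction. From it: $\tilde\omega_1$ is the multiplicative form on $\Gamma_1$ integrating the AGD Poisson structure; $\tilde\omega_2$ is the quasi-symplectic form on $G\ltimes Y_n(\mathbf{C})$ integrating the Cartan-Dirac structure that $Y_n(\mathbf{C})$ carries via its monodromy map to $G$, twisted by the Cartan $3$-form; and $\varpi_{\mathcal{D}}$ is the induced $2$-form on the space of solution curves. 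I would then check the three relations $d\varpi_{\mathcal{D}}=\widetilde{\mathcal{D}_q}^{\,*}\phi_Y-\mathcal{D}_p^{\,*}\phi_{\mathcal{R}}$, together with the left and right multiplicativity identities $\mathrm{act}_1^*\varpi_{\mathcal{D}}=\mathrm{pr}_{\Gamma_1}^*\tilde\omega_1+\mathrm{pr}^*\varpi_{\mathcal{D}}$ and $\mathrm{act}_2^*\varpi_{\mathcal{D}}=\mathrm{pr}^*\varpi_{\mathcal{D}}+\mathrm{pr}_{\Gamma_2}^*\tilde\omega_2$, all of which should descend from the single multiplicativity of the master form under the two quotient constructions.

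The hard part will be this $2$-form compatibility and the minimal non-degeneracy in the infinite-dimensional setting. Two points demand real care. First, one must show the master quasi-Hamiltonian form descends cleanly through the Drinfeld-Sokolov gauge fixing so that one and the same $\varpi_{\mathcal{D}}$ serves both multiplicativity relations; this amounts to matching the holonomy of the $\mathfrak{g}$-connection with the monodromy of the projectivized solution curve under the frame identification, and while I expect it to follow from functoriality of quasi-Hamiltonian reduction, it must be verified for each of $\mathrm{PSL}(n)$, $\mathrm{PSp}(n)$, $\mathrm{PSO}(n)$ using the explicit Wronskian normalization, where self-adjointness resp.\ skew-adjointness of $L$ translates into the corresponding reduction of the frame bundle. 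Second, I would verify that $\varpi_{\mathcal{D}}$ is non-degenerate transverse to the two orbit distributions $\ker d\mathcal{D}_p$ (the gauge/$G$-orbit directions) and $\ker d\widetilde{\mathcal{D}_q}$ (the monodromy-preserving homotopy directions), reducing to a tangent-space computation at a single solution curve in which these are exactly the infinitesimal frame changes and monodromy-preserving variations, and using that the reduced AGD symplectic leaves are non-degenerate. Once both points are settled, invoking Xu's characterization assembles the diagram as stated.
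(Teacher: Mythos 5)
Your strategy is genuinely different from the paper's. You propose to verify Xu's axioms for the bibundle $\mathcal{D}_n^G(\mathbf{C})$ from scratch: biprincipality of the two commuting actions, compatibility of $\varpi_{\mathcal{D}}$ with $\tilde{\omega}_1,\tilde{\omega}_2$, and minimal degeneracy. The paper never does this. It takes the Alekseev--Meinrenken Morita equivalence \eqref{eq:morita-big} between $\mathrm{Gau}_0(Q)\ltimes\mathcal{A}(Q)$ and $G\ltimes\widetilde{G}$ as given, shows that the Drinfeld--Sokolov embedding $\hat{\iota}\colon\mathcal{D}_n(\mathbf{C})\to\mathcal{P}(Q)$ is a global slice for the $\mathcal{N}$-action on the level set $\hat{\Psi}^{-1}(\Lambda)$ (Theorem~\ref{thm:DS}), shows that $\varpi_{\mathcal{P}}$ restricted to this level set is $\mathcal{N}$-basic via the contraction identity \eqref{eq:contraction-Psi} coming from Theorem~\ref{thm:varpiP}\ref{it:varpiP-c}, and then reduces the \emph{entire} diagram by the pair Lie $2$-group $\mathcal{N}\times\mathcal{N}\rightrightarrows\mathcal{N}$ using Proposition~\ref{Proposition 3.2}. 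All three Morita axioms for the reduced diagram \eqref{eq:morita-reduced} are thereby inherited rather than reproved, and the cases $\mathrm{PSp}(n)$, $\mathrm{PSO}(n)$ are handled by restricting to the $G$-submanifold $\mathcal{D}_n^G(\mathbf{C})\subset\mathcal{D}_n(\mathbf{C})$, not by rerunning Drinfeld--Sokolov for each group. You correctly identify Drinfeld--Sokolov as the mechanism producing $\varpi_{\mathcal{D}}$, so the two arguments share their main ingredient, but they are organized in opposite directions.

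The gap: the two steps you yourself flag as ``demanding real care'' --- that a single $\varpi_{\mathcal{D}}$ satisfies both multiplicativity relations after descending through the gauge fixing, and the minimal-degeneracy condition relating $\ker\varpi_{\mathcal{D}}$ to the two orbit distributions --- are exactly the load-bearing steps of a direct verification, and your proposal does not carry them out; it only asserts that they ``should descend from the master form'' and reduce to ``a tangent-space computation.'' In this infinite-dimensional setting neither is routine: the weak non-degeneracy of $\varpi_{\mathcal{D}}$ transverse to the orbit directions is not a pointwise linear-algebra fact one reads off the Wronskian normalization, and the claim that quasi-Hamiltonian reduction is ``functorial'' enough to transport both multiplicativity identities is precisely the content that needs proof. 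The paper's reduction-of-Morita-equivalence argument (the slice property of $\hat{\iota}$ together with Proposition~\ref{Proposition 3.2}) exists to supply exactly these two facts without a hands-on computation. As written, your proposal is a well-aimed plan that stops short of a proof at the points where the actual mathematical work sits. A smaller imprecision: the Dirac structure on $Y_n(\mathbf{C})$ is pulled back from the Cartan--Dirac structure on $\widetilde{G}$ via the lifted monodromy $f\colon Y_n(\mathbf{C})\to\widetilde{G}$, not via the map to $G$; this is immaterial for the background $3$-form but matters for which leaves appear.
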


Here \(\varpi_{\mathcal{D}} \in \Omega^2(\mathcal{D}_n^G(\mathbf{C}))\) denotes the 2-form described in Theorem~\ref{thm:varpiD}. This Morita equivalence induces a one-to-one correspondence between the associated Hamiltonian spaces. The proof of Theorem~\ref{Theorem 1.1} is based on a reduction of the following Morita equivalence, originally due to Alekseev-Meinrenken~\cite{alekseev2024coadjoint}:
\[
\begin{tikzcd}[column sep=large, row sep=large]
{(\mathrm{Gau}_0(Q) \ltimes \mathcal{A}(Q),\, \omega_1)} 
    \arrow[d, shift right] 
    \arrow[d, shift left] 
& {(\mathcal{P}(Q),\, \varpi_{\mathcal{P}})} 
    \arrow[ld, "\mathcal{P}_p"'] 
    \arrow[rd, "\widetilde{\mathcal{P}_q}"] 
& {(G \ltimes \widetilde{G},\, \omega_2)} 
    \arrow[d, shift right] 
    \arrow[d, shift left] \\
\mathcal{A}(Q) 
& 
& 
\widetilde{G}
\end{tikzcd}
\]
To carry this out, we prove the following proposition in the appendix.
\begin{Proposition}
Suppose the Lie 2-group \(H\times H \rightrightarrows H\) has a Hamiltonian action on a symplectic groupoid \((\mathcal{G}, \omega) \rightrightarrows (M, \pi)\). Let \(x \in \mathrm{Lie}(H)^*\) be a regular value of the moment map \(\mu_M\), and let \((x,x) \in \mathrm{Lie}(H)^*\times \mathrm{Lie}(H)^*\) be a regular value of the moment map \(\mu_{\mathcal{G}}\). Assume the stabilizers of \(x\) and \((x,x)\) are \(H\) and \(H\times H\), respectively. If the actions of \(H\times H\) on \(\mu_{\mathcal{G}}^{-1}(x,x)\) and of \(H\) on \(\mu_M^{-1}(x)\) are both free and proper, then the quotient
\[
(\mathcal{G}//(H\times H), \omega_{\mathrm{red}}) \rightrightarrows (M//H, \pi_{\mathrm{red}})
\]
is a symplectic groupoid.
\end{Proposition}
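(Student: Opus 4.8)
The plan is to realize the reduced object as an ordinary Marsden--Weinstein quotient of the total space $\mathcal{G}$ and then to show that every piece of the groupoid structure descends along the reduction, the multiplicativity of the reduced form being the one genuinely nontrivial point. Throughout I use the defining compatibility of a Hamiltonian action of the Lie 2-group $H\times H\rightrightarrows H$: the two moment maps are related by $\mu_{\mathcal{G}}=(\mu_M\circ t,\ \mu_M\circ s)$, where $s,t\colon\mathcal{G}\to M$ are the source and target, and the element $(h_1,h_2)\in H\times H$ acts on $\mathcal{G}$ covering the action of $h_1$ on $t$ and of $h_2$ on $s$. First I would carry out the reduction upstairs. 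Because $(x,x)$ is a regular value of $\mu_{\mathcal{G}}$ with stabilizer $H\times H$, the level set $Z:=\mu_{\mathcal{G}}^{-1}(x,x)$ is a smooth $(H\times H)$-invariant submanifold, and since the action on $Z$ is free and proper the quotient $\mathcal{G}//(H\times H)=Z/(H\times H)$ is a smooth symplectic manifold: writing $j\colon Z\hookrightarrow\mathcal{G}$ for the inclusion and $q\colon Z\to Z/(H\times H)$ for the projection, $\omega_{\mathrm{red}}$ is the unique form with $q^{*}\omega_{\mathrm{red}}=j^{*}\omega$. The identical argument applied to $\mu_M$, $x$, $H$ produces the smooth base $M//H=\mu_M^{-1}(x)/H$, which inherits a reduced Poisson structure $\pi_{\mathrm{red}}$.

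Next I would check that the structure maps descend. The relation $\mu_{\mathcal{G}}=(\mu_M\circ t,\mu_M\circ s)$ shows at once that $s$ and $t$ send $Z$ into $\mu_M^{-1}(x)$ and are equivariant for the two projections $H\times H\to H$, hence descend to $\bar s,\bar t\colon\mathcal{G}//(H\times H)\to M//H$; the unit $u$ carries $\mu_M^{-1}(x)$ into $Z$, since $\mu_{\mathcal{G}}(u(m))=(\mu_M(m),\mu_M(m))$, and is diagonally equivariant, so it descends to $\bar u$; the inversion swaps the two slots of $\mu_{\mathcal{G}}$ and so preserves $Z$, descending to $\overline{\mathrm{inv}}$; and for a composable pair $g_1,g_2\in Z$ one has $\mu_{\mathcal{G}}(m(g_1,g_2))=(\mu_M(t(g_1)),\mu_M(s(g_2)))=(x,x)$, so $Z$ is closed under multiplication and $m$ descends to $\bar m$. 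The groupoid identities for $(\bar s,\bar t,\bar m,\bar u,\overline{\mathrm{inv}})$ then follow formally from those upstairs together with the surjectivity of $q$.

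The main work is the multiplicativity $\bar m^{*}\omega_{\mathrm{red}}=\mathrm{pr}_1^{*}\omega_{\mathrm{red}}+\mathrm{pr}_2^{*}\omega_{\mathrm{red}}$. The key step is to identify the manifold of composable pairs of the reduced groupoid with a reduction of $\mathcal{G}^{(2)}$. On $Z^{(2)}=\{(g_1,g_2)\in Z\times Z: s(g_1)=t(g_2)\}$ the group $H^{3}$ acts by $(h_1,h_2,h_3)\cdot(g_1,g_2)=((h_1,h_2)g_1,(h_2,h_3)g_2)$; this action preserves composability (the pair-groupoid identity $(h_1,h_2)(h_2,h_3)=(h_1,h_3)$ guarantees compatibility with $m$), is free and proper, and yields $Z^{(2)}/H^{3}\cong(\mathcal{G}//(H\times H))^{(2)}$ with $q^{(2)}\colon Z^{(2)}\to(\mathcal{G}//(H\times H))^{(2)}$ a surjective submersion. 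Pulling the multiplicativity of $\omega$ back to $Z^{(2)}$ and using that $\bar m,\mathrm{pr}_1,\mathrm{pr}_2$ all lift through $q^{(2)}$, the desired identity for $\omega_{\mathrm{red}}$ follows from the uniqueness of forms descending along the surjective submersion $q^{(2)}$. Finally, that $\pi_{\mathrm{red}}$ is exactly the Poisson structure determined by the reduced symplectic groupoid, namely the one making $\bar s$ Poisson and $\bar t$ anti-Poisson, is verified by the same descent.

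I expect the principal obstacle to be the two smoothness-and-transversality statements hidden above: that the restrictions $s|_Z,t|_Z\colon Z\to\mu_M^{-1}(x)$ remain submersions after cutting by the level set, so that $\mathcal{G}//(H\times H)\rightrightarrows M//H$ is genuinely a Lie groupoid, and the accompanying identification $Z^{(2)}/H^{3}\cong(\mathcal{G}//(H\times H))^{(2)}$. Both rest on the interaction between the moment-map geometry and the symplectic orthogonality of $\ker ds$ and $\ker dt$ characteristic of a symplectic groupoid; the regular-value hypotheses on $x$ and $(x,x)$ together with the assumption that their stabilizers are all of $H$ and $H\times H$ are precisely what supply the requisite transversality and the freeness and properness of the residual $H^{3}$-action.
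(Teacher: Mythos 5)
Your overall strategy coincides with the paper's: restrict to the level sets, quotient, get the symplectic form by Marsden--Weinstein, and let the groupoid structure descend. Two remarks. First, the paper obtains all of your ``structure maps descend'' checks in one stroke by observing that, since $(\mu_{\mathcal G},\mu_M)$ is a groupoid morphism onto the pair groupoid $\mathrm{Lie}(H)^*\times\mathrm{Lie}(H)^*\rightrightarrows\mathrm{Lie}(H)^*$ and $(x,x)\rightrightarrows x$ is a (unit) subgroupoid, the level sets $\mu_{\mathcal G}^{-1}(x,x)\rightrightarrows\mu_M^{-1}(x)$ form a Lie subgroupoid; the quotient is then a Lie groupoid by the paper's Proposition on quotients by free and proper 2-actions. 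Your explicit $H^{3}$-action on $Z^{(2)}$ for multiplicativity is a nice concrete supplement that the paper leaves implicit. Second, and this is the one genuine thinness in your write-up: the claim that the Poisson structure induced on $M//H$ by the reduced symplectic groupoid equals $\pi_{\mathrm{red}}$ is not ``verified by the same descent'' --- it is a separate statement about Lie algebroids, not about the descent of maps. The paper proves it by showing that the canonical isomorphism $\rho_\omega\colon A(\mathcal G)\xrightarrow{\ \cong\ }T^*M$ defined by $\langle\rho_\omega[\xi],v\rangle=\omega(\xi,v)$ restricts and descends to an isomorphism $A(\mathcal G//(H\times H))\cong T^*(M//H)\cong T^*M//H$, i.e.\ that the reduced algebroid is the reduction of the algebroid; that is the step your argument needs to spell out to conclude that $\bar s$ is Poisson for $\pi_{\mathrm{red}}$ rather than for some other bivector.
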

\subsection*{Acknowledgements} 
I would like to thank my advisor, Eckhard Meinrenken, for introducing me to this problem, for his constant support, and for many insightful discussions.
\section{Curves and Differential Operators}
In this section, we explore the connection between operators in $\mathcal{R}_n(\bold{C})$ and quasi-periodic non-degenerate curves in $\mathbb{RP}^{\,n-1}$. We follow the notations and conventions from \cite{alekseev2024coadjoint,ovsienko2004projective}.

\subsection{Differential operators}
\begin{Notation}
    Throughout the paper, $\mathbf{C}$ denotes a compact, connected, oriented $1$-manifold. Let $\tilde{\mathbf{C}}$ be a connected covering space of $\mathbf{C}$ such that $\mathbf{C}=\tilde{\mathbf{C}}/\mathbb{Z}$. The quotient map is denoted by $\pi: \tilde{\mathbf{C}}\to \mathbf{C}$, and the deck transformation of $\tilde{\mathbf{C}}$ corresponding to translation by $1\in\mathbb{Z}$ is denoted by $\kappa$.
\end{Notation}

In this paper, we work with real densities. For any $r\in\mathbb{R}$, we denote the \emph{$r$-density bundle} over $\mathbf{C}$ by $|\Lambda|^r_{\mathbf{C}}\to \mathbf{C}$. Its space of sections is
\[
|\Omega|^r_{\mathbf{C}}=\Gamma(|\Lambda|^r_{\mathbf{C}}).
\]
The orientation of $\mathbf{C}$ identifies $|\Omega|^r_{\mathbf{C}}$ with functions for $r=0$, $1$-forms for $r=1$, and vector fields for $r=-1$. If
\[
L:|\Omega|_{\mathbf{C}}^{r_1}\longrightarrow |\Omega|_{\mathbf{C}}^{r_2}
\]
is a differential operator of order $n$, its principal symbol is an element
\[
\sigma_n(L)\in \Gamma\,\big(\operatorname{Sym}^n(T\mathbf{C})\otimes \operatorname{Hom}(|\Lambda|_{\mathbf{C}}^{r_1}, |\Lambda|_{\mathbf{C}}^{r_2})\big)\cong |\Omega|_{\mathbf{C}}^{\,r_2-r_1-n}.
\]
Hence $\sigma_n(L)$ is scalar-valued precisely when $r_2-r_1=n$. The principal symbol of a composition is the product of principal symbols.

The \emph{formal adjoint} of $L$ is the order-$n$ operator
\[
L^*:|\Omega|_{\mathbf{C}}^{1-r_2}\to|\Omega|_{\mathbf{C}}^{1-r_1},\qquad
\int_{\mathbf{C}}(Lu)\,v=\int_{\mathbf{C}}u\,(L^*v).
\]
Its principal symbol satisfies
\[
\sigma_n(L^*)=(-1)^n\sigma_n(L).
\]
If $r_2+r_1=1$, then $L$ and $L^*$ act on the same density bundles. Thus, to have both a scalar principal symbol and $L,L^*$ acting on the same bundles, we choose $r_1=\tfrac{1-n}{2}$ and $r_2=\tfrac{1+n}{2}$. In this case, we call $L$ \emph{self-adjoint} if $L^*=L$ and \emph{skew-adjoint} if $L^*=-L$.

\begin{Remark}
Let $\mathbf{C}=S^1$ and
\[
L: |\Omega|^{\frac{1-n}{2}}_{\mathbf{C}} \longrightarrow |\Omega|^{\frac{1+n}{2}}_{\mathbf{C}},\qquad
L=\sum_{i=0}^{n} a_i(\theta)\,\frac{d^i}{d\theta^i},
\]
with $\theta$ the coordinate on $S^1$. For $\mu=u(\theta)\,|\partial\theta|^{\frac{n-1}{2}}$, one has
\begin{equation}\label{2.1}
L^*(\mu)=\Big(\sum_{i=0}^{n} (-1)^i \frac{d^i}{d\theta^i}\big(a_i(\theta)\,u(\theta)\big)\Big)\,|\partial\theta|^{\frac{n+1}{2}}.
\end{equation}
\end{Remark}

\begin{Example}
Every $v\in \mathrm{Vect}(\mathbf{C})$ defines a first-order operator $\mathcal{L}_v:|\Omega|^{r_1}_{\mathbf{C}}\to|\Omega|^{r_1}_{\mathbf{C}}$ (Lie derivative) with $\sigma_1(\mathcal{L}_v)=v$. Its formal adjoint, as an operator $|\Omega|^{1-r_1}_{\mathbf{C}}\to|\Omega|^{1-r_1}_{\mathbf{C}}$, is $\mathcal{L}_v^*=-\mathcal{L}_v$.
\end{Example}

There exists an $n$-linear, skew-symmetric \emph{multidifferential} operator, the \emph{Wronskian},
\[
W: |\Omega|^{r}_{\tilde{\mathbf{C}}} \otimes \cdots \otimes |\Omega|^{r}_{\tilde{\mathbf{C}}} \longrightarrow |\Omega|^{\,n\big(r+\frac{n-1}{2}\big)}_{\tilde{\mathbf{C}}},
\]
given in a local coordinate $\theta$ by
\[
W[u_1,\ldots,u_n]=
\det\!\begin{pmatrix}
u_1 & \cdots & u_n \\
\vdots & & \vdots \\
u_1^{(n-1)} & \cdots & u_n^{(n-1)}
\end{pmatrix}.
\]
We are particularly interested in the case $r=\tfrac{1-n}{2}$, where the Wronskian is a scalar. The Wronskian of solutions $u_1,\ldots,u_n$ of a differential operator $L$ is non-zero if and only if these solutions form a fundamental system. If $L$ has principal symbol $1$, the Wronskian satisfies the Liouville equation (see \cite[Sec.~3]{khesin1992nondegenerate})
\[
\frac{d}{d\theta}W[u_1,\ldots,u_n]=a_{n-1}(\theta)\,W[u_1,\ldots,u_n].
\]

The \emph{subprincipal symbol} of the $n$-th order operator
\begin{equation}\label{2.2}
L: |\Omega|^{\frac{1-n}{2}}_{\mathbf{C}} \longrightarrow |\Omega|^{\frac{1+n}{2}}_{\mathbf{C}}
\end{equation}
is defined by
\[
\sigma^{\mathrm{sub}}(L):=\sigma_{n-1}\!\left(\frac{L-(-1)^n L^*}{2}\right)\in |\Omega|^1_{\mathbf{C}}.
\]
In these terms, the Liouville equation becomes
\[
d\,W[u_1,\ldots,u_n]=\sigma^{\mathrm{sub}}(L)\,W[u_1,\ldots,u_n].
\]
In particular, if $\sigma^{\mathrm{sub}}(L)=0$, then $W$ is constant; hence one can choose a fundamental system with $W\equiv 1$.
Consider the $n$-th order operator
\[
L: |\Omega|^{\tfrac{1-n}{2}}_{\mathbf{C}} \longrightarrow |\Omega|^{\tfrac{1+n}{2}}_{\mathbf{C}},
\]
whose principal symbol is $1$. Let $u=(u_1,\ldots,u_n)$ be a fundamental system, $u_i\in |\Omega|^{\frac{1-n}{2}}_{\tilde{\mathbf{C}}}$. Then
\[
\kappa^*u_i=\sum_j M_{ij}\,u_j.
\]
The matrix $M_u = (M_{ij})$ is called the \emph{monodromy matrix}. If $v$ denotes another fundamental system of solutions, then the corresponding monodromy matrix $M_v$ is conjugate to $M_u$. Hence, the conjugacy class is determined by $L$.
\begin{Lemma}\label{Lemma 2.2}
If $\sigma^{\mathrm{sub}}(L)=0$ for \eqref{2.2}, then the monodromy matrix of $L$ lies in $\mathrm{SL}(n,\mathbb{R})$.
\end{Lemma}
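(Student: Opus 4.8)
The plan is to deduce the determinant condition entirely from the behavior of the Wronskian under the deck transformation $\kappa$, using the Liouville equation recorded above. The starting observation is that, for $r=\tfrac{1-n}{2}$, the Wronskian $W[u_1,\dots,u_n]$ is a genuine scalar (a $0$-density, i.e.\ a function on $\tilde{\mathbf C}$), and it is nowhere zero precisely because $u=(u_1,\dots,u_n)$ is a fundamental system. Under the hypothesis $\sigma^{\mathrm{sub}}(L)=0$, the Liouville equation $d\,W[u_1,\dots,u_n]=\sigma^{\mathrm{sub}}(L)\,W[u_1,\dots,u_n]$ forces $dW=0$; since $\tilde{\mathbf C}$ is connected, $W[u_1,\dots,u_n]$ is a nonzero \emph{constant} function on $\tilde{\mathbf C}$.

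Next I would compute the Wronskian of the translated frame in two different ways. On one hand, the relation $\kappa^*u_i=\sum_j M_{ij}u_j$ together with the multilinearity and skew-symmetry of the Wronskian gives
\[
W[\kappa^*u_1,\dots,\kappa^*u_n]=\det(M_u)\,W[u_1,\dots,u_n].
\]
On the other hand, $W$ is a multidifferential operator defined coordinate-freely from the density structure, hence natural with respect to diffeomorphisms; applying this naturality to $\kappa$ yields
\[
W[\kappa^*u_1,\dots,\kappa^*u_n]=\kappa^*\bigl(W[u_1,\dots,u_n]\bigr).
\]
Because $W[u_1,\dots,u_n]$ is constant, it is invariant under any diffeomorphism, so $\kappa^*\bigl(W[u_1,\dots,u_n]\bigr)=W[u_1,\dots,u_n]$.

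Combining the two computations gives $\det(M_u)\,W[u_1,\dots,u_n]=W[u_1,\dots,u_n]$, and dividing by the nonzero constant $W[u_1,\dots,u_n]$ yields $\det(M_u)=1$. Finally, the entries $M_{ij}$ are real because $L$ is a real differential operator and the $u_i$ form a real fundamental system, so that $\kappa^*u_i$ is again a real solution and hence a real linear combination of the $u_j$; thus $M_u$ lies in $\mathrm{SL}(n,\R)$ rather than merely in $\mathrm{SL}(n,\C)$.

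The only point requiring genuine care—the step I expect to be the main obstacle—is justifying the naturality identity $W[\kappa^*u_1,\dots,\kappa^*u_n]=\kappa^*(W[u_1,\dots,u_n])$ together with the accompanying density-weight bookkeeping: one must verify that for $r=\tfrac{1-n}{2}$ the target weight $n\bigl(r+\tfrac{n-1}{2}\bigr)$ equals exactly $0$, so that $W$ produces an honest function whose constancy is meaningful and whose value is $\kappa$-invariant. Once this weight computation and the coordinate-free (hence $\mathrm{Diff}$-equivariant) character of the Wronskian are established, the remaining algebra is immediate.
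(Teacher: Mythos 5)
Your proposal is correct and follows essentially the same route as the paper: the paper's proof also rests on the constancy of the Wronskian (via the Liouville equation with vanishing subprincipal symbol) and the fact that the monodromy preserves the resulting volume form on the solution space, which is precisely your two-way computation of $W[\kappa^*u_1,\dots,\kappa^*u_n]$. Your version merely makes explicit the density-weight check $n\bigl(\tfrac{1-n}{2}+\tfrac{n-1}{2}\bigr)=0$ and the naturality of $W$ under $\kappa$, both of which are consistent with the conventions stated earlier in the paper.
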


\begin{proof}
When $\sigma^{\mathrm{sub}}(L)=0$, the Wronskian is constant and gives a parallel volume form on the solution space, which the monodromy preserves.
\end{proof}

\begin{Lemma}\label{Lemma 2.3}
If $n$ is even and \eqref{2.2} is self-adjoint, then its monodromy lies in $\mathrm{Sp}(n,\mathbb{R})$.
\end{Lemma}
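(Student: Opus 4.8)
The plan is to upgrade the conclusion of Lemma~\ref{Lemma 2.2} by exhibiting a nondegenerate skew-symmetric bilinear form on the solution space that the monodromy preserves. Since $n$ is even we have $(-1)^n=1$, so self-adjointness $L^*=L$ forces $\sigma^{\mathrm{sub}}(L)=\sigma_{n-1}\big(\tfrac{L-L^*}{2}\big)=0$; hence by Lemma~\ref{Lemma 2.2} the monodromy already lies in $\mathrm{SL}(n,\mathbb{R})$, and it remains only to produce an invariant symplectic form.

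The tool is the bilinear concomitant coming from Lagrange's identity: for any $u,v$ there is a bilinear differential expression $\mathcal{B}_L(u,v)$, polynomial in the jets of $u,v$ with coefficients built from those of $L$, such that
\[
v\,Lu-u\,L^*v=\frac{d}{d\theta}\,\mathcal{B}_L(u,v).
\]
The key structural input — and the heart of the argument — is the symmetry of $\mathcal{B}$ under the adjoint: comparing this identity for $L$ with the one for $L^*$ (and using $(L^*)^*=L$) gives $\frac{d}{d\theta}\big[\mathcal{B}_L(u,v)+\mathcal{B}_{L^*}(v,u)\big]=0$, and since a bilinear differential polynomial with identically vanishing total derivative must itself vanish, this yields the formal relation $\mathcal{B}_L(u,v)=-\mathcal{B}_{L^*}(v,u)$. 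This is exactly where the self-adjoint/skew-adjoint dichotomy and the parity of $n$ enter: for $L^*=L$ it gives $\mathcal{B}_L(u,v)=-\mathcal{B}_L(v,u)$, so the concomitant is antisymmetric (in the skew-adjoint, odd-$n$ setting of the next lemma the same computation produces a symmetric concomitant, hence an orthogonal structure). I expect verifying this symmetry to be the main point of the proof; the routes are either the classical Lagrange-identity computation or the vanishing-derivative observation above.

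With skew-symmetry in hand, I would restrict to solutions. If $u,v\in\ker L=\ker L^*$, the left-hand side vanishes, so $\mathcal{B}_L(u,v)$ is constant on $\tilde{\mathbf{C}}$; setting $\omega(u,v):=\mathcal{B}_L(u,v)$ defines a skew form on the $n$-dimensional real solution space $V$. Nondegeneracy is routine: evaluating at a point $\theta_0$ writes $\omega$ as a bilinear form in the jets $(u,u',\dots,u^{(n-1)})$ and $(v,\dots,v^{(n-1)})$ whose coefficient matrix is supported on $i+j\le n-1$ with $\pm1$ on the anti-diagonal $i+j=n-1$ (these top terms come only from the $D^n$ part of $L$ and are unaffected by the $a_i$). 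Such a matrix is invertible, and since jet evaluation on a fundamental system is an isomorphism, $\omega$ is nondegenerate.

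Finally, for monodromy invariance I would use that the coefficients of $L$ are $\kappa$-periodic, so $\kappa^*\mathcal{B}_L(u,v)=\mathcal{B}_L(\kappa^*u,\kappa^*v)$; as $\mathcal{B}_L(u,v)$ is constant this gives $\omega(\kappa^*u,\kappa^*v)=\omega(u,v)$. Writing $\kappa^*u_i=\sum_j M_{ij}u_j$ and $\Omega_{ij}=\omega(u_i,u_j)$, this reads $M\Omega M^{T}=\Omega$ with $\Omega$ nondegenerate and antisymmetric; after choosing a basis putting $\Omega$ in standard form, $M\in\mathrm{Sp}(n,\mathbb{R})$, as claimed. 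The remaining steps (constancy, nondegeneracy, invariance) are routine, so the essential content sits in the skew-symmetry of the concomitant.
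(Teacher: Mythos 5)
Your proof is correct and follows essentially the same approach as the paper, which simply asserts that the solution space of a self-adjoint operator carries a natural monodromy-invariant symplectic form (citing Beilinson--Drinfeld, Sec.~2.2). You supply the construction that the paper delegates to the reference, namely the bilinear concomitant from Lagrange's identity, and your verification of its skew-symmetry, nondegeneracy, constancy on solutions, and $\kappa$-invariance is sound.
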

\begin{proof}
In this case, the solution space carries a natural symplectic form which is preserved by parallel transport; hence the monodromy is symplectic. See \cite[Sec.~2.2]{beilinson2005opers}.
\end{proof}

\begin{Lemma}\label{Lemma 2.4}
If $n$ is odd and \eqref{2.2} is skew-adjoint, then its monodromy lies in $\mathrm{SO}(n)$.
\end{Lemma}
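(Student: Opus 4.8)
The plan is to run the argument of Lemma~\ref{Lemma 2.3} with a symmetric pairing in place of the symplectic one. I would begin with a consistency remark: since $\sigma_n(L^*)=(-1)^n\sigma_n(L)$, an operator with principal symbol $1$ can only be skew-adjoint when $n$ is odd, as otherwise $L^*=-L$ would force $1=-1$ on symbols. For such $n$ we have $(-1)^n=-1$, so $\sigma^{\mathrm{sub}}(L)=\sigma_{n-1}\!\big(\tfrac{L-(-1)^nL^*}{2}\big)=\sigma_{n-1}\!\big(\tfrac{L+L^*}{2}\big)$ vanishes when $L^*=-L$. By Lemma~\ref{Lemma 2.2} the monodromy then already lies in $\mathrm{SL}(n,\mathbb{R})$, so it suffices to exhibit a monodromy-invariant, non-degenerate, \emph{symmetric} bilinear form on the solution space and conclude that $M_u$ lies in the associated special orthogonal group.

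To build this form I would use the Lagrange identity: integration by parts produces a bilinear concomitant $P_L[u,v]$, a bilinear differential expression in $u,v$ and their derivatives of order $\le n-1$, satisfying $v\,Lu-u\,L^*v=d\,P_L[u,v]$. Two facts about $P_L$ drive the proof. First, $P_L$ depends linearly on the coefficients of $L$, and comparing the Lagrange identities for $L$ and $L^*$ gives the relation $P_L[u,v]=-P_{L^*}[v,u]$ for the canonical concomitants; combined with $L^*=-L$ (whence $P_{L^*}=-P_L$) this yields $P_L[u,v]=P_L[v,u]$, i.e. $P_L$ is symmetric. Second, evaluating on a fundamental system $u_i\in|\Omega|^{\frac{1-n}{2}}_{\tilde{\mathbf{C}}}$ of solutions---so that $Lu_i=0$ and hence $L^*u_i=-Lu_i=0$---the right-hand side of the Lagrange identity vanishes, so $P_L[u_i,u_j]$ is \emph{constant} on $\tilde{\mathbf{C}}$. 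This constant symmetric Gram matrix $B=(P_L[u_i,u_j])$ defines the desired form.

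It remains to check non-degeneracy and invariance. Non-degeneracy follows by evaluating $B$ at a point: because $\sigma_n(L)=1$, the top-order part of $P_L[u,v]$ is $\sum_{j=0}^{n-1}(-1)^j u^{(n-1-j)}v^{(j)}$, whose matrix in jet coordinates is anti-triangular with unit antidiagonal, hence invertible, while the lower-order corrections do not affect invertibility. Invariance follows since $\kappa$ carries solutions to solutions and $P_L$ is built invariantly from the $\kappa$-invariant operator $L$, so $P_L[\kappa^*u_i,\kappa^*u_j]=\kappa^*\!\big(P_L[u_i,u_j]\big)$; as $P_L[u_i,u_j]$ is a $\kappa$-invariant constant, writing $\kappa^*u_i=\sum_j M_{ij}u_j$ yields $M_uBM_u^{\top}=B$. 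Together with $\det M_u=1$ this places $M_u$ in the special orthogonal group of $B$. The main obstacle, and the point where the bare notation $\mathrm{SO}(n)$ must be interpreted, is the signature: a Witt-type reduction of the anti-triangular form $B$ shows it is split, of signature $\big(\tfrac{n+1}{2},\tfrac{n-1}{2}\big)$, so the monodromy in fact lands in the split real form $\mathrm{SO}\big(\tfrac{n+1}{2},\tfrac{n-1}{2}\big)$---the $B_{(n-1)/2}$ real form consistent with the oper picture of \cite{beilinson2005opers}. I would therefore either adopt this as the meaning of $\mathrm{SO}(n)$ or carry the signature computation out explicitly to match the paper's conventions.
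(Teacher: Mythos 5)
Your proof is correct and follows the same route as the paper, whose one-line argument simply asserts that the solution space carries a monodromy-invariant non-degenerate symmetric bilinear form (citing Beilinson--Drinfeld); you supply the construction explicitly via the Lagrange bilinear concomitant, together with the symmetry, constancy, and non-degeneracy checks. Your closing caveat is well taken: over $\mathbb{R}$ the resulting form is split, so the monodromy lands in $\mathrm{SO}\big(\tfrac{n+1}{2},\tfrac{n-1}{2}\big)$ rather than the compact group, and the paper's notation $\mathrm{SO}(n)$ has to be read in that sense (just as Lemma~\ref{Lemma 2.3} refers to the real symplectic group).
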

\begin{proof}
The solution space carries a natural non-degenerate symmetric bilinear form, preserved by parallel transport, so the monodromy is orthogonal. See \cite[Sec.~2.2]{beilinson2005opers}.
\end{proof}

\begin{Notation}
If $E\to \mathbf{C}$ is a vector bundle, its $r$-th \emph{jet bundle} is $J^r(E)\to \mathbf{C}$.
\end{Notation}

\begin{Notation}
For $k\ge 0$ and vector bundles $E,F$ over $\mathbf{C}$, $\operatorname{DO}^{k}(E,F)$ denotes the space of $k$-th order differential operators $\Gamma(E)\to \Gamma(F)$.
\end{Notation}
Let $\mathcal{R}^G_n(\mathbf{C})$ be the space of $n$-th order operator $\eqref{2.2}$ with principal symbol $1$, satisfying:
\begin{enumerate}
    \item $G=\mathrm{PSL}(n)$: $\sigma^{\mathrm{sub}}(L)=0$;
    \item $G=\mathrm{PSp}(n)$ with $n$ even: $L$ is self-adjoint;
    \item $G=\mathrm{PSO}(n)$ with $n$ odd: $L$ is skew-adjoint.
\end{enumerate}

\begin{Notation}
For $G=\mathrm{PSL}(n)$, write $\mathcal{R}_n(\mathbf{C})$ for $\mathcal{R}^G_n(\mathbf{C})$.
\end{Notation}

Then $\mathcal{R}_n(\mathbf{C})$ is an affine space modeled on
\[
\operatorname{DO}^{\,n-2}\!\big(|\Lambda|_{\mathbf{C}}^{\frac{1-n}{2}},\,|\Lambda|_{\mathbf{C}}^{\frac{1+n}{2}}\big).
\]
For even $n$ ($G=\mathrm{PSp}(n)$) and odd $n$ ($G=\mathrm{PSO}(n)$), the spaces $\mathcal{R}^G_n(\mathbf{C})$ are the intersections of the self-adjoint and skew-adjoint loci, respectively, with $\mathcal{R}_n(\mathbf{C})$; their underlying linear spaces are the corresponding self-/skew-adjoint subspaces of $\operatorname{DO}^{\,n-2}$ above.

\subsection{Solution curves}
\begin{Notation}
Throughout this subsection, $G\in\{\mathrm{PSL}(n),\mathrm{PSp}(n)\text{ (even $n$)},\mathrm{PSO}(n)\text{ (odd $n$)}\}$.
\end{Notation}

\begin{Definition}
A curve
\[
\gamma:\tilde{\mathbf{C}}\longrightarrow \mathbb{RP}^{\,n-1}
\]
is \emph{quasi-periodic non-degenerate} with monodromy $h\in G$ if:
\begin{enumerate}
    \item \label{cond:nondegenerate} Let $F_k(x)$ be the projective span of $\gamma'(x),\ldots,\gamma^{(k)}(x)$. Then
    \[
    F_1(x)\subset \cdots \subset F_{n-1}(x)=\mathbb{RP}^{\,n-1}.
    \]
    \item There exists $h\in G$ such that
    \[
    \gamma(\kappa(x))=h\cdot \gamma(x).
    \]
\end{enumerate}
\end{Definition}

Condition~\ref{cond:nondegenerate} is equivalent to: for any lift $\Gamma$ of $\gamma$ to $\mathbb{R}^n$ and every $x\in \tilde{\mathbf{C}}$,
\[
\det\!\big(\Gamma(x),\Gamma'(x),\ldots,\Gamma^{(n-1)}(x)\big)\neq 0.
\]

For each $x\in \tilde{\mathbf{C}}$, the subspace $F_{n-2}(x)\subset \mathbb{RP}^{\,n-1}$ is a hyperplane in $\mathbb{R}^n$, which determines a point of the dual projective space $\mathbb{RP}^{\,n-1}{}^* \cong \mathbb{RP}^{\,n-1}$. Hence we define the \emph{dual curve}
\[
\gamma^*:\tilde{\mathbf{C}}\to \mathbb{RP}^{\,n-1},\qquad \gamma^*(x):=\text{ann}(F_{n-2}(x)).
\]

\begin{Lemma}[\cite{ovsienko2004projective}, Thm.~1.1.3]
If $\gamma$ is quasi-periodic non-degenerate, then $\gamma^*$ is quasi-periodic non-degenerate and $(\gamma^*)^*=\gamma$.
\end{Lemma}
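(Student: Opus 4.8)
The plan is to work with an explicit lift of the dual curve built from the Wronskian pairing, and to reduce all three assertions — non-degeneracy of $\gamma^*$, quasi-periodicity of $\gamma^*$, and biduality — to elementary determinant identities. Fix a lift $\Gamma:\tilde{\mathbf C}\to\mathbb R^n\ssminus\{0\}$ of $\gamma$, so that non-degeneracy reads $\det(\Gamma,\Gamma',\dots,\Gamma^{(n-1)})\neq0$ pointwise, and write $\widehat F_k(x):=\operatorname{span}_{\mathbb R}(\Gamma(x),\dots,\Gamma^{(k)}(x))\subset\mathbb R^n$ for the linear hull of the osculating flag. I would then define the lift $\Gamma^*:\tilde{\mathbf C}\to(\mathbb R^n)^*\ssminus\{0\}$ of $\gamma^*$ by
\[
\langle \Gamma^*(x),v\rangle:=\det\!\big(\Gamma(x),\Gamma'(x),\dots,\Gamma^{(n-2)}(x),v\big),\qquad v\in\mathbb R^n.
\]
Since the determinant has a repeated column whenever $v\in\widehat F_{n-2}(x)$, the covector $\Gamma^*(x)$ annihilates $\widehat F_{n-2}(x)$, and it is nonzero because $\langle\Gamma^*(x),\Gamma^{(n-1)}(x)\rangle=\det(\Gamma,\dots,\Gamma^{(n-1)})\neq0$; hence $\Gamma^*$ is indeed a lift of $\gamma^*$.

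The main step — and the one I expect to be the principal obstacle — is proving that $\gamma^*$ is again non-degenerate, i.e. that the Wronskian of $\Gamma^*$ does not vanish. My key tool is the \emph{flag-annihilation identity}
\[
\big\langle (\Gamma^*)^{(k)}(x),\,\Gamma^{(j)}(x)\big\rangle=0\qquad\text{whenever } j\le n-2-k,
\]
which I would prove by induction on $k$: differentiating the relation $\langle(\Gamma^*)^{(k)},\Gamma^{(j)}\rangle=0$ (valid for $j\le n-2-k$) and applying the Leibniz rule gives $\langle(\Gamma^*)^{(k+1)},\Gamma^{(j)}\rangle=-\langle(\Gamma^*)^{(k)},\Gamma^{(j+1)}\rangle$, and the right side vanishes for $j\le n-3-k$ by the inductive hypothesis. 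The same differentiation applied at the boundary index $j=n-2-k$ yields the recursion $c_k:=\langle(\Gamma^*)^{(k)},\Gamma^{(n-1-k)}\rangle=-c_{k+1}$, so that $c_k=(-1)^k\det(\Gamma,\dots,\Gamma^{(n-1)})\neq0$ for all $k$. Consequently the pairing matrix $P_{ij}=\langle(\Gamma^*)^{(i)},\Gamma^{(j)}\rangle$ is anti-triangular with nonvanishing anti-diagonal, hence invertible. Writing $P=(W^*)^{\!\top}W$ with $W=[\,\Gamma\mid\cdots\mid\Gamma^{(n-1)}\,]$ and $W^*=[\,\Gamma^*\mid\cdots\mid(\Gamma^*)^{(n-1)}\,]$, invertibility of $P$ together with $\det W\neq0$ forces $\det W^*\neq0$, which is precisely the non-degeneracy of $\gamma^*$.

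For quasi-periodicity I would choose the lift so that $\Gamma\circ\kappa=A\,\Gamma$ for the constant monodromy matrix $A\in G$, which is available because the relevant Wronskian and adjointness conditions make the monodromy a genuine linear map (cf. Lemmas~\ref{Lemma 2.2}--\ref{Lemma 2.4}). As $\kappa$ is a translation we have $\Gamma^{(j)}\circ\kappa=A\,\Gamma^{(j)}$, and substituting into the defining determinant gives $\Gamma^*\circ\kappa=\det(A)\,(A^{\!\top})^{-1}\Gamma^*$; thus $\gamma^*$ has projective monodromy the contragredient $(A^{\!\top})^{-1}$, which again lies in $G$ because $\mathrm{SL}(n)$, $\mathrm{Sp}(n)$, and $\mathrm{SO}(n)$ are each stable under $A\mapsto(A^{\!\top})^{-1}$.

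Finally, for biduality I would promote the annihilation identity to the flag equality $\widehat F^{\,*}_k(x)=\operatorname{ann}\big(\widehat F_{n-2-k}(x)\big)$, where $\widehat F^{\,*}_k(x):=\operatorname{span}\big((\Gamma^*)(x),\dots,(\Gamma^*)^{(k)}(x)\big)$: containment of the left side in the right is the identity above, and equality holds by the dimension count $k+1=\dim\operatorname{ann}(\widehat F_{n-2-k}(x))$ once the non-degeneracy of $\gamma^*$ guarantees that $(\Gamma^*)^{(0)},\dots,(\Gamma^*)^{(k)}$ are independent. Taking $k=n-2$ gives $\widehat F^{\,*}_{n-2}(x)=\operatorname{ann}(\mathbb R\,\Gamma(x))$, and since applying the dual construction to $\Gamma^*$ recovers $(\gamma^*)^*(x)=\operatorname{ann}(\widehat F^{\,*}_{n-2}(x))$ inside $((\mathbb R^n)^*)^*\cong\mathbb R^n$, the double-annihilator identity in finite dimensions yields $(\gamma^*)^*(x)=\mathbb R\,\Gamma(x)=\gamma(x)$, completing the proof.
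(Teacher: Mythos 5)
Your proof is correct. Note that the paper does not actually prove this lemma: it is quoted verbatim from Ovsienko--Tabachnikov (Thm.~1.1.3), where the standard argument runs through the correspondence between non-degenerate curves and linear differential operators, identifying the dual curve with the solution curve of the adjoint operator $(-1)^nL^*$ and deducing biduality from $(L^*)^*=L$. Your route is genuinely different and more elementary: the determinantal lift $\langle\Gamma^*,v\rangle=\det(\Gamma,\dots,\Gamma^{(n-2)},v)$, the inductive flag-annihilation identity, the anti-triangular pairing matrix $P=(W^*)^{\top}W$ forcing $\det W^*\neq 0$, and the dimension count $\widehat F^{\,*}_k=\operatorname{ann}(\widehat F_{n-2-k})$ are all sound, and the contragredient computation for the monodromy is the right one (with $(A^{\top})^{-1}$ staying in $G$ in each of the three cases). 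Two cosmetic points: your $\widehat F_k$ includes $\Gamma$ itself, which is the convention consistent with the paper's determinant criterion even though the paper's displayed definition of $F_k$ starts at $\gamma'$; and the normalization $\Gamma\circ\kappa=A\,\Gamma$ with $A$ constant is not strictly needed --- a general lift satisfies $\Gamma\circ\kappa=\lambda\,A\,\Gamma$ for a nonvanishing scalar $\lambda$, but since $\widehat F_k(\kappa(x))=A\,\widehat F_k(x)$ regardless, the projective conclusion $\gamma^*\circ\kappa=(h^{\top})^{-1}\cdot\gamma^*$ is unaffected. What your argument buys is a self-contained linear-algebra proof independent of the operator dictionary; what the cited route buys is that duality of curves and adjointness of operators are established simultaneously, which is the form the paper actually uses later (Prop.~2.1 of Section~2.3).
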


We call $\gamma$ \emph{self-dual} if $\gamma^*=\gamma$.

Suppose $L\in \mathcal{R}_n(\mathbf{C})$ and $u_1,\ldots,u_n\in |\Omega|^{\frac{1-n}{2}}_{\tilde{\mathbf{C}}}$ is a fundamental system of solutions. Following \cite[Def.~3.2]{ovsienko1990symplectic}, the curve
\[
\gamma:\tilde{\mathbf{C}}\to \mathbb{RP}^{\,n-1},\qquad x\mapsto (u_1(x):\cdots:u_n(x)),
\]
is quasi-periodic non-degenerate with monodromy in $\mathrm{PSL}(n)$. If $L\in \mathcal{R}^G_n(\mathbf{C})$, then by Lemmas \ref{Lemma 2.2}–\ref{Lemma 2.4}, the monodromy of $\gamma$ lies in $G$. For these Lie groups, the Wronskian $W[u_1,\ldots,u_n]$ is constant.

\subsection{From curves to differential operators}
Let $(u_1,\ldots,u_n)$ be a lift of a quasi-periodic non-degenerate curve $\gamma$ to $\mathbb{R}^n$ with Wronskian $1$. Define
\begin{equation}\label{2.3}
L(u)=(-1)^nW[u,u_1,\ldots,u_n].
\end{equation}
The definition \eqref{2.3} does not depend on the choice of lift of $\gamma$ to $\mathbb{R}^n$.
\begin{Definition}
Let $\mathcal{D}_n^G(\mathbf{C})$ be the space of quasi-periodic non-degenerate curves
\[
\gamma:\tilde{\mathbf{C}} \to \mathbb{RP}^{\,n-1}
\]
with:
\begin{enumerate}
    \item $G=\mathrm{PSL}(n)$: $\gamma$ admits a lift with constant Wronskian;
    \item $G=\mathrm{PSp}(n)$, $n$ even: $\gamma$ is self-dual;
    \item $G=\mathrm{PSO}(n)$, $n$ odd: $\gamma$ is self-dual.
\end{enumerate}
\end{Definition}

\begin{Notation}
Write $\mathcal{D}_n(\mathbf{C})$ for $\mathcal{D}_n^G(\mathbf{C})$ when $G=\mathrm{PSL}(n)$.
\end{Notation}

\begin{Proposition}[\cite{ovsienko2004projective}, Thm.~2.2.6]
Let $L: |\Omega|_{\mathbf{C}}^{\frac{1-n}{2}} \to |\Omega|_{\mathbf{C}}^{\frac{1+n}{2}}$ be the operator with principal symbol $1$ associated with the quasi-periodic non-degenerate curve $\gamma:\tilde{\mathbf{C}}\to \mathbb{RP}^{\,n-1}$. Then the operator associated with the dual curve $\gamma^*$ is $(-1)^n L^*$.
\end{Proposition}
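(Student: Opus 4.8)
The plan is to fix a lift $\Gamma=(u_1,\dots,u_n)$ of $\gamma$ with $W[u_1,\dots,u_n]\equiv 1$, to produce an explicit lift $\Gamma^*=(u_1^*,\dots,u_n^*)$ of the dual curve $\gamma^*$, and to show that $u_1^*,\dots,u_n^*$ is a fundamental system for the operator $(-1)^nL^*$. Granting this, the proposition is immediate: an $n$-th order operator on $\tilde{\mathbf{C}}$ with principal symbol $1$ is uniquely determined by its ($n$-dimensional) kernel, since the difference of two such operators has order $\le n-1$ yet would vanish on an $n$-dimensional solution space. As $\sigma_n\big((-1)^nL^*\big)=(-1)^n\sigma_n(L^*)=\sigma_n(L)=1$, the operator $(-1)^nL^*$ is monic, and once its kernel is shown to coincide with the solution space attached to $\gamma^*$ it must equal the operator associated with $\gamma^*$ by construction~\eqref{2.3}.

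First I would write down the dual lift explicitly. Since $F_{n-2}(x)$ is the hyperplane spanned by $\Gamma(x),\Gamma'(x),\dots,\Gamma^{(n-2)}(x)$, its annihilator is represented by the covector with components
\[
u_i^*(x)=(-1)^{\,n+i}\,W\big[u_1,\dots,\widehat{u_i},\dots,u_n\big](x),
\]
obtained by expanding $\det\big(\Gamma,\Gamma',\dots,\Gamma^{(n-2)},e_i\big)$ along its last column (the hat denotes omission, and the Wronskian on the right is the $(n-1)\times(n-1)$ determinant using derivatives up to order $n-2$). Using the duality lemma I would verify that $\Gamma^*=(u_1^*,\dots,u_n^*)$ is a genuine non-vanishing lift of $\gamma^*$, that the biorthogonality relations $\langle \Gamma^*,\Gamma^{(j)}\rangle=\delta_{j,\,n-1}$ hold for $0\le j\le n-1$, and that $W[u_1^*,\dots,u_n^*]$ is again constant, so that $\Gamma^*$ is an admissible lift in the sense of~\eqref{2.3}.

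The central step is to show $L^*[u_i^*]=0$. Writing $L=D^{\,n}+a_{n-1}D^{\,n-1}+\dots+a_0$ with $D=d/d\theta$, the kernel of $L$ is the solution space of the first-order system $y'=Ay$ with $A$ the companion matrix, whose fundamental matrix $\Phi$ has entries $\Phi_{ij}=u_j^{(i-1)}$ and $\det\Phi=W[u_1,\dots,u_n]=1$. The Lagrange-adjoint system $z'=-A^{\top}z$ has fundamental matrix $(\Phi^{-1})^{\top}$, and its bottom component satisfies the scalar adjoint equation $L^*[z_n]=0$; reading off the last row of $\Phi^{-1}$ by Cramer's rule recovers precisely the functions $u_j^*$, identifying $u_1^*,\dots,u_n^*$ as a fundamental system of $L^*$. (Equivalently, one may derive $L^*[u_i^*]=0$ from the Lagrange bilinear concomitant, using that $B(u_k,u_i^*)$ is constant when $u_k\in\ker L$.) I expect the sign and normalization bookkeeping to be the main obstacle: tracking the factor $(-1)^n$ through the passage $L\mapsto L^*$, and verifying that the constant-Wronskian normalization of $\Gamma^*$ matches the one required by~\eqref{2.3}, so that the two monic operators agree on the nose rather than merely up to a scalar. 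Finally I would record the compatibility with the subsidiary structure: the computation $\sigma^{\mathrm{sub}}\big((-1)^nL^*\big)=-\sigma^{\mathrm{sub}}(L)$ shows $(-1)^nL^*\in\mathcal{R}_n(\mathbf{C})$ whenever $L$ does, and in the self-dual cases $G=\mathrm{PSp}(n),\mathrm{PSO}(n)$ the identity $\gamma^*=\gamma$ recovers exactly the self-adjointness ($n$ even) or skew-adjointness ($n$ odd) built into $\mathcal{R}^G_n(\mathbf{C})$.
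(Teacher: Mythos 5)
Your argument is correct, but note that the paper itself offers no proof of this statement: it is quoted verbatim from Ovsienko--Tabachnikov (Thm.~2.2.6) and used as a black box, so there is nothing internal to compare against. Your proposal is a complete, self-contained version of the classical argument: the cofactor formula $u_i^*=(-1)^{n+i}W[u_1,\dots,\widehat{u_i},\dots,u_n]$ identifies the dual lift with the last row of $\Phi^{-1}$, the biorthogonality $\sum_i u_i^*\,u_i^{(j)}=(\Phi\Phi^{-1})_{j+1,n}=\delta_{j,n-1}$ confirms that it represents $\operatorname{ann}(F_{n-2})$, and the passage through the Lagrange-adjoint system $z'=-A^{\top}z$ (whose last component satisfies the scalar adjoint equation, exactly formula \eqref{2.1}) shows the $u_i^*$ span $\ker L^*$; the uniqueness step (two monic operators of order $n$ agreeing on an $n$-dimensional space with nonvanishing Wronskian coincide) is also sound. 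The two points you flag as bookkeeping do work out and are worth recording explicitly: the density weight of $u_i^*$ is $(n-1)\bigl(\tfrac{1-n}{2}+\tfrac{n-2}{2}\bigr)=\tfrac{1-n}{2}$, matching the domain of $L^*$, and $W[u_1^*,\dots,u_n^*]=\pm\det(\Phi^{-1})=\pm 1$ because the Wronskian matrix of the $u_i^*$ differs from $(\Phi^{-1})^{\top}$ by a unimodular triangular change of rows, the residual sign being absorbed by reordering the $u_i^*$ so that \eqref{2.3} returns a monic operator. The only caveat is that this presupposes the coordinate computation on $\tilde{\mathbf{C}}\cong\mathbb{R}$; since all objects are naturally defined and the identity is local, this is harmless.
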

Therefore, elements of $\mathcal{D}_n^G(\mathbf{C})$ correspond to operators in $\mathcal{R}_n^G(\mathbf{C})$, and conversely. This defines a principal $G$-bundle
\[
\mathcal{D}_p:\mathcal{D}_n^G(\mathbf{C})\to \mathcal{R}_n^G(\mathbf{C}),
\] 
where the $G$-action on $\gamma\in \mathcal{D}_n^G(\mathbf{C})$ is
\[
(g\cdot \gamma)(x)=g\cdot \gamma(x),\qquad g\in G,\; x\in \tilde{\mathbf{C}}.
\]
Associating to $\gamma$ its monodromy yields
\[
\mathcal{D}_q:\mathcal{D}_n^G(\mathbf{C})\to G,\qquad \gamma\mapsto h.
\]
Thus, the ambiguity of monodromy up to conjugacy for differential equations on a circle is resolved by passing to solution curves.
Note that $\mathcal{D}_q$ is $G$-equivariant for the conjugation action on $G$.

\begin{Remark}
For $G=\mathrm{SL}(3,\mathbb{R})$ and $\bold{C}=S^1$, taking third-order linear ODEs with constant coefficients shows that the conjugacy classes of the following Jordan forms with determinant $1$ lie in the image of $\mathcal{D}_q$:
\[
\begin{pmatrix}\lambda&0&0\\ 0&\mu&0\\ 0&0&\nu\end{pmatrix},\quad
\begin{pmatrix}\lambda&1&0\\ 0&\lambda&0\\ 0&0&\mu\end{pmatrix},\quad
\begin{pmatrix}1&1&0\\ 0&1&1\\ 0&0&1\end{pmatrix},\quad
\begin{pmatrix}
\lambda & 0 & 0 \\
0 & e^{\alpha}\cos\beta & -e^{\alpha}\sin\beta \\
0 & e^{\alpha}\sin\beta & e^{\alpha}\cos\beta
\end{pmatrix},
\]
with $\lambda,\mu,\nu>0$ pairwise distinct.
\end{Remark}

The $G$-action on $\mathbb{RP}^{\,n-1}$ lifts to a $\tilde{G}$-action on the universal cover $S^{\,n-1}$. If
\[
\gamma(\kappa(x))=h\cdot \gamma(x),
\]
and $\Gamma$ is a lift of $\gamma$ to $\widetilde{\mathbb{RP}}{}^{\,n-1}$, then
\[
\Gamma(\kappa(x))=\tilde{h}\cdot \Gamma(x).
\]
Consequently,
\[
\begin{tikzcd}
\mathcal{D}^G_n(\mathbf{C}) \arrow[r, "{\widetilde{\mathcal{D}_{q}^{\prime}}}"] \arrow[rd, "\mathcal{D}_{q}^{\prime}"'] & \tilde{G} \arrow[d, "p"] \\
& G
\end{tikzcd}
\]

where $\widetilde{\mathcal{D}^{'}_q}$ is the lifted monodromy and $p$ is the universal covering map.
\begin{Proposition}\label{Proposition 2.2}
$\mathcal{D}^{'}_q$ is a submersion.
\end{Proposition}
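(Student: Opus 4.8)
The plan is to show that the differential of $\mathcal{D}'_q$ is surjective at every $\gamma$; since the target $G$ is finite-dimensional and the domain is a (tame Fréchet) manifold, a continuous surjection of tangent spaces is automatically complemented, so this is exactly what is required. Fix $\gamma\in\mathcal{D}^G_n(\mathbf{C})$ with monodromy $h$. Identifying $\tilde{\mathbf{C}}$ with $\mathbb{R}$ and choosing a fundamental domain $[x_0,x_0+1]$, I would lift $\gamma$ to the moving frame $\Phi\colon\tilde{\mathbf{C}}\to \mathrm{SL}(n,\mathbb{R})$ built from a Wronskian-one lift $\Gamma$ and its derivatives (landing in $\mathrm{Sp}(n,\mathbb{R})$ or $\mathrm{SO}(n)$ in the self-/skew-adjoint cases). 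This frame satisfies $\Phi'=\Phi\,C$, where $C$ is the first-order connection attached to $L=\mathcal{D}_p(\gamma)$, and the monodromy matrix is the holonomy $\widetilde{M}=\Phi(x_0+1)\Phi(x_0)^{-1}$, whose image in $G$ under the covering by this linear group is $h$. Because the operators in $\mathcal{R}^G_n(\mathbf{C})$ are $\mathfrak{g}$-opers, the Beilinson--Drinfeld canonical form lets me write $C=f+\ell(s)$ with $f$ the principal (regular) nilpotent of $\mathfrak{g}=\mathrm{Lie}(G)$ and $\ell(s)$ valued in the Kostant slice $\mathfrak{g}^e=\ker(\mathrm{ad}_e)$; the admissible deformations of $L$ are exactly the $\delta C$ valued in $V:=\mathfrak{g}^e$. (Passing to this gauge only conjugates the monodromy by a smoothly varying element, which does not affect submersivity.)

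The first key step is the holonomy variation formula. Fixing the initial frame and perturbing $C\mapsto C+\delta C$ with $\delta C$ valued in $V$, a standard first-order computation gives
\[
\delta\widetilde{M}\,\widetilde{M}^{-1}=\int_{x_0}^{x_0+1}\mathrm{Ad}(\Phi(s))\,\delta C(s)\,ds .
\]
Hence, in the right trivialization $T_{\widetilde M}G\cong\mathfrak{g}$, the image of $d\mathcal{D}'_q$ contains the image of the linear map $\delta C\mapsto\int \mathrm{Ad}(\Phi(s))\,\delta C(s)\,ds$. Allowing $\delta C$ to run over all $V$-valued functions (approximating delta-distributions at each $s$), this image is exactly
\[
\mathcal{S}:=\operatorname{span}\bigl\{\mathrm{Ad}(\Phi(s))v \;:\; s\in[x_0,x_0+1],\ v\in V\bigr\}\subseteq\mathfrak{g},
\]
so it suffices to prove $\mathcal{S}=\mathfrak{g}$.

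For this I would exploit that $\mathcal{S}$ is a fixed finite-dimensional (hence closed) subspace through which the smooth curves $c_v(s)=\mathrm{Ad}(\Phi(s))v$ pass; therefore all derivatives $c_v^{(k)}(x_0)=\mathrm{Ad}(\Phi(x_0))\,D_k^{(v)}$ lie in $\mathcal{S}$. The crucial structural inputs are: (i) by Kostant's theorem the slice $V=\mathfrak{g}^e$ is \emph{abelian} and sits in strictly positive principal degrees, so the first bracket is clean, $[\ell(s),v]=0$; and (ii) in the principal $\mathfrak{sl}_2$-grading $\mathfrak{g}=\bigoplus_d\mathfrak{g}_d$ (with $f\in\mathfrak{g}_{-1}$), an induction shows, for homogeneous $v$, that $D_k^{(v)}=\mathrm{ad}_f^{\,k}v+(\text{terms of strictly higher degree})$, since every contamination term carries an extra factor of $\ell(s)\in\mathfrak{g}_{\ge 1}$. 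A downward induction on degree, together with the controllability identity $\sum_k\mathrm{ad}_f^{\,k}V=\mathfrak{g}$ — which holds because $V=\mathfrak{g}^e$ contains a highest-weight vector of each principal $\mathfrak{sl}_2$-subrepresentation and $\mathrm{ad}_f$ generates each such subrepresentation from it — then yields $\operatorname{span}\{D_k^{(v)}\}=\mathfrak{g}$, whence $\mathcal{S}\supseteq\mathrm{Ad}(\Phi(x_0))\mathfrak{g}=\mathfrak{g}$.

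Finally I would check that these infinitesimal moves are realized by genuine paths in $\mathcal{D}^G_n(\mathbf{C})$: for small $\delta L$ the perturbed operator stays in $\mathcal{R}^G_n(\mathbf{C})$, its Wronskian-one solution frame with fixed initial value varies smoothly, non-degeneracy is an open condition, and the resulting curves lie in $\mathcal{D}^G_n(\mathbf{C})$, so $d\mathcal{D}'_q$ really attains the directions computed above. The three cases $\mathrm{PSL}(n)$, $\mathrm{PSp}(n)$, $\mathrm{PSO}(n)$ are treated uniformly through the $\mathfrak{g}$-oper formalism (for $\mathrm{PSL}(n)$ the slice $V$ is the familiar last column of the companion matrix). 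I expect the main obstacle to be the content of steps (i)--(ii): verifying that the only available deformations — the oper/Kostant directions $V$ — already sweep out all of $\mathfrak{g}$ under the holonomy, i.e. the regularity/controllability statement $\sum_k\mathrm{ad}_f^{\,k}V=\mathfrak{g}$, together with the degree bookkeeping that upgrades it from the pointwise bracket algebra to the global span $\mathcal{S}$, carried out honestly for each structure group (where $V$ must be identified as an abelian, principal-weight slice and $f$ as a regular nilpotent of $\mathfrak{g}$).
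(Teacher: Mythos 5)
Your proposal is correct in outline but takes a genuinely different route from the paper. The paper's proof never computes a differential: it exhibits explicit local sections of $\mathcal{D}'_q$ by deforming the curve itself, setting $\gamma_\xi(x)=\exp(\xi(x))\cdot\gamma(x)$ and solving the cocycle condition $\xi(x+1)=\eta+\operatorname{Ad}_h\xi(x)$ so that the new monodromy is exactly $h\exp(\eta)$; letting $\eta$ range over a neighbourhood of $0$ in $\mathfrak{g}$ and using openness of non-degeneracy gives a section through $h$. This is short and uses no structure theory. Your argument instead fixes the initial frame, varies only the operator $L$, and proves surjectivity of the holonomy variation $\delta C\mapsto\int\operatorname{Ad}(\Phi(s))\,\delta C(s)\,ds$ on the oper slice $V$ via the controllability identity $\sum_k\operatorname{ad}_f^{\,k}V=\mathfrak{g}$. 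That is heavier machinery but buys something real: it shows the monodromy is already submersive along the slice of fixed initial frame, and the deformations manifestly stay inside $\mathcal{D}^G_n(\mathbf{C})$ because they come from deformations of $L\in\mathcal{R}^G_n(\mathbf{C})$ (the paper's pointwise $G$-translation requires the further check that it preserves the constant-Wronskian or self-duality condition, not just non-degeneracy). Two spots in your sketch need tightening. First, the parenthetical claim that passing to the Kostant gauge ``does not affect submersivity'' is not automatic, since the gauge element depends on $L$ and contributes a coupled term $(1-\operatorname{Ad}(\widetilde M))(\delta c\,c^{-1})$ to the variation; the clean fix for $\mathrm{PSL}(n)$ is to run the computation directly in the companion gauge, where $V=\operatorname{span}\{E_{in}\}_{i<n}$ is still abelian, homogeneous of degrees $1,\dots,n-1$, and satisfies $V+[f,\mathfrak{g}]=\mathfrak{g}$, or else to absorb the extra term using the conjugation directions supplied by the $G$-action on $\mathcal{D}^G_n(\mathbf{C})$. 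Second, for $\mathrm{PSp}(n)$ and $\mathrm{PSO}(n)$ the identification of the admissible $\delta C$ with such a slice is precisely the Drinfeld--Sokolov input and should be cited or verified rather than asserted. With those repairs, and with the finite-dimensional reduction you mention at the end (which is what makes ``surjective differential'' suffice in this Fr\'echet setting), the argument is sound.
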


\begin{proof}
Assume $\mathbf{C}=S^1$, so $\kappa(x)=x+1$. We produce local sections. Let $\mathfrak{g}=\mathrm{Lie}(G)$ and $\exp:\mathfrak{g}\to G$ be the exponential map. If $h$ is in the image of $\mathcal{D}^{'}_q$, choose $\gamma$ with $\gamma(x+1)=h\cdot \gamma(x)$. For a small $\xi:\mathbb{R}\to \mathfrak{g}$, set $\gamma_\xi(x)=\exp(\xi(x))\cdot \gamma(x)$ and look for $h_\xi$ close to $h$ with
\[
\gamma_\xi(x+1)=h_\xi\cdot \gamma_\xi(x).
\]
Using $\gamma(x+1)=h\cdot \gamma(x)$, this reads
\[
\exp(\xi(x+1))=h_\xi\,\exp(\xi(x))\,h^{-1}.
\]
For $h_\xi=h\,\exp(\eta)$ with small $\eta\in\mathfrak{g}$, and since $\exp$ is a local diffeomorphism near $0$, one can solve
\[
\xi(x+1)=\eta+\operatorname{Ad}_h \xi(x)
\]
for $\xi$ (e.g. by choosing $\xi$ on $[0,1)$ and defining it inductively), yielding a family $\gamma_\xi$ with monodromy $h_\xi$. Non-degeneracy is open, so this gives local sections through $h$.
\end{proof}

Since $p:\tilde{G}\to G$ is a local diffeomorphism, Proposition \ref{Proposition 2.2} implies $\widetilde{\mathcal{D}^{'}_q}$ is a submersion. Its image is thus open; denote it by $\tilde{G}_+$.
\subsection{Action of $\mathrm{Diff}_+(\mathbf{C})$}
\begin{Notation}
$\mathrm{Diff}_{\mathbb{Z}}(\tilde{\mathbf{C}})$ is the group of diffeomorphisms of $\tilde{\mathbf{C}}$ that commute with $\kappa$. Let $\mathrm{Diff}_+(\mathbf{C})$ be the group of orientation-preserving diffeomorphisms of $\mathbf{C}$.
\end{Notation}
The group $\mathrm{Diff}_{\mathbb{Z}}(\tilde{\mathbf{C}})$ is the universal cover of $\mathrm{Diff}_+(\mathbf{C})$. It acts on $\gamma \in \mathcal{D}^G_n(\mathbf{C})$ by
\[
(F \cdot \gamma)(x) = \gamma(F^{-1}(x)).
\]
Moreover, $\mathrm{Diff}_+(\mathbf{C})$ acts on $r$-densities by pullback/pushforward. In local coordinates,
\[
F^*: \ \phi(x)\,|\partial x|^r \longmapsto \big(F'(x)\big)^r\,\phi(F(x))\,|\partial x|^r.
\]
Infinitesimally, for $v=h(x)\,\frac{d}{dx}\in\mathrm{Vect}(\mathbf{C})$,
\[
\mathcal{L}_v\big(\phi\,|\partial x|^r\big)=\big(h\,\phi'+r\,h'\phi\big)\,|\partial x|^r.
\]
This induces an action on $L\in \mathcal{R}^G_n(\mathbf{C})$ by
\[
F\cdot L=F_* \circ L \circ F^*,\qquad F\in \mathrm{Diff}_+(\mathbf{C}),
\]
and the infinitesimal action is $[\mathcal{L}_v,L]$.
\begin{Lemma}
Let $L \in \mathcal{R}_n(\mathbf{C})$ be an $n$-th order operator of the form
\[
L=\frac{d^n}{d\theta^n}+\cdots+a_1(\theta)\frac{d}{d\theta}+a_0(\theta).
\]
If $F^{-1}\in \mathrm{Diff}_+(\mathbf{C})$ acts on $L$, then the transformed operator
\[
\tilde{L}=\frac{d^n}{d\theta^n}+\cdots+\tilde{a}_1(\theta)\frac{d}{d\theta}+\tilde{a}_0(\theta)
\]
satisfies the following transformation rules:
\begin{itemize}
    \item For $n=2$,
    \[
    \tilde{a}_0=F^*a_0+\mathcal{S}(F).
    \]
    \item For $n=3$,
    \[
    \tilde{a}_1=(F')^2\,F^*a_1+2\,\mathcal{S}(F), \qquad
    \tilde{a}_0=(F')^3\,F^*a_0+F'F''\,F^*a_1+\mathcal{S}(F)',
    \]
\end{itemize}
where the Schwarzian derivative of $F$ is
\[
\mathcal{S}(F)=\frac{F'''}{F'}-\frac{3}{2}\left(\frac{F''}{F'}\right)^2.
\]
\end{Lemma}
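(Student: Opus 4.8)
The plan is to compute the transformation rule directly by expressing everything in terms of $F^*$ and its effect on the density bundles, then extracting the lower-order coefficients. Recall that the action is $F\cdot L=F_*\circ L\circ F^*$, equivalently $\tilde L=(F^{-1})^*\circ L\circ F^*$ where $F^*$ acts on $\tfrac{1-n}{2}$-densities and $(F^{-1})^*=F_*$ acts on $\tfrac{1+n}{2}$-densities. First I would record the scalar conjugation factors coming from the density weights: writing $\mu=\phi\,|\partial\theta|^{(1-n)/2}$, the pullback $F^*$ multiplies $\phi\circ F$ by $(F')^{(1-n)/2}$, and the output density has weight $(1+n)/2$. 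So the whole operation conjugates $L$ by the chain rule $\tfrac{d}{d\theta}\mapsto \tfrac{1}{F'}\tfrac{d}{d\theta}$ together with these two scalar factors. The leading normalization $(F')^{(1-n)/2}$ on the input and $(F')^{(1+n)/2}$ on the output is exactly arranged so that the principal symbol remains $1$; this is the sanity check that pins down why the half-integer weights appear.

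Next I would set $t=\tfrac{1}{F'}\tfrac{d}{d\theta}$ and expand the conjugated operator $(F')^{-(1+n)/2}\,L\,(F')^{(1-n)/2}$, collecting coefficients of $\tfrac{d^n}{d\theta^n},\dots$ after substituting the chain rule. The cleanest route is to treat the two cases $n=2$ and $n=3$ separately, since the statement only asks for these. For $n=2$ with $L=\tfrac{d^2}{d\theta^2}+a_0$, one computes $(F')^{-3/2}(\tfrac{d^2}{d\theta^2}+a_0)(F')^{-1/2}$ and reads off the new zeroth-order term; the purely geometric part (independent of $a_0$) reorganizes into $-\tfrac12\mathcal S(F^{-1})$ or $\mathcal S(F)$ after applying the cocycle identity for the Schwarzian. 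For $n=3$ with $L=\tfrac{d^3}{d\theta^3}+a_1\tfrac{d}{d\theta}+a_0$, the same expansion produces $\tilde a_1$ and $\tilde a_0$; the coefficient of $\tfrac{d}{d\theta}$ picks up $(F')^2F^*a_1$ plus a universal multiple of $\mathcal S(F)$, and the constant term picks up the weighted transforms of $a_0,a_1$ together with $\mathcal S(F)'$.

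The main obstacle is the bookkeeping of the Schwarzian: the raw computation naturally yields expressions in $F',F'',F''',F''''$ and in the derivatives of $a_i$, and one must repeatedly use the chain-rule identity $\mathcal S(F^{-1})\circ F=-(F')^2\,\mathcal S(F)$ (the Schwarzian cocycle/inversion relation) to convert between $\mathcal S(F)$ and $\mathcal S(F^{-1})$ and to match the signs and factors in the stated formulas. I would organize this by first writing the intermediate result for $L$ acted on by $F^{-1}$ (so that the answer is expressed through $\mathcal S(F)$ rather than $\mathcal S(F^{-1})$, matching the statement's hypothesis that $F^{-1}$ acts), and then verifying the cocycle identity handles the $n=3$ cross-term $F'F''\,F^*a_1$ and the derivative term $\mathcal S(F)'$ correctly. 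The identity $\tilde a_1=(F')^2F^*a_1+2\mathcal S(F)$ should also be checked against the known $n=2$ Virasoro formula $\tilde a_0=F^*a_0+\mathcal S(F)$ as a consistency test, and against the self-adjointness/Drinfeld-Sokolov constraints to confirm the normalization of the factor $2$.
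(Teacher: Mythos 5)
The paper states this lemma without any proof at all --- it is quoted as a classical fact (the transformation law for Sturm--Liouville and third-order operators from Ovsienko--Tabachnikov, the reference cited throughout Section~2), so there is no in-paper argument to compare your proposal against. Your strategy is the standard and correct one for establishing it: conjugate $L$ by the density factors $(F')^{(1-n)/2}$ and $(F')^{-(1+n)/2}$, substitute the chain rule for $\tfrac{d}{d\theta}$, and collect coefficients, recognizing the Schwarzian in the $a$-independent part. Two cautions. First, your opening identification $\tilde L=(F^{-1})^*\circ L\circ F^*=F_*\circ L\circ F^*$ is the action of $F$, not of $F^{-1}$ as the statement requires; since $F^{-1}\cdot L=F^*\circ L\circ F_*$, the computation that lands directly on $\mathcal S(F)$ (rather than $\mathcal S(F^{-1})$) is the one where the new solutions are the pullbacks $F^*u_i=(F')^{(n-1)/2}\,(u_i\circ F)$ --- you flag this direction issue later and plan to fix it with the cocycle identity $\mathcal S(F^{-1})\circ F=-(F')^{2}\mathcal S(F)$, which does work, but it is cleaner to set up the correct direction from the start. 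Second, for a purely computational lemma the proposal is only a plan: the $n=3$ expansion (where the cross-term $F'F''\,F^*a_1$ and the $\mathcal S(F)'$ term arise) still has to be carried out explicitly, and you should be prepared for the overall normalization of the Schwarzian coefficient to depend on the chosen convention for $\mathcal S$; your suggested consistency checks against the $n=2$ Virasoro formula are the right way to pin that down.
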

\begin{Notation}
For $\gamma\in \mathcal{D}_n^G(\mathbf{C})$, let $\big(\mathrm{Diff}_{\mathbb{Z}}(\tilde{\mathbf{C}})\big)_\gamma$ be its stabilizer.
\end{Notation}
\begin{Lemma}
If $F \in \big(\mathrm{Diff}_{\mathbb{Z}}(\tilde{\mathbf{C}})\big)_\gamma$, then either $F$ has no fixed points on $\tilde{\mathbf{C}}$, or $F = \mathrm{Id}$.
\end{Lemma}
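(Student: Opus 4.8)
The plan is to reinterpret the stabilizer condition geometrically and then run an open--closed argument on the set of fixed points. If $F \in \big(\mathrm{Diff}_{\mathbb{Z}}(\tilde{\mathbf{C}})\big)_\gamma$, then by definition $\gamma \circ F^{-1} = \gamma$, equivalently $\gamma \circ F = \gamma$. Write $\mathrm{Fix}(F) = \{x \in \tilde{\mathbf{C}} : F(x) = x\}$. This set is automatically closed since $F$ is continuous, and $\tilde{\mathbf{C}}$ is connected by hypothesis; hence it suffices to prove that $\mathrm{Fix}(F)$ is \emph{open}, for then either $\mathrm{Fix}(F) = \varnothing$ (so $F$ has no fixed points) or $\mathrm{Fix}(F) = \tilde{\mathbf{C}}$ (so $F = \mathrm{Id}$).

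The key observation is that non-degeneracy forces $\gamma$ to be an immersion. Indeed, for any lift $\Gamma$ of $\gamma$ to $\mathbb{R}^n$, the equivalent non-degeneracy condition $\det\big(\Gamma(x), \Gamma'(x), \ldots, \Gamma^{(n-1)}(x)\big) \neq 0$ implies in particular that $\Gamma(x)$ and $\Gamma'(x)$ are linearly independent at every $x$ (here $n \geq 2$). This is precisely the statement that $\Gamma(x) \wedge \Gamma'(x) \neq 0$, i.e. that the tangent map of $\gamma = [\Gamma] : \tilde{\mathbf{C}} \to \mathbb{RP}^{\,n-1}$ is nonzero at $x$. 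Since $\tilde{\mathbf{C}}$ is one-dimensional, $\gamma$ is therefore an immersion, and immersions of $1$-manifolds are local embeddings; in particular $\gamma$ is locally injective.

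Next I would exploit this local injectivity at a fixed point to prove openness. Suppose $x_0 \in \mathrm{Fix}(F)$, and choose an open neighborhood $U$ of $x_0$ on which $\gamma|_U$ is injective. Replacing $U$ by $U \cap F^{-1}(U)$ (an open neighborhood of $x_0$, since $F$ is continuous and $F(x_0) = x_0 \in U$), I may assume that both $x$ and $F(x)$ lie in $U$ for every $x \in U$. For such $x$, the relation $\gamma(F(x)) = \gamma(x)$ together with the injectivity of $\gamma|_U$ forces $F(x) = x$. Hence $U \subseteq \mathrm{Fix}(F)$, so $\mathrm{Fix}(F)$ is open, and the connectedness argument above completes the proof.

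The only steps requiring care are the translation of the algebraic non-degeneracy condition into the geometric statement that $\gamma$ is an immersion, and the matching of neighborhoods so that $x$ and $F(x)$ both lie in the region where $\gamma$ is injective; once these are in place the conclusion is immediate. I would note that one could alternatively argue by differentiating $\gamma \circ F = \gamma$ repeatedly to produce an upper-triangular change of osculating frame $\mathcal{F}(F(x)) = \mathcal{F}(x)\,T(x)$ and evaluating at $x_0$, but the immersion argument is cleaner and avoids any computation with higher derivatives.
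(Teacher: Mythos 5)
Your proposal is correct and follows essentially the same route as the paper: both use that non-degeneracy makes $\gamma$ an immersion, hence locally injective, and then run an open--closed argument on the fixed-point set, forcing $F(x)=x$ near any fixed point. Your version is slightly tidier in matching the neighborhood (via $U\cap F^{-1}(U)$) and in spelling out why $\gamma$ is an immersion, but the underlying argument is the same.
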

\begin{proof}
Assume $\mathbf{C}=S^1$ and $\tilde{\mathbf{C}}=\mathbb{R}$. Since $\gamma$ is an immersion, for each $x\in\mathbb{R}$ there exists an open interval $I_x$ on which $\gamma$ is an embedding. If $\alpha\in I_x$ and $\beta=F(\alpha)\in I_x$, then
\[
\gamma(\beta)=\gamma(F(\alpha))=\gamma(\alpha).
\]
Injectivity on $I_x$ yields $\beta=\alpha$. Hence $I_x\cap F(I_x)=\{\alpha\in I_x\mid F(\alpha)=\alpha\}$ is open and closed in $I_x$, so either $I_x\cap F(I_x)=I_x$ or $=\varnothing$. Therefore the fixed point set
\[
S=\{\alpha\in\mathbb{R}\mid F(\alpha)=\alpha\}
\]
is open and closed. If $S\neq\varnothing$, then $S=\mathbb{R}$ and $F=\mathrm{Id}$.
\end{proof}

\begin{Lemma}
$\big(\mathrm{Diff}_{\mathbb{Z}}(\tilde{\mathbf{C}})\big)_\gamma$ acts properly discontinuously on $\tilde{\mathbf{C}}$.
\end{Lemma}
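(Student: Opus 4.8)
The plan is to reduce everything to the combinatorics of a single fiber of $\gamma$ and to prove that $S:=\big(\mathrm{Diff}_{\mathbb{Z}}(\tilde{\mathbf{C}})\big)_\gamma$ is either trivial or infinite cyclic; proper discontinuity will then follow from a uniform displacement estimate. As in the preceding lemmas I would take $\mathbf{C}=S^1$, $\tilde{\mathbf{C}}=\mathbb{R}$ and $\kappa(x)=x+1$, so that every $F\in S$ is an increasing diffeomorphism of $\mathbb{R}$ commuting with $\kappa$ and satisfying $\gamma\circ F=\gamma$. By the preceding lemma each $F\in S\ssminus\{\mathrm{Id}\}$ is fixed-point-free.

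First I would fix a point $p=\gamma(x_0)$ and consider the fiber $\Sigma=\gamma^{-1}(p)\subset\mathbb{R}$. Since $\gamma$ is an immersion it is a local embedding, so $\Sigma$ is a closed discrete subset of $\mathbb{R}$; as an ordered set it is therefore isomorphic to a finite set, to $\mathbb{N}$, or to $\mathbb{Z}$. Each $F\in S$ preserves $\Sigma$ (because $\gamma\circ F=\gamma$) and preserves its order (because $F$ is increasing), which gives a restriction homomorphism $\rho:S\to\mathrm{Aut}(\Sigma,\le)$. The target is trivial when $\Sigma$ is finite or order-isomorphic to $\mathbb{N}$, and is infinite cyclic, generated by the unit shift, when $\Sigma\cong\mathbb{Z}$. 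The key point is that $\rho$ is injective: if $\rho(F)=\mathrm{Id}$ then $F$ fixes a point of $\Sigma$, so the preceding lemma forces $F=\mathrm{Id}$. Hence $S$ embeds into a group that is trivial or $\cong\mathbb{Z}$, and so $S$ itself is trivial or infinite cyclic.

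If $S$ is trivial there is nothing to prove. Otherwise I would write $S=\langle F_0\rangle$ and, replacing $F_0$ by its inverse if necessary, assume $F_0(x)>x$ for all $x$. Because $F_0$ commutes with $\kappa$, the displacement $x\mapsto F_0(x)-x$ is continuous and $1$-periodic, hence attains a positive minimum $c=\min_{x}\big(F_0(x)-x\big)>0$; iterating gives $F_0^{\,k}(x)\ge x+kc$ for $k\ge 1$, and symmetrically for $k<0$. Given a compact $K\subseteq[-N,N]$, the condition $F_0^{\,k}K\cap K\neq\varnothing$ produces $x\in K$ with $F_0^{\,k}(x)\in K$, whence $|k|\,c\le\big|F_0^{\,k}(x)-x\big|\le 2N$ and thus $|k|\le 2N/c$. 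Only finitely many $k$ survive, so the action is properly discontinuous.

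The main obstacle is the second step. A priori $S$ sits inside an enormous diffeomorphism group, and there is no hope of a uniform lower bound on displacements over arbitrary elements, which is exactly what the final estimate needs. The essential idea is to convert the problem into rigid discrete combinatorics by passing to the fiber $\Sigma$: there freeness of the action (the preceding lemma) upgrades the order-preserving restriction into an injection of $S$ into $\mathrm{Aut}(\mathbb{Z},\le)\cong\mathbb{Z}$, forcing cyclicity. I would be careful to treat the finite and $\mathbb{N}$ cases for $\Sigma$ (where $\mathrm{Aut}(\Sigma,\le)$ is trivial and hence $S$ is trivial) alongside the bi-infinite case. Once cyclicity is secured, periodicity coming from commutation with $\kappa$ makes the displacement estimate immediate.
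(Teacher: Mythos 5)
Your argument is correct, but it takes a genuinely different route from the paper's, whose proof is a one-line deduction from the two preceding lemmas: for each $x$ one picks an interval $I_x$ on which the immersion $\gamma$ is an embedding, and the computation already carried out there shows that $I_x\cap F(I_x)$ consists only of fixed points of $F$, hence is empty for every $F\neq\mathrm{Id}$ in the stabilizer. That immediately gives the local form of proper discontinuity (each point has a neighbourhood disjoint from all of its nontrivial translates) with no global input. Your proof instead pins down the group structure of the stabilizer --- injecting it into the order-automorphisms of a fiber $\gamma^{-1}(p)$, which is closed and discrete because $\gamma$ is an immersion, and using freeness (the preceding lemma) to get injectivity, so the stabilizer is trivial or infinite cyclic --- and then extracts a uniform displacement bound from the $1$-periodicity of $x\mapsto F_0(x)-x$. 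This buys strictly more: you obtain the compact-set version of proper discontinuity (only finitely many $F$ with $F(K)\cap K\neq\varnothing$ for each compact $K$), which the paper's local statement does not formally imply in general (the standard counterexample being $\mathbb{Z}$ acting on $\mathbb{R}^2\setminus\{0\}$ by $(x,y)\mapsto(2x,y/2)$), together with the structural fact that the stabilizer is at most infinite cyclic. The cost is length and reliance on the order structure of $\tilde{\mathbf{C}}\cong\mathbb{R}$. Two minor points, neither of which affects the argument: the fiber can also be order-isomorphic to $\mathbb{Z}_{\leq 0}$ (you list only finite, $\mathbb{N}$, and $\mathbb{Z}$), though its order-automorphism group is still trivial; and the fact that every element of $\mathrm{Diff}_{\mathbb{Z}}(\mathbb{R})$ is increasing deserves a word --- it holds because a decreasing diffeomorphism of $\mathbb{R}$ cannot commute with $x\mapsto x+1$.
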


\begin{proof}
As above, for each $x$ pick an interval $I_x$ on which $\gamma$ is an embedding. If $F\neq \mathrm{Id}$ is in the stabilizer, then $I_x\cap F(I_x)=\varnothing.$
\end{proof}

\begin{Proposition}
The map $\mathcal{D}_p:\mathcal{D}^G_n(\mathbf{C})\to \mathcal{R}^G_n(\mathbf{C})$ is $\mathrm{Diff}_{\mathbb{Z}}(\tilde{\mathbf{C}})$-equivariant.
\end{Proposition}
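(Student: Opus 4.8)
The plan is to characterize the operator $\mathcal{D}_p(\gamma)$ intrinsically by its solution space, and then to compare the solution spaces of $\mathcal{D}_p(F\cdot\gamma)$ and $\bar F\cdot \mathcal{D}_p(\gamma)$, where $\bar F\in\mathrm{Diff}_+(\mathbf{C})$ denotes the diffeomorphism induced by $F\in\mathrm{Diff}_{\mathbb{Z}}(\tilde{\mathbf{C}})$ (well defined because $F$ commutes with $\kappa$), and where $\mathrm{Diff}_{\mathbb{Z}}(\tilde{\mathbf{C}})$ acts on the target through this projection. The essential reduction is a uniqueness statement: if $(v_1,\dots,v_n)$ is a fundamental system with $W[v_1,\dots,v_n]=1$, then $L(u)=(-1)^nW[u,v_1,\dots,v_n]$ is the \emph{only} order-$n$ operator with principal symbol $1$ whose kernel is $\mathrm{span}(v_1,\dots,v_n)$. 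Indeed, two such operators differ by an operator $P=\sum_{j\le n-1}b_j\,\partial^j$ of order $\le n-1$ annihilating an $n$-dimensional space; at each point the covector $(b_0,\dots,b_{n-1})$ then annihilates all jets $(v_i,v_i',\dots,v_i^{(n-1)})$, which span $\mathbb{R}^n$ because $W[v_1,\dots,v_n]\neq 0$, forcing $P=0$.

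First I would fix a lift $(u_1,\dots,u_n)$ of $\gamma$ to $\mathbb{R}^n$ with $W[u_1,\dots,u_n]=1$, so that $\mathcal{D}_p(\gamma)=L$ is the operator above. I would then verify that $(F_*u_1,\dots,F_*u_n)$, with $F_*=(F^{-1})^*$ acting on $\tfrac{1-n}{2}$-densities, is a fundamental system lifting $F\cdot\gamma$ with Wronskian $1$. For the lifting, writing each density in a local coordinate and using the pullback rule $F^*\!:\phi\,|\partial x|^r\mapsto (F')^r(\phi\circ F)\,|\partial x|^r$, the common density factor $\bigl((F^{-1})'\bigr)^{\frac{1-n}{2}}$ cancels in projective coordinates, so the projective class of $(F_*u_1,\dots,F_*u_n)$ at $x$ is $\gamma(F^{-1}(x))=(F\cdot\gamma)(x)$. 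For the normalization I would invoke the naturality of the Wronskian as an intrinsically defined multidifferential operator on densities: when $r=\tfrac{1-n}{2}$ it lands in weight-$0$ densities (functions), so equivariance gives $W[F_*u_1,\dots,F_*u_n]=F_*\,W[u_1,\dots,u_n]=F_*(1)=1$.

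Next I would match the two operators. By the previous step and the uniqueness characterization, $\mathcal{D}_p(F\cdot\gamma)$ is the unique monic order-$n$ operator with kernel $\mathrm{span}(F_*u_1,\dots,F_*u_n)$. On the other side, lifting $\bar F\cdot L=\bar F_*\circ L\circ\bar F^*$ to $\tilde{\mathbf{C}}$ gives $F_*\circ\tilde L\circ F^*$, where $\tilde L$ is the lift of $L$ with solution space $\mathrm{span}(u_i)$; since $F_*$ is invertible, $(F_*\tilde L F^*)v=0$ iff $F^*v\in\mathrm{span}(u_i)$ iff $v\in\mathrm{span}(F_*u_i)$, so $\bar F\cdot L$ has kernel $\mathrm{span}(F_*u_i)$ and principal symbol $1$ as well. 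Uniqueness then forces $\mathcal{D}_p(F\cdot\gamma)=\bar F\cdot \mathcal{D}_p(\gamma)$, the asserted equivariance. The argument is uniform in $G$, using only the Wronskian normalization and the kernel rather than the trace or self-duality conditions; I would separately note that $F$ commuting with $\kappa$ yields $(F\cdot\gamma)(\kappa x)=h\cdot(F\cdot\gamma)(x)$, so $F\cdot\gamma$ remains in $\mathcal{D}_n^G(\mathbf{C})$ with unchanged monodromy.

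I expect the main obstacle to be the bookkeeping of the second paragraph: correctly matching the $\mathrm{Diff}_{\mathbb{Z}}(\tilde{\mathbf{C}})$-action on curves over $\tilde{\mathbf{C}}$ with the $\mathrm{Diff}_+(\mathbf{C})$-action on operators over $\mathbf{C}$, and confirming that the Wronskian of densities is genuinely diffeomorphism-equivariant so that the normalization $W\equiv 1$ survives the pushforward. Once the equivariance of $W$ and the pointwise spanning of the jets are in hand, the operator-uniqueness step is purely formal.
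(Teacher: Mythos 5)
Your proof is correct. Note that the paper does not actually prove this proposition: it simply cites \cite[Thm.~2.2.3]{ovsienko2004projective}, so your argument supplies a self-contained proof where the paper outsources one. Your strategy---characterize $\mathcal{D}_p(\gamma)$ as the \emph{unique} monic order-$n$ operator killing a Wronskian-normalized lift, then show that both $\mathcal{D}_p(F\cdot\gamma)$ and $\bar F\cdot\mathcal{D}_p(\gamma)$ satisfy this characterization with the same kernel $\mathrm{span}(F_*u_i)$---is the standard one and all the individual steps check out: the pointwise-spanning argument for uniqueness is right (the jets $(v_i,\dots,v_i^{(n-1)})$ span $\mathbb{R}^n$ exactly because $W\neq 0$), the naturality of the Wronskian as a multidifferential operator on $\tfrac{1-n}{2}$-densities landing in weight $0$ is exactly what the paper's own setup of $W$ as an intrinsic operator $|\Omega|^r_{\tilde{\mathbf{C}}}\otimes\cdots\to|\Omega|^{n(r+\frac{n-1}{2})}_{\tilde{\mathbf{C}}}$ licenses, and the kernel computation $\ker(F_*\tilde L F^*)=F_*\ker\tilde L$ together with invariance of the (scalar, constant) principal symbol closes the argument. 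Two small points you gloss over but should record: (i) for the statement to make sense for $G=\mathrm{PSp}(n)$ or $\mathrm{PSO}(n)$ you need the actions to preserve $\mathcal{D}^G_n(\mathbf{C})$ and $\mathcal{R}^G_n(\mathbf{C})$, which follows from the naturality of the formal adjoint, $(F_*LF^*)^*=F_*L^*F^*$ (the integration pairing is diffeomorphism-invariant), combined with the paper's proposition identifying the dual curve's operator with $(-1)^nL^*$; (ii) you should say explicitly that an operator on $\mathbf{C}$ is determined by its lift to $\tilde{\mathbf{C}}$, so that comparing the lifted operators suffices. Neither is a real gap.
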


\begin{proof}
This is \cite[Thm.~2.2.3]{ovsienko2004projective}.
\end{proof}
\section{Constructions via Drinfeld-Sokolov method}

In this section, we first discuss a Morita equivalence between a symplectic groupoid integrating the space of connections on a principal $G$-bundle $Q \rightarrow \mathbf{C}$ and the quasi-symplectic groupoid that integrates the Cartan-Dirac structure on $\widetilde{G}$, with the path space $\mathcal{P}(Q)$ as a bimodule. Then, using the Drinfeld-Sokolov reduction \cite{drinfeld1985lie}, We reduce this Morita equivalence to one between the symplectic groupoid integrating the Adler-Gelfand-Dikii Poisson structure on $\mathcal{R}^G_n(\mathbf{C})$ and the quasi-symplectic groupoid integrating the Dirac structure on $Y_n(\mathbf{C})$. The reference for the first part is \cite{alekseev2009atiyah,alekseev2024coadjoint}. For the second part, see \cite{alekseev2024coadjoint,drinfeld1985lie}.

\subsection{Adler-Gelfand-Dikii Poisson structure}
This part introduces the \emph{Adler-Gelfand-Dikii} Poisson structure on $\mathcal{R}^G_n(\mathbf{C})$. The main reference is \cite[App.~A.7]{ovsienko2004projective}. Assume that $\theta$ is a coordinate on $\mathbf{C}$ and
\[
L \;=\; \frac{d^n}{d\theta^n} \;+\; a_{n-1}(\theta)\frac{d^{n-1}}{d\theta^{n-1}} \;+\; \cdots \;+\; a_0(\theta),
\]
is an element of $\mathcal{R}_n(\mathbf{C})$, where each $a_i \in C^\infty(\mathbf{C},\mathbb{R})$ is a smooth function. Suppose
\[
X \;=\; \sum_{j=1}^{n} X_j(\theta) \Bigl(\frac{d}{d\theta}\Bigr)^{-j}.
\]
Then, using the Leibniz rule,
\[
\Bigl(\frac{d}{d\theta}\Bigr)^{-1} \circ a(\theta) \;=\; a(\theta)\Bigl(\frac{d}{d\theta}\Bigr)^{-1} \;+\; \sum_{i=1}^\infty (-1)^i a^{(i)}(\theta) \Bigl(\frac{d}{d\theta}\Bigr)^{-i-1}.
\]
We can define
\[
X \cdot L \;=\; \sum_{m\in\mathbb{Z}} p_m \Bigl(\frac{d}{d\theta}\Bigr)^m.
\]
Using $X$, define a functional $\ell_X$ on $\mathcal{R}_n(\mathbf{C})$ by
\[
\ell_X(L) \;=\; \operatorname{res}(X \cdot L) \;=\; \int_{\mathbf{C}} p_{-1}(\theta)\, d\theta.
\]
It turns out that every continuous linear functional on $\mathcal{R}_n(\mathbf{C})$ is of the form $\ell_X$ for some pseudodifferential operator $X$. The Hamiltonian operator of the Adler-Gelfand-Dikii Poisson structure
\[
\pi^\sharp \colon T^*\mathcal{R}_n(\mathbf{C}) \longrightarrow T\,\mathcal{R}_n(\mathbf{C})
\]
is given by
\[
\ell_X \longmapsto V_X, \qquad V_X(L) \;=\; L(XL)_+ - (LX)_+ L \quad \text{for } L \in \mathcal{R}_n(\mathbf{C}).
\]
For a classical Lie group $G$, we restrict to those pseudodifferential symbols $X$ with the property that
\[
\frac{d^n}{d\theta^n} + V_X(L) \;\in\; \mathcal{R}^G_n(\mathbf{C}).
\]

\begin{Proposition}[\cite{ovsienko2004projective}, App.~A]
Symplectic leaves of the Adler-Gelfand-Dikii Poisson structure are in one-to-one correspondence with homotopy classes of non-degenerate curves with fixed monodromy.
\end{Proposition}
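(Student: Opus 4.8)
The plan is to transport the statement to the space of solution curves and to recognize the symplectic leaves as the level sets of the monodromy. Via the principal $G$-bundle $\mathcal{D}_p:\mathcal{D}_n^G(\mathbf{C})\to\mathcal{R}_n^G(\mathbf{C})$ we identify $\mathcal{R}_n^G(\mathbf{C})\cong\mathcal{D}_n^G(\mathbf{C})/G$, and since $\mathcal{D}_q$ is equivariant for the conjugation action it descends to a map
\[
\bar{\mu}:\mathcal{R}_n^G(\mathbf{C})\longrightarrow G/\!\sim
\]
to the set of conjugacy classes, sending $L$ to the class of its monodromy. In these terms the assertion becomes: the symplectic leaf through $L$ is the connected component of $\bar{\mu}^{-1}(\bar{\mu}(L))$ containing $L$, and these components are exactly the homotopy classes of non-degenerate curves with the prescribed monodromy.

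The first step is to show that $\bar{\mu}$ is a Casimir, so that each leaf lies in a single fiber. Equivalently, the monodromy conjugacy class is constant along the flow of every Hamiltonian vector field $V_X(L)=L(XL)_+-(LX)_+L$. To see this I would rewrite the scalar operator $L$ as a first-order system $\partial_\theta+A_L$ in companion form on the trivial rank-$n$ bundle over $\tilde{\mathbf{C}}$, for which the monodromy matrix $M_u$ is precisely the holonomy. A direct computation shows that the deformation $\dot{A}=V_X(L)$ is, after the companion gauge fixing, tangent to the gauge-transformation orbit, so that $\dot{M_u}=[\xi,M_u]$ for some $\xi\in\mathrm{Lie}(G)$; hence the conjugacy class is preserved. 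This is the isospectrality of the AGD (generalized KdV) hierarchy, and it places each leaf inside a fiber of $\bar{\mu}$.

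The second step, and the main obstacle, is the reverse inclusion at the infinitesimal level: that the image of the Hamiltonian operator $\pi^\sharp_L$ fills the whole tangent space $\ker d\bar{\mu}_L$, so that the leaf is open in its fiber. Here I would invoke the Drinfeld-Sokolov description of $\mathcal{R}_n^G(\mathbf{C})$ as a Marsden-Weinstein reduction of the space of connections $\mathcal{A}(Q)$ by the gauge group \cite[Sec.~8]{drinfeld1985lie}: the leaves of the reduced Poisson structure are the symplectic quotients at fixed holonomy, whose tangent spaces are exactly the annihilators of the differential of the holonomy map. Since $\bar{\mu}$ is induced by the holonomy, this identifies $\operatorname{im}\pi^\sharp_L$ with $\ker d\bar{\mu}_L$. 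The only real content is the nondegeneracy of the reduced symplectic form on each holonomy fiber; a direct route would instead verify that the pairing induced by the AGD bivector on $T_L\mathcal{R}_n^G(\mathbf{C})/\ker d\bar{\mu}_L$ is nondegenerate, yielding the rank count $\operatorname{rank}\pi_L=\dim\ker d\bar{\mu}_L$.

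Granting these two steps, every fiber of $\bar{\mu}$ is a disjoint union of symplectic leaves that are open, hence also closed, in the fiber; so the leaves coincide with the connected components of the fibers. Finally I would identify these components with homotopy classes of curves. Fixing a representative $h$ of a conjugacy class $C$, the restriction of $\mathcal{D}_p$ gives $\bar{\mu}^{-1}(C)\cong \mathcal{D}_q^{-1}(h)/Z_G(h)$; since $\mathcal{D}_q$ (equivalently $\mathcal{D}_q'$, a submersion by Proposition~\ref{Proposition 2.2}) has manifold fibers, the path-components of $\mathcal{D}_q^{-1}(h)$ are precisely the homotopy classes of non-degenerate curves with fixed monodromy $h$. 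For regular $h$ the centralizer $Z_G(h)$ is connected, so these descend without change to the components of $\bar{\mu}^{-1}(C)$, and the non-regular case follows by applying the same argument stratum by stratum. Combining this with the previous step gives the stated one-to-one correspondence between symplectic leaves and homotopy classes of non-degenerate curves with fixed monodromy.
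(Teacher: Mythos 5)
The paper does not actually prove this statement: it is imported verbatim as a citation to \cite{ovsienko2004projective}, App.~A (the classification going back to Khesin--Ovsienko \cite{ovsienko1990symplectic}), so there is no in-text argument to compare against. Your overall strategy --- monodromy class as a Casimir, openness of the leaves in the monodromy fibers via the Drinfeld--Sokolov realization, then identification of fiber components with homotopy classes --- is the standard route in the literature, and the first step (isospectrality of the AGD flows) is fine.

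Two steps as written do not go through. First, the claim that $Z_G(h)$ is connected for regular $h$ is false for $G=\mathrm{PSL}(n,\mathbb{R})$ with $n\ge 3$: for $h$ regular semisimple with distinct real eigenvalues the centralizer is the full diagonal torus, which has $2^{n-1}$ components in $\mathrm{SL}(n,\mathbb{R})$ and still more than one component after dividing by the (at most order two) center. Hence passing from $\pi_0\bigl(\mathcal{D}_q^{-1}(h)\bigr)$ to $\pi_0\bigl(\bar{\mu}^{-1}(C)\bigr)$ may identify distinct homotopy classes, and you must either show that the $\pi_0(Z_G(h))$-action on $\pi_0\bigl(\mathcal{D}_q^{-1}(h)\bigr)$ is trivial or reinterpret ``homotopy class with fixed monodromy'' as a $G$-orbit in $Y_n(\mathbf{C})$; this bookkeeping is precisely what the cited source has to do. Second, the heart of the matter --- that $\operatorname{im}\pi^\sharp_L$ equals $\ker d\bar{\mu}_L$, equivalently that the reduced form on a fixed-holonomy stratum is weakly nondegenerate --- is asserted rather than proved: in this infinite-dimensional setting there is no rank count, and the ``open hence closed'' argument presupposes a leaf theory (Hamiltonian orbits filling an integrable characteristic distribution) that must be supplied, e.g.\ by exhibiting the leaves as $\mathrm{Gau}_0(Q)$-orbits intersected with the Drinfeld--Sokolov slice $\Psi^{-1}(\Lambda)$. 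Note also that your step two only controls the conjugacy class of the monodromy in $G$; the refinement from conjugacy classes to homotopy classes is exactly the difference between $\mathrm{Gau}(Q)$- and $\mathrm{Gau}_0(Q)$-orbits, which your sketch conflates when it invokes ``the symplectic quotients at fixed holonomy.''
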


\subsection{Principal bundles over $\mathbf{C}$}
Let $G$ be a connected Lie group and $Q \to \mathbf{C}$ a principal $G$-bundle. Define
\[
\mathcal{P}(Q) \;=\; \{\tau \in \Gamma(\pi^*Q)\mid \exists\, h\in G \text{ such that } \kappa^*\tau = h\cdot \tau\}.
\]
\begin{Notation}
We denote the space of connections on $Q$ by $\mathcal{A}(Q)$ and the group of gauge transformations by $\mathrm{Gau}(Q)$.
\end{Notation}
We have the diagram
\[
\begin{tikzcd}
& \mathcal{P}(Q) \arrow[ld, "\mathcal{P}_p"'] \arrow[rd, "\mathcal{P}_q"] & \\
\mathcal{A}(Q) & & G
\end{tikzcd}
\]
where $\mathcal{P}_q\colon \mathcal{P}(Q)\to G$ maps $\tau$ to $h$, and $\mathcal{P}_p\colon \mathcal{P}(Q)\to \mathcal{A}(Q)$ maps $\tau$ to the connection whose pullback has $\tau$ as a horizontal section. The maps $\mathcal{P}_p$ and $\mathcal{P}_q$ are quotient maps for the commuting actions of $\mathrm{Gau}(Q)$ and $G$ on $\mathcal{P}(Q)$, respectively. If $G$ is not simply connected, then $\mathrm{Gau}(Q)$ is not connected; in this case we use its identity component $\mathrm{Gau}_0(Q)$. We then have
\[
\begin{tikzcd}
& \mathcal{P}(Q) \arrow[ld, "\mathcal{P}_p"'] \arrow[rd, "\widetilde{\mathcal{P}_q}"] & \\
\mathcal{A}(Q) & & \widetilde{G}
\end{tikzcd}
\]
where $\widetilde{\mathcal{P}_q}$ is the lifted monodromy.

\subsection{Poisson structure on $\mathcal{A}(Q)$}
Let $\mathfrak{g}=\mathrm{Lie}(G)$. The affine space $\mathcal{A}(Q)$ has underlying linear space $\Omega^1(\mathbf{C}, Q\times_G\mathfrak{g})$. The group $\mathrm{Gau}(Q)$ acts on $\mathcal{A}(Q)$ by pullback. The infinitesimal action of $\mathfrak{gau}(Q)=\Omega^0(\mathbf{C}, Q\times_G\mathfrak{g})$ at $A\in\mathcal{A}(Q)$ is
\[
\partial_A \colon \Omega^0(\mathbf{C}, Q\times_G\mathfrak{g}) \to \Omega^1(\mathbf{C}, Q\times_G\mathfrak{g}).
\]
Choose an invariant metric $\langle\cdot,\cdot\rangle$ on $\mathfrak{g}$. We identify $\mathfrak{gau}^*(Q)=\Omega^1(\mathbf{C}, Q\times_G\mathfrak{g})$ as the smooth dual. The action of \(\mathrm{Gau}(Q)\) on \(\mathcal{A}(Q)\) is affine with underlying linear coadjoint action.

\begin{Remark}[\cite{alekseev2024coadjoint}, §2.2]\label{rem:affine}
Given an affine action of a Lie group $K$ on an affine space $E$, with the coadjoint action as the underlying linear $K$-action on $\mathfrak{k}^*$, there is a central extension
\[
0 \to \mathbb{R} \to \widehat{\mathfrak{k}} \to \mathfrak{k} \to 0
\]
such that $E \cong \widehat{\mathfrak{k}}^{*}_1$ (the level-one affine hyperplane). Moreover
\[
\widehat{\mathfrak{k}} = \operatorname{Aff}(E,\mathbb{R}), \qquad
[\widehat{X},\widehat{Y}](\mu) = -\langle \operatorname{ad}^*_X \mu, Y\rangle,
\]
for $\widehat{X},\widehat{Y}\in \operatorname{Aff}(E,\mathbb{R})$ and $X\in \operatorname{Hom}(E,\mathbb{R})=\mathfrak{k}$.
\end{Remark}

Applying Remark~\ref{rem:affine} with $E=\mathcal{A}(Q)$, we obtain the central extension
\[
0 \to \mathbb{R} \to \widehat{\mathfrak{gau}}(Q) \to \mathfrak{gau}(Q) \to 0,
\]
with Lie bracket
\[
[\widehat{\xi}_1,\widehat{\xi}_2](A) \;=\; \int_{\mathbf{C}} \langle \partial_A \xi_1, \xi_2\rangle,
\]
and $\mathcal{A}(Q) \cong \widehat{\mathfrak{gau}}^{*}_1(Q)$. Therefore, $\mathcal{A}(Q)$ has a natural Poisson structure \cite[Ex.~A.12]{alekseev2024coadjoint}. The moment map for the $\mathrm{Gau}(Q)$-action is the inclusion
\[
\widehat{\mathfrak{gau}}^{*}_1(Q) \hookrightarrow \widehat{\mathfrak{gau}}^*(Q).
\]

\subsection{The 2-form \texorpdfstring{$\varpi_{\mathcal{P}}$}{varpiP}}
There exists a natural $2$-form $\varpi_{\mathcal{P}}$ on $\mathcal{P}(Q)$ with the following properties.

\begin{Theorem}[\cite{alekseev2024coadjoint}, Thm.~5.3]\label{thm:varpiP}
\begin{enumerate}
\item $d\varpi_{\mathcal{P}} = \widetilde{\mathcal{P}_q}^{*}\eta$, where $\eta = -\tfrac{1}{12}\,\langle \theta^L,[\theta^L,\theta^L]\rangle$ is the Cartan $3$-form on $\widetilde{G}$.
\item $\varpi_{\mathcal{P}}$ is $G$-invariant, and its contractions with generating vector fields for the $G$-action satisfy
\[
\iota_{X_{\mathcal{P}}}\varpi_{\mathcal{P}} \;=\; -\tfrac{1}{2}\,\widetilde{\mathcal{P}_q}^{*}\langle \theta^L+\theta^R, X\rangle.
\]
\item \label{it:varpiP-c} $\varpi_{\mathcal{P}}$ is $\mathrm{Gau}(Q)$-invariant, and for $\xi\in \mathfrak{gau}(Q)$,
\[
\iota_{\xi_{\mathcal{P}}}\varpi_{\mathcal{P}} \;=\; -\,\mathcal{P}_p^{*}\langle dA,\xi\rangle.
\]
In fact, $\varpi_{\mathcal{P}}$ is $\mathrm{Aut}_+(Q)$-invariant.
\end{enumerate}
\end{Theorem}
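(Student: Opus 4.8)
The plan is to realize $\varpi_{\mathcal{P}}$ as a transgression of the Cartan $3$-form $\eta$ along the base $\mathbf{C}$, and then to read off the three properties from the explicit formula. Since every principal bundle with connected structure group over $S^1$ is trivializable, I would fix a trivialization of $Q$ and a basepoint, identifying $\tau\in\mathcal{P}(Q)$ with a quasi-periodic map $g\colon\tilde{\mathbf{C}}\to G$, $\kappa^*g=h\,g$, normalized at the basepoint. Under this identification $\mathcal{P}_p(\tau)$ is the connection $A=-dg\,g^{-1}$ and $\widetilde{\mathcal{P}_q}(\tau)$ is the lifted monodromy $\tilde h$. Writing a tangent vector at $\tau$ as a left-trivialized variation $\alpha=g^{-1}\delta g\in\Omega^0(\tilde{\mathbf{C}},\mathfrak{g})$, I define $\varpi_{\mathcal{P}}$ as the fiber integral $\int_{[0,1]}\mathrm{ev}^*\eta$ of $\eta$ under the evaluation map $\mathrm{ev}\colon[0,1]\times\mathcal{P}(Q)\to G$, $(x,\tau)\mapsto g(x)$, over one fundamental domain. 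Expanding $\mathrm{ev}^*\theta^L=\mu\,dx+\alpha$ with $\mu=g^{-1}\partial_x g$ and collecting the terms containing a single $dx$, this fiber integral reduces, after integration by parts using the Maurer-Cartan structure equation, to a formula of the shape
\[
\varpi_{\mathcal{P}}(\delta_1,\delta_2)=\tfrac12\int_{\mathbf{C}}\big(\langle\alpha_1,\partial_x\alpha_2\rangle-\langle\alpha_2,\partial_x\alpha_1\rangle\big)+(\text{boundary term}),
\]
the boundary term being the correction imposed by quasi-periodicity. This is precisely the canonical $2$-form attached to the ``connection plus section'' data in the Atiyah-algebroid framework of \cite{alekseev2009atiyah}.

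Property (1) then follows from Stokes' theorem for fiber integration. Because $\eta$ is closed, $d\!\int_{[0,1]}\mathrm{ev}^*\eta=\mathrm{ev}_1^*\eta-\mathrm{ev}_0^*\eta$, where $\mathrm{ev}_0,\mathrm{ev}_1$ are evaluation at the two endpoints of the fundamental domain. The endpoint at the basepoint is normalized to the identity and contributes nothing, while quasi-periodicity $g(x+1)=h\,g(x)$ together with the $\mathrm{Ad}$-invariance of $\eta$ identifies the remaining term with $\widetilde{\mathcal{P}_q}^*\eta$. Hence $d\varpi_{\mathcal{P}}=\widetilde{\mathcal{P}_q}^*\eta$.

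For properties (2) and (3) I would compute the contractions directly. The global $G$-action is constant left translation $g\mapsto c\,g$, whose generating vector field has $\alpha$ constant along $\tilde{\mathbf{C}}$; contracting the integral formula, the bulk integrand becomes a total $\partial_x$-derivative, so only the two endpoint evaluations survive and assemble into the symmetric combination $-\tfrac12\,\widetilde{\mathcal{P}_q}^*\langle\theta^L+\theta^R,X\rangle$, the two endpoints contributing the $\theta^L$ and $\theta^R$ terms respectively. This symmetric pairing is exactly the Cartan-Dirac moment-map relation, which is why $\mathcal{P}(Q)$ carries a quasi-symplectic rather than a genuinely symplectic structure. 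For the gauge action $g\mapsto g\,u$ with $u$ periodic, the generating field $\xi_{\mathcal{P}}$ corresponds to $\alpha=\mathrm{Ad}_{g^{-1}}\xi$; here the boundary terms cancel by periodicity of $\xi$, and the bulk contraction telescopes against the variation of $A=-dg\,g^{-1}$, yielding $-\mathcal{P}_p^*\langle dA,\xi\rangle$, which is the moment map for the gauge action relative to the structure on $\mathcal{A}(Q)$ described in Remark~\ref{rem:affine}.

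Finally, $G$-, $\mathrm{Gau}(Q)$-, and $\mathrm{Aut}_+(Q)$-invariance follow from the naturality of the construction: the integrand is built solely from the $\mathrm{Ad}$-invariant pairing $\langle\cdot,\cdot\rangle$ and the Maurer-Cartan forms, so it is unchanged under left and right translations and under conjugation, while $\int_{\mathbf{C}}$ is manifestly invariant under the reparametrizations coming from $\mathrm{Aut}_+(Q)$ since the integrand is a genuine density. The main obstacle is the bookkeeping in the contraction computations of the previous paragraph: extracting the exact coefficient $\tfrac12$ and the precise symmetric combination $\theta^L+\theta^R$ requires careful integration by parts using $d\theta^L=-\tfrac12[\theta^L,\theta^L]$ and a disciplined account of the two endpoint contributions, and it is exactly here that the quasi-Hamiltonian nature of the structure, as opposed to an ordinary Hamiltonian moment map, enters.
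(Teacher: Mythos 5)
The paper offers no proof of this statement: its ``proof'' is the citation to \cite[Thm.~5.3]{alekseev2024coadjoint} and \cite[\S 8.4]{alekseev2009atiyah}, so the only meaningful comparison is with the construction in those references. Your transgression approach --- realizing $\varpi_{\mathcal{P}}$ as a fiber integral of $\mathrm{ev}^*\eta$ over a fundamental domain (legitimate here, since $\mathbf{C}\cong S^1$ and $G$ is connected, so $Q$ is trivializable) and deriving (1)--(3) from Stokes' theorem for fiber integration together with explicit contractions --- is essentially the construction used there, so in spirit you are reconstructing the cited proof rather than proposing a different route.

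There is, however, one genuine inconsistency to repair: you cannot both normalize $g$ at the basepoint and contract with the generating vector fields of the $G$-action in (2). That action is left translation, so it moves $g$ at the basepoint; the normalization collapses exactly the directions you need later, and indeed it would identify $\mathcal{P}(Q)$ with $\mathcal{A}(Q)$ and destroy the $G$-torsor structure of $\mathcal{P}_p$. Without the normalization, Stokes gives $d\int_{[0,1]}\mathrm{ev}^*\eta=\mathrm{ev}_1^*\eta-\mathrm{ev}_0^*\eta$ with $\mathrm{ev}_1=\mathrm{Mult}\circ(\widetilde{\mathcal{P}_q},\mathrm{ev}_0)$, and by the standard identity $\mathrm{Mult}^*\eta=\mathrm{pr}_1^*\eta+\mathrm{pr}_2^*\eta-\tfrac12\,d\langle \mathrm{pr}_1^*\theta^L,\mathrm{pr}_2^*\theta^R\rangle$ (up to sign conventions) this equals $\widetilde{\mathcal{P}_q}^*\eta$ only after adding the correction term $\tfrac12\langle\widetilde{\mathcal{P}_q}^*\theta^L,\mathrm{ev}_0^*\theta^R\rangle$ to the fiber integral. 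That correction is precisely the ``boundary term'' you leave unspecified, and it is also the term that supplies one of the two endpoint contributions assembling into $-\tfrac12\langle\theta^L+\theta^R,X\rangle$ in (2); with it included both (1) and (2) close up, without it neither does. The remainder --- the gauge contraction yielding $-\mathcal{P}_p^*\langle dA,\xi\rangle$ and the invariance statements from $\mathrm{Ad}$-invariance of the pairing and reparametrization invariance of the integral --- is sound modulo the sign and convention bookkeeping you already flag.
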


\begin{proof}
See \cite[Thm.~5.3]{alekseev2024coadjoint}. For $Q=S^1\times G$, see \cite[§8.4]{alekseev2009atiyah}.
\end{proof}

We have the following Morita equivalence of quasi-symplectic groupoids:
\begin{equation}\label{eq:morita-big}
\begin{tikzcd}
{(\mathrm{Gau}_0(Q)\ltimes \mathcal{A}(Q),\,\omega_1)} \arrow[d, shift right] \arrow[d, shift left] 
& {(\mathcal{P}(Q),\,\varpi_{\mathcal{P}})} \arrow[ld, "\mathcal{P}_p"'] \arrow[rd, "\widetilde{\mathcal{P}_q}"] 
& {(G\ltimes \widetilde{G},\,\omega_2)} \arrow[d, shift left] \arrow[d, shift right] \\
\mathcal{A}(Q) & & \widetilde{G}
\end{tikzcd}
\end{equation}
where $\mathrm{Gau}_0(Q)\ltimes \mathcal{A}(Q) = (\mathcal{P}(Q)\times_{\widetilde{G}}\mathcal{P}(Q))/G$, and $\omega_1$ is induced by $pr_1^*\varpi_{\mathcal{P}} - pr_2^*\varpi_{\mathcal{P}}$. The quasi-symplectic groupoid $(G\ltimes \widetilde{G},\omega_2)\rightrightarrows \widetilde{G}$ integrates the Cartan-Dirac structure on $\widetilde{G}$. If $\mathbf{C}=S^1$, then $Q\cong S^1\times G$ and
\[
\mathcal{P}(Q)=\{\beta\colon \mathbb{R}\to G \mid \beta(t+1)=h\,\beta(t)\ \text{for some }h\in G\}.
\]
In this case $\mathcal{A}(Q)=\Omega^1(S^1,\mathfrak{g})$ and $\mathrm{Gau}(Q)=C^\infty(S^1,G)$.

\subsection{Drinfeld-Sokolov embedding}
\begin{Notation}
Unless stated otherwise, assume $G=\mathrm{PSL}(n,\mathbb{R})$.
\end{Notation}
Consider the rank-$n$ bundle $E=J^{n-1}(|\Lambda|_{\mathbf{C}}^{-\frac{n-1}{2}})$. It defines the projective bundle $\mathbf{P}:=\mathbb{P}(E)\to \mathbf{C}$ and hence a principal $\mathrm{PSL}(n,\mathbb{R})$-bundle $Q\to \mathbf{C}$. A lift at any $x\in \widetilde{\mathbf{C}}$ defines an $\mathrm{SL}(n,\mathbb{R})$-frame of $E_x$, unique up to sign, hence an element of $\pi^*Q_x$. For a quasi-periodic curve $\gamma$ we obtain a quasi-periodic section of $\pi^*Q$, i.e.\ an element of $\mathcal{P}(Q)$. This defines
\begin{equation}\label{eq:iota-hat}
\hat{\iota}\colon \mathcal{D}_n(\mathbf{C})\longrightarrow \mathcal{P}(Q).
\end{equation}
If $\mathbf{C}=S^1$, we have the trivialization
\[
E_x=J_x^{n-1}\!\bigl(|\Lambda|_{\mathbf{C}}^{-\frac{n-1}{2}}\bigr) \longrightarrow \mathbb{R}^n,\qquad
j^{n-1}(u)\longmapsto \begin{pmatrix} u(x)\\ \vdots\\ u^{(n-1)}(x)\end{pmatrix},
\]
and
\[
\hat{\iota}(\gamma) \;=\; \tau \;=\;
\begin{pmatrix}
u_1 & \cdots & u_n\\
\vdots & & \vdots \\
u_1^{(n-1)} & \cdots & u_n^{(n-1)}
\end{pmatrix}.
\]
The action of $\mathrm{Diff}_+(\mathbf{C})$ on densities defines a bundle automorphism of $E$ and a principal bundle automorphism of $Q\to \mathbf{C}$, giving a group homomorphism
\[
\mathrm{Diff}_+(\mathbf{C}) \longrightarrow \mathrm{Aut}_+(Q),
\]
which splits the exact sequence
\[
1 \to \mathrm{Gau}(Q) \to \mathrm{Aut}_+(Q) \to \mathrm{Diff}_+(\mathbf{C}) \to 1.
\]
Thus, $\mathrm{Aut}_+(Q)=\mathrm{Diff}_+(\mathbf{C})\ltimes \mathrm{Gau}(Q)$. The map $\hat{\iota}$ is $G$-equivariant and descends to
\begin{equation}\label{eq:iota}
\iota\colon \mathcal{R}_n(\mathbf{C})\longrightarrow \mathcal{A}(Q).
\end{equation}

\begin{Remark}
Via ODE theory, $L$ defines a connection $\nabla$ on $E$ with horizontal sections $j^{n-1}(u_1),\dots,j^{n-1}(u_n)$, hence a connection on $Q$. This gives the same map as \eqref{eq:iota}.
\end{Remark}

If $\mathbf{C}=S^1$, every element of $\mathcal{R}_n(S^1)$ is an $n$-th order linear ODE with principal symbol \(1\). If
\[
L=\frac{d^n}{d\theta^n}+a_{n-2}(\theta)\frac{d^{n-2}}{d\theta^{n-2}}+\cdots + a_0(\theta),
\]
then under $\iota$ it corresponds to the companion matrix:

\[
\begin{pmatrix}
0   & 0   & 0   & 0   & \cdots & 0      & -a_0 \\
1   & 0   & 0   & 0   & \cdots & 0      & -a_1 \\
0   & 1   & 0   & 0   & \cdots & 0      & -a_2 \\
0   & 0   & 1   & 0   & \cdots & 0      & -a_3 \\
\vdots & \vdots & \vdots & \ddots & \ddots & \vdots & \vdots \\
0   & 0   & 0   & 0   & \cdots & 0      & -a_{n-2} \\
0   & 0   & 0   & 0   & \cdots & 1      & 0
\end{pmatrix}.
\]
\subsection{The 2-form \texorpdfstring{$\varpi_{\mathcal{D}}$}{varpiD}}
Define $\varpi_{\mathcal{D}}:=\hat{\iota}^{*}\varpi_{\mathcal{P}}$ on $\mathcal{D}_n(\mathbf{C})$. Then:
\begin{Theorem}\label{thm:varpiD}
\begin{enumerate}
\item $d\varpi_{\mathcal{D}} = \widetilde{\mathcal{D}_q^{'}}^{*}\eta$.
\item $\varpi_{\mathcal{D}}$ is $G$-invariant, and for $X\in\mathfrak{gl}(n,\mathbb{R})$,
\[
\iota_{X_{\mathcal{D}}}\varpi_{\mathcal{D}} \;=\; 
\tfrac{1}{2}\,\operatorname{tr}\,\Bigl(X\,\widetilde{\mathcal{D}_q^{'}}^{*}(\theta^L+\theta^R)\Bigr).
\]
\item $\varpi_{\mathcal{D}}$ is $\mathrm{Diff}_{\mathbb{Z}}(\widetilde{\mathbf{C}})$-invariant, and for $v\in \mathrm{Vect}(\mathbf{C})$,
\[
\iota_{v_{\mathcal{D}}}\varpi_{\mathcal{D}} \;=\; 
-\,\mathcal{D}_p^{*}\!\left(\int_{\mathbf{C}} (dL)\,v \right).
\]
\end{enumerate}
\end{Theorem}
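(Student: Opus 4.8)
The plan is to derive all three properties by pulling Theorem~\ref{thm:varpiP} back along the Drinfeld--Sokolov embedding $\hat\iota$, using three compatibilities: the monodromy identity $\widetilde{\mathcal{P}_q}\circ\hat\iota=\widetilde{\mathcal{D}_q'}$ (the $\mathrm{SL}$-frame of a lift of $\gamma$ carries the same lifted monodromy as $\gamma$), the factorization $\mathcal{P}_p\circ\hat\iota=\iota\circ\mathcal{D}_p$, and the equivariance of $\hat\iota$ under $G$ and under the splitting homomorphism $\mathrm{Diff}_+(\mathbf{C})\to\mathrm{Aut}_+(Q)$. Since $d$ commutes with pullback, part~(1) is then immediate:
\[
d\varpi_{\mathcal{D}}=\hat\iota^*\,d\varpi_{\mathcal{P}}=\hat\iota^*\widetilde{\mathcal{P}_q}^*\eta=(\widetilde{\mathcal{P}_q}\circ\hat\iota)^*\eta=\widetilde{\mathcal{D}_q'}^*\eta.
\]

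For part~(2), the $G$-invariance of $\varpi_{\mathcal{D}}=\hat\iota^*\varpi_{\mathcal{P}}$ follows from the $G$-equivariance of $\hat\iota$ and the $G$-invariance of $\varpi_{\mathcal{P}}$. As $X_{\mathcal{D}}$ is $\hat\iota$-related to $X_{\mathcal{P}}$, contracting and pulling back Theorem~\ref{thm:varpiP}(2) gives
\[
\iota_{X_{\mathcal{D}}}\varpi_{\mathcal{D}}=\hat\iota^*(\iota_{X_{\mathcal{P}}}\varpi_{\mathcal{P}})=-\tfrac12\,\widetilde{\mathcal{D}_q'}^*\langle\theta^L+\theta^R,X\rangle.
\]
It remains only to fix the invariant pairing: taking $\langle X,Y\rangle=-\operatorname{tr}(XY)$ on $\mathfrak{sl}(n,\mathbb{R})=\operatorname{Lie}(G)$ and using cyclicity of the trace turns the right-hand side into $\tfrac12\operatorname{tr}\bigl(X\,\widetilde{\mathcal{D}_q'}^*(\theta^L+\theta^R)\bigr)$, which also explains the sign change relative to Theorem~\ref{thm:varpiP}(2). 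Finally the formula extends from $\mathfrak{sl}(n,\mathbb{R})$ to $\mathfrak{gl}(n,\mathbb{R})$ because $\theta^L,\theta^R$ are $\mathfrak{sl}(n,\mathbb{R})$-valued, so $\operatorname{tr}\bigl(\widetilde{\mathcal{D}_q'}^*(\theta^L+\theta^R)\bigr)=0$ and the scalar part of $X$ drops out; this matches the fact that scalar matrices act trivially on $\mathbb{RP}^{\,n-1}$, so $X_{\mathcal{D}}$ depends only on the class of $X$ modulo $\mathbb{R}\,I$.

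For part~(3), the $\mathrm{Diff}_{\mathbb{Z}}(\widetilde{\mathbf{C}})$-invariance of $\varpi_{\mathcal{D}}$ follows from the $\mathrm{Diff}_+(\mathbf{C})\to\mathrm{Aut}_+(Q)$-equivariance of $\hat\iota$ and the $\mathrm{Aut}_+(Q)$-invariance of $\varpi_{\mathcal{P}}$ in Theorem~\ref{thm:varpiP}(3). For the contraction formula I would first show that $\iota_{v_{\mathcal{D}}}\varpi_{\mathcal{D}}$ is $\mathcal{D}_p$-basic. It is horizontal: since $F\cdot\gamma$ has the same monodromy as $\gamma$ whenever $F$ commutes with $\kappa$, the map $\widetilde{\mathcal{D}_q'}$ is constant along $\mathrm{Diff}_{\mathbb{Z}}(\widetilde{\mathbf{C}})$-orbits, so $v_{\mathcal{D}}$ is $\widetilde{\mathcal{D}_q'}$-vertical and $\widetilde{\mathcal{D}_q'}^*(\theta^L+\theta^R)$ annihilates it; by part~(2) this yields $\varpi_{\mathcal{D}}(X_{\mathcal{D}},v_{\mathcal{D}})=0$. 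It is $G$-invariant because the $G$- and $\mathrm{Diff}_{\mathbb{Z}}(\widetilde{\mathbf{C}})$-actions commute and $\varpi_{\mathcal{D}}$ is $G$-invariant, so $\mathcal{L}_{X_{\mathcal{D}}}(\iota_{v_{\mathcal{D}}}\varpi_{\mathcal{D}})=\iota_{[X_{\mathcal{D}},v_{\mathcal{D}}]}\varpi_{\mathcal{D}}+\iota_{v_{\mathcal{D}}}\mathcal{L}_{X_{\mathcal{D}}}\varpi_{\mathcal{D}}=0$. Hence $\iota_{v_{\mathcal{D}}}\varpi_{\mathcal{D}}=\mathcal{D}_p^*\beta_v$ for a unique $1$-form $\beta_v$ on $\mathcal{R}_n(\mathbf{C})$.

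To identify $\beta_v=-\int_{\mathbf{C}}(dL)\,v$, I would write $\iota_{v_{\mathcal{D}}}\varpi_{\mathcal{D}}=\hat\iota^*(\iota_{\hat v_{\mathcal{P}}}\varpi_{\mathcal{P}})$, where $\hat v$ is the infinitesimal automorphism of $Q$ induced by $v$, and decompose $\hat v=\mathrm{hor}_A(v)+\zeta(A,v)$ into its $A$-horizontal lift and a field-dependent gauge component $\zeta(A,v)\in\mathfrak{gau}(Q)$, with $A=\iota(L)$. Theorem~\ref{thm:varpiP}(3) evaluates the gauge part as $-\mathcal{P}_p^*\langle dA,\zeta(A,v)\rangle$, which by $\mathcal{P}_p\circ\hat\iota=\iota\circ\mathcal{D}_p$ contributes $-\mathcal{D}_p^*\iota^*\langle dA,\zeta(A,v)\rangle$; it then remains to check in the companion-matrix trivialization attached to $E=J^{n-1}(|\Lambda|_{\mathbf{C}}^{-\frac{n-1}{2}})$ that $\iota^*\langle dA,\zeta(A,v)\rangle=\int_{\mathbf{C}}(dL)\,v$, a direct computation from the density law $\mathcal{L}_v(\phi\,|\partial\theta|^{r})=(h\phi'+r\,h'\phi)\,|\partial\theta|^{r}$ and the infinitesimal action $[\mathcal{L}_v,L]$ on operators. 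The hard part, which is the crux of the whole statement, is to show that the $A$-horizontal lift $\mathrm{hor}_A(v)$ contributes nothing to the $\mathcal{D}_p$-basic form $\iota_{v_{\mathcal{D}}}\varpi_{\mathcal{D}}$ --- equivalently, that along the Drinfeld--Sokolov slice the geometric $\mathrm{Diff}_+(\mathbf{C})$-action reduces, modulo the fibre directions of $\mathcal{P}_p$, to the residual gauge transformation $\zeta(A,v)$ --- which is precisely the mechanism identifying the diffeomorphism action with the Virasoro action underlying the Adler--Gelfand--Dikii structure.
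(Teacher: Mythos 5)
Your strategy---pulling Theorem~\ref{thm:varpiP} back along $\hat\iota$ via the identities $\widetilde{\mathcal{P}_q}\circ\hat\iota=\widetilde{\mathcal{D}_q^{'}}$ and $\mathcal{P}_p\circ\hat\iota=\iota\circ\mathcal{D}_p$ together with the equivariance of $\hat\iota$---is exactly the paper's proof, which consists of precisely these two commutative diagrams and nothing more. Your parts (1) and (2) are complete; the sign and the extension from $\mathfrak{sl}(n,\mathbb{R})$ to $\mathfrak{gl}(n,\mathbb{R})$ are handled correctly once the invariant pairing is normalized as $\langle X,Y\rangle=-\operatorname{tr}(XY)$, a convention the paper leaves implicit.

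In part (3) you explicitly leave open the step you call the crux---that the $A$-horizontal component of the infinitesimal automorphism $\hat v$ contributes nothing---so as written there is a gap; but it closes in two lines and you should include the argument. Since $\dim\mathbf{C}=1$, every connection is flat, so $\mathrm{hor}_A(v)$ preserves $A$; more to the point, the generating vector field of $\mathrm{hor}_A(v)$ on $\mathcal{P}(Q)$ at a point $\tau$ with $\mathcal{P}_p(\tau)=A$ is $\mathrm{hor}_A(v)\circ\tau-d\tau(v)$, which vanishes because $\tau$ is $A$-horizontal, i.e.\ $d\tau(v)$ \emph{is} the horizontal lift of $v$ along $\tau$. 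Hence $\hat v_{\mathcal{P}}(\tau)=\zeta(A,v)_{\mathcal{P}}(\tau)$ pointwise, with $\zeta(A,v)=\hat v-\mathrm{hor}_A(v)\in\mathfrak{gau}(Q)$ (both fields cover $v$, so the difference is vertical), and since contraction is a pointwise operation, Theorem~\ref{thm:varpiP}~(3) immediately yields $\iota_{\hat v_{\mathcal{P}}}\varpi_{\mathcal{P}}=-\,\mathcal{P}_p^{*}\langle dA,\zeta(A,v)\rangle$ on all of $\mathcal{P}(Q)$; your separate horizontality/$G$-invariance argument for basicness, while correct, then becomes a sanity check rather than a needed step. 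What remains is the finite computation you describe, identifying $\iota^{*}\langle dA,\zeta(A,v)\rangle$ with $\int_{\mathbf{C}}(dL)\,v$ in the companion-matrix trivialization; this should be carried out or cited rather than asserted, though it is the standard Drinfeld--Sokolov identification of the reduced $\mathrm{Diff}_+(\mathbf{C})$-action. The paper's own proof does not address any of this, so your version, with the above inserted, is the more complete one.
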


\begin{proof}
(a) Consider
\[
\begin{tikzcd}
\mathcal{D}_n(\mathbf{C}) \arrow[r, "\hat{\iota}"] \arrow[rd, "\widetilde{\mathcal{D}_q^{'}}"'] & \mathcal{P}(Q) \arrow[d, "\widetilde{\mathcal{P}_q}"] \\
& \widetilde{G}
\end{tikzcd}
\]
and use $d\varpi_{\mathcal{D}}=\hat{\iota}^{*}(d\varpi_{\mathcal{P}})=\hat{\iota}^{*}\widetilde{\mathcal{P}_q}^{*}\eta=\widetilde{\mathcal{D}_q^{'}}^{*}\eta$ by Theorem~\ref{thm:varpiP}. The other parts follow from $G$-equivariance and the commutative diagram
\[
\begin{tikzcd}
\mathcal{D}_n(\mathbf{C}) \arrow[r, "\hat{\iota}"] \arrow[d, "\mathcal{D}_p"'] & \mathcal{P}(Q) \arrow[d, "\mathcal{P}_p"] \\
\mathcal{R}_n(\mathbf{C}) \arrow[r, "\iota"] & \mathcal{A}(Q)
\end{tikzcd}
\]
together with Theorem~\ref{thm:varpiP}\ref{it:varpiP-c}.
\end{proof}

\subsection{Drinfeld-Sokolov reduction}
\begin{Notation}
We denote the vertical bundle by $V\mathbf{P}\subseteq T\mathbf{P}$.
\end{Notation}
Fix $\sigma\in \Gamma(\mathbf{P})$. Let $\mathcal{B}\subset \mathrm{Gau}(Q)$ preserve $\sigma$, and let $\mathcal{N}\subset \mathcal{B}$ act trivially on $\sigma^*V\mathbf{P}$. Then
\[
\mathcal{N}\subset \mathcal{B}\subset \mathcal{G}=\mathrm{Gau}(Q).
\]
Choose a metric on $\mathfrak{g}=\mathfrak{gl}(n,\mathbb{R})$. It induces a bundle metric on $Q\times_G\mathfrak{g}$ and a non-degenerate $C^\infty(\mathbf{C})$-valued bilinear form on $\mathfrak{gau}(Q)=\Omega^0(\mathbf{C},Q\times_G\mathfrak{g})$, giving a non-degenerate pairing between $\mathrm{Lie}(\mathcal{G})/\mathrm{Lie}(\mathcal{B})$ and $\mathrm{Lie}(\mathcal{N})$. Using local trivializations, $\mathrm{Lie}(\mathcal{G})/\mathrm{Lie}(\mathcal{B})\cong \Gamma(\sigma^*V\mathbf{P})$. Integrating over $\mathbf{C}$,
\begin{equation}
\Omega^1(\mathbf{C},\sigma^*V\mathbf{P}) \;\cong\; \mathrm{Lie}(\mathcal{N})^*,
\end{equation}
where ${}^*$ denotes the smooth dual.

With $\mathbf{P}=\mathbb{P}(E)$ and $E=J^{n-1}(|\Lambda|_{\mathbf{C}}^{-\frac{n-1}{2}})$, sections of $\mathbf{P}$ correspond to line subbundles of $E$. From the exact sequence
\[
0 \to \operatorname{Sym}^{n-1}(T^*\mathbf{C})\otimes |\Lambda|_{\mathbf{C}}^{-\frac{n-1}{2}} \to J^{n-1}\!\bigl(|\Lambda|_{\mathbf{C}}^{-\frac{n-1}{2}}\bigr) \to J^{n-2}\!\bigl(|\Lambda|_{\mathbf{C}}^{-\frac{n-1}{2}}\bigr) \to 0,
\]
we get the line bundle
\[
\ell \;=\; \operatorname{Sym}^{n-1}(T^*\mathbf{C})\otimes |\Lambda|_{\mathbf{C}}^{-\frac{n-1}{2}} \;=\; |\Lambda|_{\mathbf{C}}^{\frac{n-1}{2}}.
\]
Therefore
\[
\sigma^*V\mathbf{P} \;=\; \ell^*\otimes (E/\ell) \;=\; |\Lambda|_{\mathbf{C}}^{-\frac{n-1}{2}} \otimes J^{n-2}\!\bigl(|\Lambda|_{\mathbf{C}}^{-\frac{n-1}{2}}\bigr).
\]
Identify $\mathcal{A}(Q)$ with projective connections on $\mathbf{P}$. There is an $\mathcal{N}$-equivariant map
\[
\Psi\colon \mathcal{A}(Q)\to \mathrm{Lie}(\mathcal{N})^*,
\]
which sends $A\in \mathcal{A}(Q)$ to the composition $T\sigma\colon T\mathbf{C}\to T\mathbf{P}$ followed by the vertical projection defined by $A$. If $\mathbf{C}=S^1$, using the Iwasawa decomposition $G=KAN$ with $B=AN$, one has $\mathcal{B}=C^\infty(S^1,B)$, $\mathcal{N}=C^\infty(S^1,N)$, $\mathcal{A}(Q)=\Omega^1(S^1,\mathfrak{g})\cong C^\infty(S^1,\mathfrak{g})$, and $\Psi$ projects to the strictly lower triangular part.

\begin{Theorem}[Drinfeld-Sokolov]\label{thm:DS}
There exists $\Lambda\in \mathrm{Lie}(\mathcal{N})^{*}$, invariant for the coadjoint $\mathcal{N}$-action, such that the inclusion
\[
\mathcal{R}_n(\mathbf{C}) \longrightarrow \mathcal{A}(Q)
\]
takes values in $\Psi^{-1}(\Lambda)$ and is a global slice for the $\mathcal{N}$-action on this level set. Consequently,
\[
\mathcal{R}_n(\mathbf{C}) \;\cong\; \Psi^{-1}(\Lambda)/\mathcal{N}
\]
as Poisson manifolds.
\end{Theorem}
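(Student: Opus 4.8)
The plan is to reduce to $\mathbf{C}=S^1$ with the standard trivialization, so that a connection reads $A=\frac{d}{d\theta}+q$ with $q\in C^\infty(S^1,\mathfrak{g})$, $\mathfrak{g}=\mathfrak{sl}(n,\mathbb{R})$, the gauge group is $\mathcal{N}=C^\infty(S^1,N)$ for $N$ the upper-triangular unipotent subgroup of the Iwasawa decomposition, and $\Psi(A)=\mathrm{pr}_{\mathfrak{n}_-}(q)$ is the pointwise strictly-lower-triangular part. Throughout I use the principal $\mathbb{Z}$-grading $\mathfrak{g}=\bigoplus_{k}\mathfrak{g}_k$ with $\deg E_{ij}=j-i$, so that $\mathfrak{n}_+=\bigoplus_{k>0}\mathfrak{g}_k$, $\mathfrak{h}=\mathfrak{g}_0$, $\mathfrak{n}_-=\bigoplus_{k<0}\mathfrak{g}_k$, and I take $\Lambda=f:=\sum_{i=1}^{n-1}E_{i+1,i}\in\mathfrak{g}_{-1}$, regarded as an element of $\mathrm{Lie}(\mathcal{N})^{*}\cong C^\infty(S^1,\mathfrak{n}_-)$ via the trace form. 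Two points are then immediate. First, $\iota(\mathcal{R}_n(\mathbf{C}))\subseteq\Psi^{-1}(\Lambda)$, since the strictly-lower part of the companion matrix is exactly the subdiagonal $f$. Second, $\Lambda$ is invariant: for $n\in N$ the action sends $q\mapsto \mathrm{Ad}_n q-(\frac{d}{d\theta}n)n^{-1}$, and because $\mathrm{ad}_{\mathfrak{n}_+}$ strictly raises degree we have $\mathrm{Ad}_n f\in f+\mathfrak{g}_{\geq 0}$, $\mathrm{Ad}_n(\mathfrak{b})\subseteq\mathfrak{b}$, and $(\frac{d}{d\theta}n)n^{-1}\in\mathfrak{n}_+\subseteq\mathfrak{b}$; hence $\mathrm{pr}_{\mathfrak{n}_-}(q)$ is unchanged. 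This simultaneously shows that $\mathcal{N}$ preserves $\Psi^{-1}(\Lambda)=\{\frac{d}{d\theta}+f+b:\ b\in C^\infty(S^1,\mathfrak{b})\}$ and that $\Lambda$ is a fixed point of the (affine) coadjoint action.

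The core is the slice property, for which the key algebraic input is the Kostant-type transversality $\mathfrak{b}=\mathrm{ad}_f(\mathfrak{n}_+)\oplus V$, where $V=\mathrm{span}\{E_{1,n},\dots,E_{n-1,n}\}$ is the last-column complement spanning the companion slice. Here $\mathrm{ad}_f\colon\mathfrak{n}_+\to\mathfrak{b}$ is injective because the centralizer of the regular nilpotent $f$ is $\mathrm{span}\{f,\dots,f^{\,n-1}\}\subseteq\mathfrak{n}_-$; a dimension count ($\dim\mathfrak{b}=\binom{n}{2}+(n-1)$, $\dim\mathrm{ad}_f(\mathfrak{n}_+)=\binom{n}{2}$, $\dim V=n-1$) together with $\mathrm{ad}_f(\mathfrak{n}_+)\cap V=0$ gives the decomposition. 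Since everything is graded, this refines degreewise to $\mathfrak{g}_d=\mathrm{ad}_f(\mathfrak{g}_{d+1})\oplus(V\cap\mathfrak{g}_d)$ for each $d\geq 0$, with $\mathrm{ad}_f\colon\mathfrak{g}_{d+1}\to\mathfrak{g}_d$ injective.

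Given this, I solve the gauge-fixing equation recursively. For $A=\frac{d}{d\theta}+f+b\in\Psi^{-1}(\Lambda)$ I seek $X=\sum_{k\geq 1}X_k\in C^\infty(S^1,\mathfrak{n}_+)$ with $e^{X}\cdot A=\frac{d}{d\theta}+f+v$, $v\in C^\infty(S^1,V)$. Expanding, the degree-$d$ component reads $[X_{d+1},f]\equiv(\text{a polynomial expression in }X_1,\dots,X_d,\ \frac{d}{d\theta}X_{\leq d},\text{ and }b)\ \ \mathrm{mod}\ (V\cap\mathfrak{g}_d)$, and the degreewise invertibility of $\mathrm{ad}_f$ onto $\mathfrak{g}_d/(V\cap\mathfrak{g}_d)$ determines $X_{d+1}$ uniquely from strictly lower-degree data; the recursion terminates after finitely many steps and produces a smooth $X$, hence $e^{X}\in\mathcal{N}$, and reads off $v$ (whose top component $E_{1,n}$ is already gauge-invariant). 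The same recursion applied to $b$ already in companion form shows the $\mathcal{N}$-stabilizer of a companion connection is trivial, so $\mathcal{N}$ acts freely and $\iota(\mathcal{R}_n(\mathbf{C}))$ meets each $\mathcal{N}$-orbit in $\Psi^{-1}(\Lambda)$ exactly once; thus it is a global slice and $\mathcal{R}_n(\mathbf{C})\cong\Psi^{-1}(\Lambda)/\mathcal{N}$ as manifolds.

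For the Poisson statement I invoke Poisson reduction at an invariant value: $\Psi$ is the moment map for the Hamiltonian $\mathcal{N}$-action coming from the Poisson structure on $\mathcal{A}(Q)$, $\Lambda$ is coadjoint-invariant by the first step, and the action is free and proper on $\Psi^{-1}(\Lambda)$, so the quotient carries a canonical reduced Poisson structure. Transporting it along the slice identification endows $\mathcal{R}_n(\mathbf{C})$ with a Poisson structure, which I identify with the Adler-Gelfand-Dikii structure by matching the reduced Hamiltonian vector fields against the explicit operator $V_X(L)=L(XL)_+-(LX)_+L$ recorded above. The main obstacle is precisely the global-slice step: ensuring that the formal degreewise recursion assembles into an honest smooth element of $\mathcal{N}=C^\infty(S^1,N)$ and that each orbit is met exactly once globally, rather than only formally or locally. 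Everything there hinges on the injectivity of $\mathrm{ad}_f$ on $\mathfrak{n}_+$ and the transversality $\mathfrak{b}=\mathrm{ad}_f(\mathfrak{n}_+)\oplus V$, and on the fact that each degree of the recursion feeds only on strictly lower degrees, so that it closes after finitely many steps.
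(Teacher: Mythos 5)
Your proposal follows essentially the same route as the paper: the same choice of $\Lambda$ (the subdiagonal matrix $f$), the same level set $\Psi^{-1}(\Lambda)$, and the same companion slice met exactly once by each $\mathcal{N}$-orbit; you simply spell out the Kostant-type transversality $\mathfrak{b}=\mathrm{ad}_f(\mathfrak{n}_+)\oplus V$ and the degreewise gauge-fixing recursion that the paper delegates to Drinfeld--Sokolov, Khesin--Wendt, and Dickey. The only step asserted rather than carried out in both versions is the identification of the reduced Poisson bracket with the Adler--Gelfand--Dikii structure, which the paper likewise outsources to the cited references.
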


\begin{proof}
For $\mathbf{C}=S^1$ take
\[
\Lambda =
\begin{pmatrix}
0 & 0 & 0 & \cdots & 0 & 0 \\
1 & 0 & 0 & \cdots & 0 & 0 \\
0 & 1 & 0 & \cdots & 0 & 0 \\
\vdots & \vdots & \ddots & \ddots & \vdots & \vdots \\
0 & 0 & \cdots & 1 & 0 & 0 \\
0 & 0 & \cdots & 0 & 1 & 0
\end{pmatrix}.
\]
Then
\[
\Psi^{-1}(\Lambda) =
\begin{pmatrix}
* & * & * & \cdots & * & * \\
1 & * & * & \cdots & * & * \\
0 & 1 & * & \cdots & * & * \\
0 & 0 & 1 & \ddots & \vdots & \vdots \\
\vdots & \vdots & \ddots & \ddots & * & * \\
0 & 0 & \cdots & 0 & 1 & *
\end{pmatrix}.
\]
In every $\mathcal{N}$-orbit there is a unique element of the form
\[
\begin{pmatrix}
0   & 0   & 0   & 0   & \cdots & 0      & -a_0 \\
1   & 0   & 0   & 0   & \cdots & 0      & -a_1 \\
0   & 1   & 0   & 0   & \cdots & 0      & -a_2 \\
0   & 0   & 1   & 0   & \cdots & 0      & -a_3 \\
\vdots & \vdots & \vdots & \ddots & \ddots & \vdots & \vdots \\
0   & 0   & 0   & 0   & \cdots & 0      & -a_{n-2} \\
0   & 0   & 0   & 0   & \cdots & 1      & 0
\end{pmatrix}.
\]
See \cite[§6.5]{drinfeld1985lie}, \cite[App.~A.8]{khesin2009geometry}, and \cite[§9.4]{dickey2003soliton}.
\end{proof}

Consider $\hat{\Psi}=\Psi\circ \mathcal{P}_p\colon \mathcal{P}(Q)\to \mathrm{Lie}(\mathcal{N})^*$. By Theorem~\ref{thm:DS}, $\hat{\iota}\colon \mathcal{D}_n(\mathbf{C})\to \mathcal{P}(Q)$ is a global slice for the $\mathcal{N}$-action on $\hat{\Psi}^{-1}(\Lambda)$. Part~\ref{it:varpiP-c} of Theorem~\ref{thm:varpiP} implies
\begin{equation}\label{eq:contraction-Psi}
\iota_{\xi_{\mathcal{P}(Q)}}\varpi_{\mathcal{P}} \;=\; -\langle d\hat{\Psi}, \xi\rangle.
\end{equation}
Hence $\varpi_{\mathcal{P}}|_{\hat{\Psi}^{-1}(\Lambda)}$ is $\mathcal{N}$-basic and descends to a $2$-form on $\mathcal{D}_n(\mathbf{C})$, which equals $\varpi_{\mathcal{D}}$ because $\hat{\iota}^{*}\varpi_{\mathcal{P}}=\varpi_{\mathcal{D}}$.

\subsection{A Morita equivalence for Adler-Gelfand-Dikii Poisson structures}
By definition, there exists a $G$-equivariant map 
\[ f:Y_n(\mathbf{C}) \to \widetilde{G} \] 
that pulls back the Cartan-Dirac structure to a Dirac structure on $Y_n(\mathbf{C})$. 
Moreover, the following diagram commutes:
\[
\begin{tikzcd}
\mathcal{D}^G_n(\mathbf{C}) \arrow[d, "\widetilde{\mathcal{D}}_q"'] \arrow[rd, "\widetilde{\mathcal{D}}_q^{\prime}"] & \\
Y_n(\mathbf{C}) \arrow[r, "f"'] & \widetilde{G}
\end{tikzcd}
\]
We have a Lie groupoid homomorphism given by
\begin{equation}\label{eq:diagram-46}
\begin{tikzcd}
\mathcal{D}_n(\mathbf{C}) \times_{Y_n(\mathbf{C})} \mathcal{D}_n(\mathbf{C}) \arrow[r, "{(\hat{\iota},\hat{\iota})}"] \arrow[d, shift right] \arrow[d, shift left] & \mathcal{P}(Q)\times_{\widetilde{G}} \mathcal{P}(Q) \arrow[d, shift left] \arrow[d, shift right] \\
\mathcal{D}_n(\mathbf{C}) \arrow[r, "\hat{\iota}"] & \mathcal{P}(Q)
\end{tikzcd}.
\end{equation}
The diagram \eqref{eq:diagram-46} is $G$-equivariant (diagonal actions), hence descends to
\[
\begin{tikzcd}
\mathcal{D}_n(\mathbf{C})\times_{Y_n(\mathbf{C})}\mathcal{D}_n(\mathbf{C})/G \arrow[r, "\tilde{\iota}"] \arrow[d, shift right] \arrow[d, shift left] & \mathcal{P}(Q)\times_{\widetilde{G}}\mathcal{P}(Q)/G \arrow[d, shift left] \arrow[d, shift right] \\
\mathcal{R}_n(\mathbf{C}) \arrow[r, "\iota"] & \mathcal{A}(Q)
\end{tikzcd}.
\]

The $2$-action of the Lie $2$-group $\mathcal{N}\times \mathcal{N} \rightrightarrows \mathcal{N}$ on $(\mathcal{P}(Q)\times_{\widetilde{G}}\mathcal{P}(Q)/G,\omega_1)\rightrightarrows \mathcal{A}(Q)$ has moment map
\[
\Phi \colon \mathcal{P}(Q)\times_{\widetilde{G}}\mathcal{P}(Q)/G \longrightarrow \mathrm{Lie}(\mathcal{N})^*\times \mathrm{Lie}(\mathcal{N})^*, \qquad \Phi=(\Psi,\Psi)\circ (s,t),
\]
and base moment map $\Psi\colon \mathcal{A}(Q)\to \mathrm{Lie}(\mathcal{N})^*$. The slice $\tilde{\iota}$ meets every $\mathcal{N}\times \mathcal{N}$-orbit in $\Phi^{-1}(\Lambda,\Lambda)$ once. Applying reduction (e.g.\ Proposition~\ref{Proposition 3.2}) yields a symplectic groupoid
\[
\begin{tikzcd}
(\mathcal{D}_n(\mathbf{C}) \times_{Y_n(\mathbf{C})} \mathcal{D}_n(\mathbf{C})/G, \,\omega_{\mathrm{red}}) \arrow[d, "s"', shift right] \arrow[d, "t", shift left] \\
\mathcal{R}_n(\mathbf{C})
\end{tikzcd}
\]
integrating the Adler-Gelfand-Dikii Poisson structure on $\mathcal{R}_n(\mathbf{C})$.

\begin{Lemma}
The $2$-form $pr_1^*\varpi_{\mathcal{D}} - pr_2^*\varpi_{\mathcal{D}}$ descends to a closed multiplicative $2$-form $\tilde{\omega}_1$ on $\bigl(\mathcal{D}_n(\mathbf{C})\times_{Y_n(\mathbf{C})}\mathcal{D}_n(\mathbf{C})\bigr)/G \rightrightarrows \mathcal{R}_n(\mathbf{C})$.
\end{Lemma}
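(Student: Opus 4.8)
The plan is to establish four properties of the $2$-form $pr_1^*\varpi_{\mathcal{D}}-pr_2^*\varpi_{\mathcal{D}}$ on the fiber product $\mathcal{D}_n(\mathbf{C})\times_{Y_n(\mathbf{C})}\mathcal{D}_n(\mathbf{C})$: invariance and horizontality for the diagonal $G$-action, which together yield descent, followed by closedness and multiplicativity of the descended form $\tilde\omega_1$. Every step rests on a single observation. Since the triangle relating $\widetilde{\mathcal{D}_q}$, $f$ and $\widetilde{\mathcal{D}_q^{'}}$ commutes, we have $\widetilde{\mathcal{D}_q^{'}}=f\circ\widetilde{\mathcal{D}_q}$; and because the fiber product is formed over $Y_n(\mathbf{C})$ along $\widetilde{\mathcal{D}_q}$, the two projections satisfy $\widetilde{\mathcal{D}_q}\circ pr_1=\widetilde{\mathcal{D}_q}\circ pr_2$, whence
\[
\widetilde{\mathcal{D}_q^{'}}\circ pr_1 \;=\; \widetilde{\mathcal{D}_q^{'}}\circ pr_2
\]
as maps into $\widetilde{G}$. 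I will use this identity repeatedly.

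First I would check descent. By Theorem~\ref{thm:varpiD}(2) the form $\varpi_{\mathcal{D}}$ is $G$-invariant, and since each projection $pr_i$ intertwines the diagonal action with the $G$-action on the $i$-th factor, both $pr_i^*\varpi_{\mathcal{D}}$, and hence their difference, are invariant. For horizontality, write $v_X=(X_{\mathcal{D}},X_{\mathcal{D}})$ for the generating vector field of the diagonal action attached to $X\in\mathfrak{g}\subset\mathfrak{gl}(n,\R)$; it is tangent to the fiber product. Then
\[
\iota_{v_X}\bigl(pr_1^*\varpi_{\mathcal{D}}-pr_2^*\varpi_{\mathcal{D}}\bigr)=pr_1^*\bigl(\iota_{X_{\mathcal{D}}}\varpi_{\mathcal{D}}\bigr)-pr_2^*\bigl(\iota_{X_{\mathcal{D}}}\varpi_{\mathcal{D}}\bigr),
\]
and the contraction formula in Theorem~\ref{thm:varpiD}(2) identifies each summand with $\tfrac12\operatorname{tr}\bigl(X\,(\widetilde{\mathcal{D}_q^{'}}\circ pr_i)^*(\theta^L+\theta^R)\bigr)$. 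The displayed identity makes the two summands equal, so the contraction vanishes. Invariance together with horizontality for the free, proper diagonal action then give a well-defined smooth $2$-form $\tilde\omega_1$ on the quotient.

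Closedness is equally direct: using Theorem~\ref{thm:varpiD}(1),
\[
d\bigl(pr_1^*\varpi_{\mathcal{D}}-pr_2^*\varpi_{\mathcal{D}}\bigr)=(\widetilde{\mathcal{D}_q^{'}}\circ pr_1)^*\eta-(\widetilde{\mathcal{D}_q^{'}}\circ pr_2)^*\eta=0
\]
by the same identity, and closedness passes to $\tilde\omega_1$ because the quotient map is a surjective submersion. For multiplicativity I would first record the purely formal fact that on any submersion groupoid $N\times_B N\rightrightarrows N$ the difference $pr_1^*\alpha-pr_2^*\alpha$ of the two pullbacks of a $2$-form $\alpha$ on $N$ is multiplicative: over a composable triple $(n_1,n_2,n_3)$ the telescoping $(q_1^*-q_2^*)\alpha+(q_2^*-q_3^*)\alpha=(q_1^*-q_3^*)\alpha$ is exactly the condition $pr_1^*(\cdot)+pr_2^*(\cdot)=m^*(\cdot)$, where $q_1,q_2,q_3$ are the three component projections and $m$ is the multiplication. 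Applying this with $N=\mathcal{D}_n(\mathbf{C})$, $B=Y_n(\mathbf{C})$ and $\alpha=\varpi_{\mathcal{D}}$ shows the upstairs form is multiplicative; since the quotient by the diagonal $G$-action is a groupoid homomorphism and a surjective submersion pulling $\tilde\omega_1$ back to this form, multiplicativity descends to $\tilde\omega_1$.

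The computations are routine once the factorization $\widetilde{\mathcal{D}_q^{'}}=f\circ\widetilde{\mathcal{D}_q}$ is available, so the main obstacle is conceptual rather than computational: both horizontality and closedness would fail if the fiber product were taken over the wrong base, and it is precisely the choice to fiber over $Y_n(\mathbf{C})$, together with the commuting triangle, that forces $\widetilde{\mathcal{D}_q^{'}}\circ pr_1=\widetilde{\mathcal{D}_q^{'}}\circ pr_2$. The remaining point requiring care is the regularity of the diagonal $G$-action needed for a smooth descent; this follows from $\mathcal{D}_p\colon\mathcal{D}_n(\mathbf{C})\to\mathcal{R}_n(\mathbf{C})$ being a principal $G$-bundle, which makes the diagonal action free and proper, and is the same hypothesis invoked in the reduction of Proposition~\ref{Proposition 3.2}.
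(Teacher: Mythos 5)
Your proposal is correct and follows essentially the same route as the paper's own (much terser) proof: the identity $\widetilde{\mathcal{D}_q^{'}}\circ pr_1=\widetilde{\mathcal{D}_q^{'}}\circ pr_2$ forced by fibering over $Y_n(\mathbf{C})$ is exactly what the paper uses implicitly when it writes $pr_1^*\widetilde{\mathcal{D}_q}^{*}\eta - pr_2^*\widetilde{\mathcal{D}_q}^{*}\eta=0$, and multiplicativity upstairs plus descent is the paper's closing remark. The only difference is that you explicitly verify horizontality via the contraction formula of Theorem~\ref{thm:varpiD}(2), a step the paper compresses into the assertion that $G$-invariance makes the form basic; your version is the complete argument.
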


\begin{proof}
By $G$-invariance of $\varpi_{\mathcal{D}}$, the form $pr_1^*\varpi_{\mathcal{D}} - pr_2^*\varpi_{\mathcal{D}}$ is $G$-basic. Moreover,
\[
d\!\left(pr_1^*\varpi_{\mathcal{D}} - pr_2^*\varpi_{\mathcal{D}}\right)
= pr_1^*\widetilde{\mathcal{D}_q}^{*}\eta - pr_2^*\widetilde{\mathcal{D}_q}^{*}\eta = 0,
\]
so it is closed. Multiplicativity holds upstairs and descends.
\end{proof}

Since $\tilde{\iota}^{*}\omega_1 = \tilde{\omega}_1$, we have $\omega_{\mathrm{red}}=\tilde{\omega}_1$. Reducing the Morita equivalence \eqref{eq:morita-big} gives
\begin{equation}\label{eq:morita-reduced}
\begin{tikzcd}
{(\mathcal{D}_n(\mathbf{C}) \times_{Y_n(\mathbf{C})} \mathcal{D}_n(\mathbf{C})/G,\, \tilde{\omega}_1)} \arrow[d, shift right] \arrow[d, shift left] 
& {(\mathcal{D}_n(\mathbf{C}),\, \varpi_{\mathcal{D}})} \arrow[ld, "\mathcal{D}_p"'] \arrow[rd, "\widetilde{\mathcal{D}_q}"] 
& {(G \ltimes Y_n(\mathbf{C}),\, \tilde{\omega}_2)} \arrow[d, shift right] \arrow[d, shift left] \\
\mathcal{R}_n(\mathbf{C}) & & Y_n(\mathbf{C})
\end{tikzcd}
\end{equation}

\begin{Theorem}\label{thm:morita-G}
Let $G$ be one of $\mathrm{PSL}(n)$, $\mathrm{PSp}(n)$ (even $n$), or $\mathrm{PSO}(n)$ (odd $n$). Then we have a Morita equivalence of quasi-symplectic groupoids:
\begin{equation}\label{eq:morita-G}
\begin{tikzcd}
{(\mathcal{D}^G_n(\mathbf{C}) \times_{Y_n(\mathbf{C})}\mathcal{D}^G_n(\mathbf{C})/G,\, \tilde{\omega}_1)} \arrow[d, shift right] \arrow[d, shift left] 
& {(\mathcal{D}^G_n(\mathbf{C}),\, \varpi_{\mathcal{D}})} \arrow[ld, "\mathcal{D}_p"'] \arrow[rd, "\widetilde{\mathcal{D}_q}"] 
& {(G \ltimes Y_n(\mathbf{C}),\, \tilde{\omega}_2)} \arrow[d, shift right] \arrow[d, shift left] \\
\mathcal{R}^G_n(\mathbf{C}) & & Y_n(\mathbf{C})
\end{tikzcd}
\end{equation}
\end{Theorem}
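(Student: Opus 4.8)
The case $G=\mathrm{PSL}(n)$ is exactly the Morita equivalence \eqref{eq:morita-reduced} established above, so it remains to treat $G=\mathrm{PSp}(n)$ (even $n$) and $G=\mathrm{PSO}(n)$ (odd $n$). The plan is to obtain these two cases from the $\mathrm{PSL}(n)$ case by restricting \eqref{eq:morita-reduced} to the fixed-point locus of the \emph{duality involution}, rather than re-running the Drinfeld-Sokolov reduction from scratch. Projective duality $\gamma\mapsto\gamma^{*}$ is an involution $\vartheta$ on $\mathcal{D}_n(\mathbf{C})$ (well defined since $(\gamma^{*})^{*}=\gamma$) whose fixed-point set is, by definition, the self-dual locus $\mathcal{D}_n^{G}(\mathbf{C})$. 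By the operator duality $L\mapsto(-1)^nL^{*}$, the map $\vartheta$ covers the involution $L\mapsto(-1)^nL^{*}$ on $\mathcal{R}_n(\mathbf{C})$, whose fixed points are the self-adjoint (even $n$) resp.\ skew-adjoint (odd $n$) operators, that is $\mathcal{R}_n^{G}(\mathbf{C})$. On the monodromy side $\vartheta$ covers the Lie-group automorphism $\theta$ of $\mathrm{PSL}(n)$ given by inverse-transpose with respect to the relevant form; by Lemmas~\ref{Lemma 2.2}--\ref{Lemma 2.4} the identity component of its fixed points is precisely $G\in\{\mathrm{PSp}(n),\mathrm{PSO}(n)\}$, and $\vartheta$ satisfies the $\theta$-twisted equivariance $\vartheta(g\cdot\gamma)=\theta(g)\cdot\vartheta(\gamma)$, so the residual symmetry of the self-dual locus is exactly $G$.

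First I would check that $\vartheta$ is compatible with every structure entering \eqref{eq:morita-reduced}. The differential of $\theta$ at the identity is $X\mapsto -X^{\mathsf T}$, which is an isometric automorphism of $(\mathfrak{sl}(n,\mathbb{R}),\mathrm{tr})$; hence $\theta^{*}\eta=\eta$ and the Cartan-Dirac structure on $\widetilde{\mathrm{PSL}(n)}$ is $\theta$-invariant. Combining $\widetilde{\mathcal{D}_q'}\circ\vartheta=\theta\circ\widetilde{\mathcal{D}_q'}$ with the $\theta$-invariance of the contraction formulas recorded in Theorem~\ref{thm:varpiD}, one obtains $\vartheta^{*}\varpi_{\mathcal{D}}=\varpi_{\mathcal{D}}$, so $\varpi_{\mathcal{D}}$ restricts to a $2$-form on $\mathcal{D}_n^{G}(\mathbf{C})$ satisfying the analogous properties with the $G$-action in place of the $\mathrm{PSL}(n)$-action. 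The same invariance shows the descended multiplicative forms $\tilde\omega_1$ and $\tilde\omega_2$ are $\vartheta$-invariant and restrict to the two groupoid corners. Since all the structure maps, together with $\mathcal{D}_p$ and $\widetilde{\mathcal{D}_q}$, intertwine $\vartheta$ with the induced base involutions, the fixed loci form sub-quasi-symplectic-groupoids and a sub-bimodule.

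The substance of the proof is that restricting \eqref{eq:morita-reduced} to these fixed loci is again a Morita equivalence. Here I would verify, in order: (i) that $\mathcal{D}_n^{G}(\mathbf{C})$, $\mathcal{R}_n^{G}(\mathbf{C})$ and the fixed locus of the source groupoid are smooth submanifolds carrying free and proper $G$-actions, using that $\vartheta$ is a proper involution together with the freeness and properness already in force for the $\mathrm{PSL}(n)$ reduction (Proposition~\ref{Proposition 3.2}); (ii) that the restricted legs $\mathcal{D}_p\colon\mathcal{D}_n^{G}(\mathbf{C})\to\mathcal{R}_n^{G}(\mathbf{C})$ and $\widetilde{\mathcal{D}_q}\colon\mathcal{D}_n^{G}(\mathbf{C})\to Y_n(\mathbf{C})$ remain surjective submersions with the correct fibers --- the first being the $\theta$-fixed reduction of the principal $\mathrm{PSL}(n)$-bundle $\mathcal{D}_p$, hence a principal $G$-bundle, and surjectivity of the monodromy leg onto the self-dual part of $Y_n(\mathbf{C})$ following from a $\vartheta$-equivariant refinement of Proposition~\ref{Proposition 2.2}, where the local-section equation $\xi(x+1)=\eta+\mathrm{Ad}_h\,\xi(x)$ is solved inside the $\theta$-fixed subalgebra $\mathfrak{g}=\mathrm{Lie}(G)$; and (iii) that the restricted $2$-forms retain the nondegeneracy demanded of a quasi-symplectic bimodule.

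I expect step (iii) to be the main obstacle: one must show that the fixed locus is \emph{symplectically clean}, meaning that the kernel of $\varpi_{\mathcal{D}}|_{\mathcal{D}_n^{G}(\mathbf{C})}$ is spanned exactly by the $G$-orbit directions, and that $\tilde\omega_1,\tilde\omega_2$ stay nondegenerate transverse to the groupoid fibers. The identity $\vartheta^{*}\varpi_{\mathcal{D}}=\varpi_{\mathcal{D}}$ splits the tangent bundle into $\varpi_{\mathcal{D}}$-orthogonal $\pm1$-eigenbundles, with $T\mathcal{D}_n^{G}(\mathbf{C})$ the $+1$-eigenbundle; the point is to confirm that the minimal-degeneracy properties of Theorem~\ref{thm:varpiD} are inherited by this eigenbundle, equivalently that the orbit directions paired nondegenerately transverse to the Drinfeld-Sokolov level set survive intersection with $\mathfrak{g}$. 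Granting this, the reduced triple $\bigl((\mathcal{D}_n^{G}\times_{Y_n(\mathbf{C})}\mathcal{D}_n^{G})/G,\tilde\omega_1\bigr)$, $(\mathcal{D}_n^{G},\varpi_{\mathcal{D}})$, $(G\ltimes Y_n(\mathbf{C}),\tilde\omega_2)$ satisfies the Morita axioms, which is \eqref{eq:morita-G}. As an independent check, the same result follows by repeating the reduction of this section verbatim with $\mathrm{PSL}(n)$ replaced by $\mathrm{PSp}(n)$ or $\mathrm{PSO}(n)$: the big Morita equivalence \eqref{eq:morita-big} is stated for an arbitrary connected group, and Theorem~\ref{thm:DS} has a standard analogue for these classical groups whose principal-nilpotent slice is $\mathcal{R}_n^{G}(\mathbf{C})$.
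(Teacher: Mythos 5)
Your proposal is correct and follows essentially the same route as the paper: the $\mathrm{PSL}(n)$ case is read off from \eqref{eq:morita-reduced}, and the remaining groups are obtained by restricting all objects to the $G$-submanifold $\mathcal{D}_n^G(\mathbf{C})\subset\mathcal{D}_n(\mathbf{C})$ (which is exactly your fixed locus of the duality involution $\vartheta$). The paper's proof is a one-line statement of this restriction, so your discussion of $\theta$-twisted equivariance and the nondegeneracy of the restricted forms supplies verifications the paper leaves implicit rather than diverging from it.
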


\begin{proof}
The case $G=\mathrm{PSL}(n)$ follows from \eqref{eq:morita-reduced}. For general $G$, restrict all objects to the $G$-submanifold $\mathcal{D}^G_n(\mathbf{C})\subset \mathcal{D}_n(\mathbf{C})$.
\end{proof}

\begin{Lemma}
There is a Lie groupoid morphism
\[
\begin{tikzcd}
\mathrm{Diff}_{\mathbb{Z}}(\widetilde{\mathbf{C}})\ltimes \mathcal{R}^G_n(\mathbf{C}) \arrow[r, "\phi"] \arrow[d, shift right] \arrow[d, shift left] & (\mathcal{D}^G_n(\mathbf{C}) \times_{Y_n(\mathbf{C})}\mathcal{D}^G_n(\mathbf{C})) / G \arrow[d, shift left] \arrow[d, shift right] \\
\mathcal{R}^G_n(\mathbf{C}) \arrow[r, "\mathrm{Id}"] & \mathcal{R}^G_n(\mathbf{C})
\end{tikzcd}
\]
mapping $(F,L)$ to $(\gamma, F\cdot \gamma)$, where $\gamma$ is a quasi-periodic non-degenerate curve corresponding to $L$.
\end{Lemma}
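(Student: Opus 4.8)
The plan is to realize $\phi$ by descending an explicit lift from the total space of the principal bundle $\mathcal{D}_p$. First I would define
\[
\tilde{\phi}\colon \mathrm{Diff}_{\mathbb{Z}}(\tilde{\mathbf{C}})\times \mathcal{D}^G_n(\mathbf{C}) \longrightarrow \mathcal{D}^G_n(\mathbf{C})\times_{Y_n(\mathbf{C})}\mathcal{D}^G_n(\mathbf{C}),
\qquad \tilde{\phi}(F,\gamma)=(\gamma,\,F\cdot\gamma),
\]
which is manifestly smooth, being built from the $\mathrm{Diff}_{\mathbb{Z}}(\tilde{\mathbf{C}})$-action. Composing with the quotient $\mathcal{D}^G_n(\mathbf{C})\times_{Y_n(\mathbf{C})}\mathcal{D}^G_n(\mathbf{C}) \to (\mathcal{D}^G_n(\mathbf{C})\times_{Y_n(\mathbf{C})}\mathcal{D}^G_n(\mathbf{C}))/G$, and using that $(F,\gamma)\mapsto(F,\mathcal{D}_p(\gamma))$ exhibits the arrow space $\mathrm{Diff}_{\mathbb{Z}}(\tilde{\mathbf{C}})\ltimes\mathcal{R}^G_n(\mathbf{C})$ as the $G$-quotient of $\mathrm{Diff}_{\mathbb{Z}}(\tilde{\mathbf{C}})\times \mathcal{D}^G_n(\mathbf{C})$ (with $G$ acting only on the curve factor), I would define $\phi$ as the induced map on quotients.

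Before that I must check that $\tilde{\phi}$ actually lands in the fiber product. The monodromy is preserved because $F$ commutes with $\kappa$: from $\gamma(\kappa x)=h\cdot\gamma(x)$ one gets $(F\cdot\gamma)(\kappa x)=\gamma(F^{-1}\kappa x)=\gamma(\kappa F^{-1}x)=h\cdot(F\cdot\gamma)(x)$, so $\gamma$ and $F\cdot\gamma$ share the monodromy $h$. To see that they moreover agree in $Y_n(\mathbf{C})$, I would use that $\mathrm{Diff}_{\mathbb{Z}}(\tilde{\mathbf{C}})$, being the universal cover of the connected group $\mathrm{Diff}_+(\mathbf{C})$, is connected: a path $F_t$ from $\mathrm{Id}$ to $F$ yields a homotopy $t\mapsto F_t\cdot\gamma$ through non-degenerate curves (reparametrization preserves non-degeneracy) all of monodromy $h$, so $\gamma$ and $F\cdot\gamma$ have the same image under $\widetilde{\mathcal{D}_q}$. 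Descent of $\tilde{\phi}$ then follows from the commutativity of the $\mathrm{Diff}_{\mathbb{Z}}(\tilde{\mathbf{C}})$- and $G$-actions, $F\cdot(g\cdot\gamma)=g\cdot(F\cdot\gamma)$, which gives $\tilde{\phi}(F,g\cdot\gamma)=g\cdot\tilde{\phi}(F,\gamma)$ and hence the well-defined $G$-invariant value $[\gamma,F\cdot\gamma]$; smoothness of $\phi$ is inherited because $\mathcal{D}_p$ is a principal $G$-bundle and the quotient maps are submersions.

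It then remains to verify the groupoid axioms. Compatibility with source and target is immediate: $s[\gamma,F\cdot\gamma]=\mathcal{D}_p(\gamma)=L=s(F,L)$, while $t[\gamma,F\cdot\gamma]=\mathcal{D}_p(F\cdot\gamma)=F\cdot\mathcal{D}_p(\gamma)=F\cdot L=t(F,L)$ by the $\mathrm{Diff}_{\mathbb{Z}}(\tilde{\mathbf{C}})$-equivariance of $\mathcal{D}_p$. Units and inverses are equally direct, since $\phi(\mathrm{Id},L)=[\gamma,\gamma]$ and $\phi(F^{-1},F\cdot L)=[F\cdot\gamma,\gamma]$. For multiplicativity I would take composable arrows $(F_2,L_2)$ and $(F_1,L_1)$ with $L_1=F_2\cdot L_2$, write $\phi(F_2,L_2)=[\gamma_2,F_2\cdot\gamma_2]$, and use the $G$-freedom to choose the representative of $\phi(F_1,L_1)$ with first entry $F_2\cdot\gamma_2$; commutativity then gives $\phi(F_1,L_1)=[F_2\cdot\gamma_2,\,F_1\cdot(F_2\cdot\gamma_2)]$, so the composite is $[\gamma_2,(F_1F_2)\cdot\gamma_2]=\phi(F_1F_2,L_2)$, as required.

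The main obstacle is the verification that $\tilde{\phi}$ takes values in the fiber product \emph{over $Y_n(\mathbf{C})$} rather than merely over $G$: one must confirm not only that reparametrization preserves the monodromy, but that $\gamma$ and $F\cdot\gamma$ lie in the same monodromy-preserving homotopy class, which is where connectedness of $\mathrm{Diff}_{\mathbb{Z}}(\tilde{\mathbf{C}})$ and the openness of the non-degeneracy condition enter. Once this is secured, the remaining axioms reduce to the commutativity of the two actions and the equivariance of $\mathcal{D}_p$ already established.
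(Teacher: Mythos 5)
Your proposal is correct and follows essentially the same route as the paper: define the lift $\tilde{\phi}(F,\gamma)=(\gamma,F\cdot\gamma)$ on $\mathrm{Diff}_{\mathbb{Z}}(\tilde{\mathbf{C}})\ltimes\mathcal{D}^G_n(\mathbf{C})$ and descend by $G$-equivariance. The paper's proof is terser and leaves implicit the points you spell out (that $(\gamma,F\cdot\gamma)$ lies in the fiber product over $Y_n(\mathbf{C})$ via connectedness of $\mathrm{Diff}_{\mathbb{Z}}(\tilde{\mathbf{C}})$ and commutation with $\kappa$, and the multiplicativity check), so your added detail is a welcome but not divergent elaboration.
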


\begin{proof}
Consider
\begin{equation}\label{eq:phi-tilde}
\begin{tikzcd}
\mathrm{Diff}_{\mathbb{Z}}(\widetilde{\mathbf{C}})\ltimes \mathcal{D}^G_n(\mathbf{C}) \arrow[r, "\tilde{\phi}"] \arrow[d, shift right] \arrow[d, shift left] & \mathcal{D}^G_n(\mathbf{C}) \times_{Y_n(\mathbf{C})} \mathcal{D}^G_n(\mathbf{C}) \arrow[d, shift left] \arrow[d, shift right] \\
\mathcal{D}^G_n(\mathbf{C}) \arrow[r, "\mathrm{Id}"] & \mathcal{D}^G_n(\mathbf{C})
\end{tikzcd}
\end{equation}
with $\tilde{\phi}(F,\gamma)=(\gamma, F\cdot \gamma)$. The diagram \eqref{eq:phi-tilde} is $G$-equivariant and descends to
\[
\begin{tikzcd}
\mathrm{Diff}_{\mathbb{Z}}(\widetilde{\mathbf{C}})\ltimes \mathcal{R}^G_n(\mathbf{C}) \arrow[d, shift right] \arrow[d, shift left] \arrow[r, "\phi"] 
& (\mathcal{D}^G_n(\mathbf{C}) \times_{Y_n(\mathbf{C})} \mathcal{D}^G_n(\mathbf{C}))/G \arrow[d, shift left] \arrow[d, shift right] \\
\mathcal{R}^G_n(\mathbf{C}) \arrow[r, "\mathrm{Id}"] & \mathcal{R}^G_n(\mathbf{C})
\end{tikzcd}.
\]
\end{proof}
\begin{appendix}
\section{Reduction of symplectic Lie groupoids by pair Lie 2-group}
In this section, we discuss the reduction of symplectic groupoids under Hamiltonian actions of pair Lie 2-groups. Through this process, we obtain new symplectic groupoids.

\subsection{Quotient of Lie groupoids by Lie 2-groups}

\begin{Definition}[\cite{herrera2023isometric}, Section~2.2]
A \emph{Lie 2-group} is a group internal to the category of Lie groupoids. Equivalently, it is a Lie groupoid \( H_1 \rightrightarrows H_0 \) such that \( H_1 \) and \( H_0 \) are Lie groups and all structure maps of \( H_1 \rightrightarrows H_0 \) are Lie group homomorphisms.
\end{Definition}

\begin{Example}
Let \( H \) be a Lie group.
\begin{enumerate}
\item The Lie groupoid \( H \rightrightarrows H \) with source, target, and unit maps the identity is a Lie 2-group.
\item The pair groupoid \( H \times H \rightrightarrows H \) is a Lie 2-group.
\end{enumerate}
\end{Example}

\begin{Definition}[\cite{herrera2023isometric}, Section~2.2]
A \emph{Lie 2-algebra} is a Lie algebra internal to the category of Lie groupoids: a Lie groupoid \( \mathfrak{h}_1 \rightrightarrows \mathfrak{h}_0 \) where \( \mathfrak{h}_1,\mathfrak{h}_0 \) are Lie algebras and all structure maps are Lie algebra homomorphisms.
\end{Definition}

\begin{Remark}
If \( H_1 \rightrightarrows H_0 \) is a Lie 2-group, then by differentiation we obtain a Lie 2-algebra \( \mathrm{Lie}(H_1) \rightrightarrows \mathrm{Lie}(H_0) \).
\end{Remark}

\begin{Definition}[\cite{herrera2023isometric}, Section~3]
A \emph{left 2-action} of a Lie 2-group \( H_1 \rightrightarrows H_0 \) on a Lie groupoid \( \mathcal{G} \rightrightarrows M \) is a Lie groupoid homomorphism
\[
\mathcal{A}=(\mathcal{A}^1,\mathcal{A}^0):\ (H_1\times \mathcal{G}\rightrightarrows H_0\times M)\longrightarrow (\mathcal{G}\rightrightarrows M),
\]
from the product groupoid, such that \( \mathcal{A}^1 \) and \( \mathcal{A}^0 \) are usual left Lie group actions. We call the 2-action \emph{free} (resp.\ \emph{proper}) if both \( \mathcal{A}^1 \) and \( \mathcal{A}^0 \) are free (resp.\ proper).
\end{Definition}

\begin{Notation}
For a Lie 2-group \( H_1 \rightrightarrows H_0 \), write \( s_H,t_H,u_H \) for source, target, and unit. For a Lie groupoid \( \mathcal{G}\rightrightarrows M \), write \( s_{\mathcal{G}},t_{\mathcal{G}},u_{\mathcal{G}} \).
\end{Notation}

\begin{Remark}[\cite{herrera2023isometric}, Section~3]\label{3.2}
Given a left 2-action of \( H_1 \rightrightarrows H_0 \) on \( \mathcal{G} \rightrightarrows M \), we have:
\begin{enumerate}
\item \( s_{\mathcal{G}}(h_1\cdot g)= s_H(h_1)\cdot s_{\mathcal{G}}(g). \)
\item \( t_{\mathcal{G}}(h_1\cdot g)= t_H(h_1)\cdot t_{\mathcal{G}}(g). \)
\item \( u_{\mathcal{G}}(h_0\cdot m)= u_H(h_0)\cdot u_{\mathcal{G}}(m). \)
\end{enumerate}
for all \( h_1\in H_1,\,h_0\in H_0,\,g\in \mathcal{G},\,m\in M \).
\end{Remark}

\begin{Proposition}\label{Proposition 3.1}
If a Lie 2-group \( H_1 \rightrightarrows H_0 \) acts freely and properly on a Lie groupoid \( \mathcal{G} \rightrightarrows M \) by a left 2-action, then the quotient \( \mathcal{G}/H_1 \rightrightarrows M/H_0 \) is a Lie groupoid.
\end{Proposition}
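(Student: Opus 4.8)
The plan is to build the two quotient manifolds first and then descend the structure maps one by one, the descent of the multiplication being the crux. Since the $2$-action is free and proper, the ordinary group actions $\mathcal{A}^1$ of $H_1$ on $\mathcal{G}$ and $\mathcal{A}^0$ of $H_0$ on $M$ are free and proper. By the quotient manifold theorem, $\mathcal{G}/H_1$ and $M/H_0$ are smooth Hausdorff manifolds and the projections $q_{\mathcal{G}}\colon \mathcal{G}\to\mathcal{G}/H_1$, $q_M\colon M\to M/H_0$ are principal bundles, in particular surjective submersions.

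Next I would descend the source, target, unit, and inversion. The relations in Remark~\ref{3.2} give $s_{\mathcal{G}}(h\cdot g)=s_H(h)\cdot s_{\mathcal{G}}(g)$ and $t_{\mathcal{G}}(h\cdot g)=t_H(h)\cdot t_{\mathcal{G}}(g)$; since $s_H(h),t_H(h)\in H_0$, the compositions $q_M\circ s_{\mathcal{G}}$ and $q_M\circ t_{\mathcal{G}}$ are $H_1$-invariant and therefore factor through unique maps $\bar{s},\bar{t}\colon \mathcal{G}/H_1\to M/H_0$. Because $q_{\mathcal{G}}$ is a surjective submersion while $q_M\circ s_{\mathcal{G}}$ is a submersion, $\bar{s}$ and $\bar{t}$ are submersions. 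The same mechanism applied to $u_{\mathcal{G}}(h_0\cdot m)=u_H(h_0)\cdot u_{\mathcal{G}}(m)$ and to the groupoid inversion $i_{\mathcal{G}}$ (which the groupoid homomorphism $\mathcal{A}^1$ preserves) yields a descended unit $\bar{u}\colon M/H_0\to\mathcal{G}/H_1$ and inversion $\bar{\imath}\colon\mathcal{G}/H_1\to\mathcal{G}/H_1$; smoothness in each case is automatic because $q_{\mathcal{G}}$ is a surjective submersion.

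The main work is to descend the multiplication. Write $\mathcal{G}^{(2)}=\mathcal{G}\times_{s_{\mathcal{G}},t_{\mathcal{G}}}\mathcal{G}$ for the manifold of composable pairs and let $m_{\mathcal{G}}$ denote the groupoid product. Set $\widetilde{H}=H_1\times_{s_H,t_H}H_1$; because the structure maps of the Lie $2$-group are group homomorphisms, $\widetilde{H}$ is a closed Lie subgroup of $H_1\times H_1$ and the groupoid composition $\bullet\colon \widetilde{H}\to H_1$ is a group homomorphism. The subgroup $\widetilde{H}$ acts on $\mathcal{G}^{(2)}$ by $(h,k)\cdot(g,g')=(h\cdot g,k\cdot g')$; Remark~\ref{3.2} together with freeness of the $H_0$-action shows this preserves composability, and the action is free and proper as the restriction of the free proper $H_1\times H_1$-action on $\mathcal{G}\times\mathcal{G}$ to a closed invariant submanifold. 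The key identity, which comes precisely from $\mathcal{A}^1$ being a groupoid homomorphism, is
\[
m_{\mathcal{G}}(h\cdot g,\,k\cdot g')=(h\bullet k)\cdot m_{\mathcal{G}}(g,g'),
\]
so that $m_{\mathcal{G}}$ is equivariant along the homomorphism $\bullet\colon\widetilde{H}\to H_1$ and $q_{\mathcal{G}}\circ m_{\mathcal{G}}$ is $\widetilde{H}$-invariant. It then remains to identify the orbit space: the assignment $(g,g')\mapsto([g],[g'])$ defines a $\widetilde{H}$-invariant map $\mathcal{G}^{(2)}\to(\mathcal{G}/H_1)^{(2)}$ into the composable pairs downstairs, and I would show it is a surjective submersion whose fibers are exactly the $\widetilde{H}$-orbits. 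Surjectivity and local sections are produced by lifting a pair $([g_1],[g_2])$ with $\bar{s}[g_1]=\bar{t}[g_2]$ to composable representatives: choose local sections of $q_{\mathcal{G}}$, use the smooth translation map of the free proper $H_0$-action to obtain the unique $a\in H_0$ with $s_{\mathcal{G}}(g_1)=a\cdot t_{\mathcal{G}}(g_2)$, and lift $a$ through the submersion $t_H$ to correct the second factor; injectivity modulo $\widetilde{H}$ follows because freeness of the $H_0$-action forces the two correcting elements to satisfy the defining constraint of $\widetilde{H}$. Hence $\mathcal{G}^{(2)}/\widetilde{H}\cong(\mathcal{G}/H_1)^{(2)}$ and $q_{\mathcal{G}}\circ m_{\mathcal{G}}$ descends to a smooth multiplication $\bar{m}\colon(\mathcal{G}/H_1)^{(2)}\to\mathcal{G}/H_1$.

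Finally, all groupoid axioms (associativity, the unit laws, and the inverse laws) hold upstairs and are equalities between maps intertwined by the surjective submersions $q_{\mathcal{G}}$ and $q_M$; since these projections are surjective, the corresponding identities descend, so $(\bar{s},\bar{t},\bar{u},\bar{m},\bar{\imath})$ make $\mathcal{G}/H_1\rightrightarrows M/H_0$ a Lie groupoid. The step I expect to be the main obstacle is exactly the multiplication: both verifying the equivariance identity above—this is where the $2$-group structure (the homomorphism property of $\bullet$ and of $\mathcal{A}^1$) is indispensable—and establishing the diffeomorphism $\mathcal{G}^{(2)}/\widetilde{H}\cong(\mathcal{G}/H_1)^{(2)}$, so that $\bar{m}$ is genuinely smooth rather than merely well defined on points.
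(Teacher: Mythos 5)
Your proposal is correct and takes essentially the same route as the paper's proof: quotient manifold theorem for the free proper actions, then descent of each structure map using the equivariance relations of Remark~\ref{3.2} and the fact that the action is a groupoid homomorphism. The paper merely asserts that the multiplication and inversion are ``well defined and smooth,'' whereas you supply the one genuinely nontrivial detail --- the action of $H_1\times_{s_H,t_H}H_1$ on composable pairs, the equivariance of $m_{\mathcal{G}}$ along the composition homomorphism, and the identification $\mathcal{G}^{(2)}/\widetilde{H}\cong(\mathcal{G}/H_1)^{(2)}$ --- which is a correct and welcome elaboration of the same argument rather than a different one.
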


\begin{proof}
Freeness and properness imply \( \mathcal{G}/H_1 \) and \( M/H_0 \) are smooth manifolds. Define
\[
s([g])=[s_{\mathcal{G}}(g)],\qquad t([g])=[t_{\mathcal{G}}(g)].
\]
By Remark~\ref{3.2}, these are well-defined. Item~(3) implies that \( u_{\mathcal{G}}:M\to \mathcal{G} \) descends to a unit map \( u:M/H_0\to \mathcal{G}/H_1 \). Define multiplication and inverse by
\[
[g_1]\cdot [g_2]=[g_1g_2]\quad \text{whenever } s([g_1])=t([g_2]),\qquad [g]^{-1}=[g^{-1}].
\]
These maps are well defined and smooth, and the induced diagram

$$\begin{tikzcd} \mathcal{G} \arrow[r] \arrow[d, shift right] \arrow[d, shift left] & \mathcal{G}/H_1 \arrow[d, shift right] \arrow[d, shift left] \\ M \arrow[r] & M/H_0 \end{tikzcd}
$$
defines a homomorphism of Lie groupoids.
\end{proof}

\begin{Example}[\cite{blohmann2023hamiltonian}]
Let a Lie group \(G\) act on a manifold \(M\), and let \(H\subseteq G\) be a subgroup. Consider the action groupoid \( G\ltimes M \rightrightarrows M \) and the 2-action of the Lie 2-group \( H\times H \rightrightarrows H \) on \( G\ltimes M \rightrightarrows M \) given by
\[
(h_1,h_2)\cdot (g,m)=(h_1 g h_2^{-1},\,h_2\cdot m).
\]
Then \( (G\ltimes M)/(H\times H)\rightrightarrows M/H \) is a Lie groupoid.
\end{Example}

\subsection{Reduction of symplectic groupoids}
\begin{Definition}
Let \( (\mathcal{G},\omega)\rightrightarrows (M,\pi) \) be a symplectic groupoid.
A left $2$-action of the Lie $2$-group
\( H\times H \rightrightarrows H \)
on \( \mathcal{G}\rightrightarrows M \) is called \emph{Hamiltonian} if
\(H\times H\) acts on \( (\mathcal{G},\omega) \) with moment map
\( \mu_{\mathcal{G}} \), and \(H\) acts on \( (M,\pi) \) by Poisson diffeomorphisms
with moment map \( \mu_M \), such that the induced map
\begin{equation}\label{4.1}
(\mu_{\mathcal G}, \mu_M)\colon
\mathcal G \rightrightarrows M
\longrightarrow
\text{Lie}(H)^*\times \text{Lie}(H)^* \rightrightarrows \text{Lie}(H)^*
\end{equation}
is a morphism of Lie groupoids, i.e.\ it is compatible with source, target,
units, multiplication, and inversion.
\end{Definition}

\begin{Proposition}\label{Proposition 3.2}
Suppose a Lie 2-group \(H\times H \rightrightarrows H\) has a Hamiltonian action on a symplectic groupoid \((\mathcal{G}, \omega) \rightrightarrows (M, \pi)\). Let \(x \in \mathrm{Lie}(H)^*\) be a regular value of the moment map \(\mu_M\), and let \((x,x) \in \mathrm{Lie}(H)^*\times \mathrm{Lie}(H)^*\) be a regular value of the moment map \(\mu_{\mathcal{G}}\). Assume the stabilizers of \(x\) and \((x,x)\) are \(H\) and \(H\times H\), respectively. If the actions of \(H\times H\) on \(\mu_{\mathcal{G}}^{-1}(x,x)\) and of \(H\) on \(\mu_M^{-1}(x)\) are both free and proper, then the quotient
\[
(\mathcal{G}//(H\times H), \omega_{\mathrm{red}}) \rightrightarrows (M//H, \pi_{\mathrm{red}})
\]
is a symplectic groupoid.
\end{Proposition}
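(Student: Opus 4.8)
The plan is to exhibit the reduced object as the quotient, via Proposition~\ref{Proposition 3.1}, of a Lie subgroupoid of $\mathcal{G}\rightrightarrows M$ cut out by the moment-map level sets, and then to promote the Marsden--Weinstein reduced form to a multiplicative symplectic form. First I would identify the level sets as a Lie subgroupoid $\mu_{\mathcal{G}}^{-1}(x,x)\rightrightarrows\mu_M^{-1}(x)$. Regularity of $(x,x)$ and $x$ makes these smooth submanifolds, and the hypothesis that $(\mu_{\mathcal{G}},\mu_M)$ is a morphism into the pair groupoid $\mathrm{Lie}(H)^*\times\mathrm{Lie}(H)^*\rightrightarrows\mathrm{Lie}(H)^*$ is exactly what forces the subgroupoid structure: reading source and target in the pair groupoid gives $\mu_M\circ s_{\mathcal{G}}=\mathrm{pr}_2\circ\mu_{\mathcal{G}}$ and $\mu_M\circ t_{\mathcal{G}}=\mathrm{pr}_1\circ\mu_{\mathcal{G}}$, so $s_{\mathcal{G}}$ and $t_{\mathcal{G}}$ carry $\mu_{\mathcal{G}}^{-1}(x,x)$ into $\mu_M^{-1}(x)$; compatibility with units sends $\mu_M^{-1}(x)$ back into $\mu_{\mathcal{G}}^{-1}(x,x)$ since the unit at $x$ in the pair groupoid is $(x,x)$; and compatibility with multiplication and inversion keeps us on the level set because $(x,x)(x,x)=(x,x)$ and $(x,x)^{-1}=(x,x)$. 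The regular-value hypotheses ensure the restricted source and target remain submersions, so this is genuinely a Lie subgroupoid.

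Next I would restrict the 2-action and reduce. Equivariance of the moment maps together with the hypothesis that the coadjoint stabilizers of $(x,x)$ and of $x$ are the full groups $H\times H$ and $H$ shows that the $(H\times H)$- and $H$-actions preserve the respective level sets; thus the original 2-action restricts to a free and proper 2-action of $H\times H\rightrightarrows H$ on $\mu_{\mathcal{G}}^{-1}(x,x)\rightrightarrows\mu_M^{-1}(x)$. Proposition~\ref{Proposition 3.1} then yields the Lie groupoid
\[
\mathcal{G}//(H\times H)=\mu_{\mathcal{G}}^{-1}(x,x)/(H\times H)\rightrightarrows \mu_M^{-1}(x)/H=M//H.
\]
Marsden--Weinstein reduction at the coadjoint fixed point $(x,x)$ simultaneously produces the symplectic form $\omega_{\mathrm{red}}$, characterized by $q^*\omega_{\mathrm{red}}=\iota^*\omega$ with $\iota\colon\mu_{\mathcal{G}}^{-1}(x,x)\hookrightarrow\mathcal{G}$ the inclusion and $q$ the quotient map; this gives closedness and nondegeneracy at once.

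It remains to check multiplicativity of $\omega_{\mathrm{red}}$ and to match the base Poisson structure. For multiplicativity I would restrict the identity $m_{\mathcal{G}}^*\omega=\mathrm{pr}_1^*\omega+\mathrm{pr}_2^*\omega$ to composable pairs lying in the subgroupoid and then descend it. The composable pairs of the reduced groupoid are the quotient of those of the subgroupoid by the composable pairs of the pair 2-group, namely $H\times H\times H$ acting equivariantly for the multiplication; since this quotient map is a surjective submersion, pulling the desired reduced identity back along it reproduces the restricted upstairs identity, and injectivity of pullback along a surjective submersion gives multiplicativity of $\omega_{\mathrm{red}}$. Finally, the Poisson structure that $(\mathcal{G}//(H\times H),\omega_{\mathrm{red}})$ induces on its base is the unique one making the reduced target a Poisson map; since $t_{\mathcal{G}}\colon(\mathcal{G},\omega)\to(M,\pi)$ is Poisson and reduction commutes with it, this induced structure is $\pi_{\mathrm{red}}$, so the reduced object is a symplectic groupoid over $(M//H,\pi_{\mathrm{red}})$.

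I expect the main obstacle to be the descent of multiplicativity and the accompanying identification of the base Poisson structure: one must verify that the composable-pair spaces behave correctly under the quotient so that the surjective-submersion argument applies, and that the Marsden--Weinstein form and the Poisson-reduced base structure are compatible in the precise sense required of a symplectic groupoid. By contrast, the restriction-to-a-subgroupoid step---though it is conceptually the reason the groupoid-morphism hypothesis on the moment maps is imposed---is comparatively routine once submersivity of the restricted structure maps is confirmed.
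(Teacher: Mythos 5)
Your proposal is correct and follows the same skeleton as the paper's proof: the level sets form a Lie subgroupoid because $(x,x)\rightrightarrows x$ is a unit subgroupoid of the pair groupoid $\mathrm{Lie}(H)^*\times\mathrm{Lie}(H)^*\rightrightarrows\mathrm{Lie}(H)^*$ and $(\mu_{\mathcal{G}},\mu_M)$ is a groupoid morphism; Proposition~\ref{Proposition 3.1} then gives the quotient Lie groupoid; Marsden--Weinstein reduction gives $\omega_{\mathrm{red}}$; and multiplicativity descends (the paper simply asserts this is ``inherited,'' while you spell out the surjective-submersion argument on composable pairs). The one place where you genuinely diverge is the identification of the base Poisson structure. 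You invoke the uniqueness of the Poisson structure on the base of a symplectic groupoid making the target map Poisson, and argue that reduction of $t_{\mathcal{G}}$ is compatible with Poisson reduction of $(M,\pi)$; making this precise requires observing that $t_{\mathcal{G}}^*$ carries $H$-invariant functions to $(H\times H)$-invariant functions, which is exactly the compatibility recorded in Remark~\ref{3.2}. The paper instead works infinitesimally: it uses the canonical Lie algebroid isomorphism $\rho_\omega\colon A(\mathcal{G})\to T^*M$ determined by $\omega$, and shows via a commutative diagram that $A(\mathcal{G}//(H\times H))$ is the reduction of $A(\mathcal{G})$, so that $\rho_{\omega_{\mathrm{red}}}$ identifies it with $T^*M//H\cong T^*(M//H)$ carrying the reduced Poisson structure. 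Your route is more elementary and avoids the algebroid machinery, at the cost of having to justify directly that the reduced target is Poisson for $\pi_{\mathrm{red}}$; the paper's route is more structural and makes the compatibility of algebroid reduction with cotangent reduction explicit. Both are valid, and I see no gap in your argument.
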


\begin{proof}
Since \((x,x) \rightrightarrows x\) is a (unit) Lie groupoid and \eqref{4.1} is a groupoid morphism, the level sets \( \mu_{\mathcal{G}}^{-1}(x,x) \rightrightarrows \mu_M^{-1}(x) \) form a Lie subgroupoid. By Proposition~\ref{Proposition 3.1}, the quotient \( \mathcal{G}//(H\times H) \rightrightarrows M//H \) is a Lie groupoid. By Marsden–Weinstein reduction, \( \omega_{\mathrm{red}} \) is a symplectic form on \( \mathcal{G}//(H\times H) \); multiplicativity is inherited from \( \omega \).

There is a canonical Lie algebroid isomorphism
\[
\rho_\omega: A(\mathcal{G}) \xrightarrow{\ \cong\ } T^*M,
\]
characterized by \( \langle \rho_\omega[\xi], v\rangle = \omega(\xi,v) \) for \( [\xi]\in \nu(\mathcal{G},M)=A(\mathcal{G}) \) and \( v\in TM \). Similarly define \( \rho_{\omega_{\mathrm{red}}} \). To see that the base Poisson structure equals \( \pi_{\mathrm{red}} \), it suffices to show
\[
\rho_{\omega_{\mathrm{red}}}: A(\mathcal{G}//(H\times H))\xrightarrow{\ \cong\ } T^*(M//H)\cong T^*M // H
\]
is a Lie algebroid isomorphism. This follows from the commutative diagram
\[
\begin{tikzcd}
A(\mathcal{G}) \arrow[d, "\rho_\omega"'] & A(\mu_{\mathcal{G}}^{-1}(x,x)) \arrow[l, hook'] \arrow[d, "\cong"] \arrow[r] & A(\mathcal{G}//(H\times H)) \arrow[d, "\rho_{\omega_{\mathrm{red}}}"] \\
T^*M & \mu_{T^*M}^{-1}(x) \arrow[l, hook'] \arrow[r] & T^*(M//H)
\end{tikzcd}
\]
where \( \mu_{T^*M} \) is the moment map for the lifted \(H\)-action on \(T^*M\). In particular, \( A(\mathcal{G}//(H\times H)) \) is the reduction of \( A(\mathcal{G}) \), and the claim follows.
\end{proof}

\begin{Example}
Let \( (M,\omega) \) be a symplectic manifold with a Hamiltonian action of a Lie group \( G \) and moment map \( \mu_M:M\to \mathfrak{g}^* \) such that \(G\) acts freely and properly on \( \mu_M^{-1}(0) \). Consider \( \mathcal{G}=M\times M \rightrightarrows M \) with symplectic form \( (\omega,-\omega) \). The action of \( G\times G \) on \( M\times M \) is Hamiltonian with moment map \( \mu_{\mathcal{G}}=(\mu_M,\mu_M) \). The Lie 2-group \( G\times G \rightrightarrows G \) acts Hamiltonianly on \( \mathcal{G} \), and the reduction yields the symplectic groupoid
\[
(M_{\mathrm{red}}\times M_{\mathrm{red}},\,\omega_{\mathrm{red}},-\omega_{\mathrm{red}})\ \rightrightarrows\ (M_{\mathrm{red}},\omega_{\mathrm{red}}),
\]
where \( M_{\mathrm{red}}=\mu_M^{-1}(0)/G \).
\end{Example}
\begin{Example}[Pair groupoid reduction with rotations]
Let $M=\mathbb{R}^2$ with $\omega=dx\wedge dy$, and let $G=S^1$ act by rotations with moment map $\mu_M(x,y)=\tfrac12(x^2+y^2)$. 
Let $\mathcal{G}=M\times M \rightrightarrows M$ be the pair groupoid with multiplicative form $\Omega=pr_1^*\omega-pr_2^*\omega$, and let $G\times G \rightrightarrows G$ act by $(h_1,h_2)\cdot(m_1,m_2)=(h_1\!\cdot m_1,h_2\!\cdot m_2)$. 
Then the 2-action is Hamiltonian with $\mu_{\mathcal{G}}=(\mu_M,\mu_M)$. For $x=r^2/2>0$, freeness/properness hold on $\mu_M^{-1}(x)\cong S^1$, and the reduced symplectic groupoid is the unit groupoid of a point:
\[
(\mathcal{G}//(G\times G),\Omega_{\mathrm{red}})\ \rightrightarrows\ (M//G,\pi_{\mathrm{red}})
\ \cong\ (\{\ast\},0)\ \rightrightarrows\ \{\ast\}.
\]
\end{Example}

\begin{Example}[Action groupoid and conjugation]
Let a Lie group $G$ act on itself by conjugation, and consider $\mathcal{G}=G\ltimes G \rightrightarrows G$. 
The Lie 2-group $G\times G \rightrightarrows G$ acts by $(h_1,h_2)\cdot(g,x)=(h_1 g h_2^{-1},\,h_2 x h_2^{-1})$. 
On open sets where the action is free/proper and for a regular conjugacy class $\mathcal{C}\subset G$, the reduction of $\mathcal{G}$ at $\mathcal{C}$ yields a finite-dimensional symplectic groupoid integrating the induced Poisson structure on a neighborhood of $[\mathcal{C}]$ in $G//G$. 
This provides a group-valued (Cartan–Dirac) counterpart of the pair-groupoid reduction.
\end{Example}
\end{appendix}
\nocite{*}
\bibliographystyle{plain}
\bibliography{Sources}
\end{document}